\newcommand{\im}{{\operatorname{im}}}
\newcommand{\cok}{{\operatorname{cok}}}
\newcommand{\End}{\operatorname{End}}
\newcommand{\Aut}{\operatorname{Aut}}
\newcommand{\GL}{\operatorname{GL}}
\newcommand{\rank}{\operatorname{rank}}
\newcommand{\diag}{\operatorname{diag}}
\newcommand{\Sur}{\operatorname{Sur}}
\newcommand{\Prs}{\operatorname{Prs}}
\newcommand{\M}{\operatorname{M}}
\newcommand{\Z}{\mathbb{Z}}
\newcommand{\extz}{\overline{\Z}}
\newcommand{\C}{\mathbb{C}}
\newcommand{\F}{\mathbb{F}}
\newcommand{\PP}{\mathbb{P}}
\newcommand{\EE}{\mathbb{E}}
\newcommand{\GG}{\mathcal{G}}
\newcommand{\calA}{\mathcal{A}}
\newcommand{\calB}{\mathcal{B}}
\newcommand{\calC}{\mathcal{C}}
\newcommand{\calM}{\mathcal{M}}
\newcommand{\calP}{\mathcal{P}}
\newcommand\undermat[2]{
  \makebox[0pt][l]{$\smash{\underbrace{\phantom{%
    \begin{matrix}#2\end{matrix}}}_{\text{$#1$}}}$}#2}
\newcommand{\tred}{\textcolor{red}}
\theoremstyle{definition}
\newtheorem{theorem}{Theorem}[section]
\newtheorem{proposition}[theorem]{Proposition}
\newtheorem{problem}[theorem]{Problem}
\newtheorem{corollary}[theorem]{Corollary}
\newtheorem{lemma}[theorem]{Lemma}
\newtheorem{conjecture}[theorem]{Conjecture}
\newtheorem*{conjecture*}{Conjecture}
\newtheorem{remark}[theorem]{Remark}
\newtheorem{definition}[theorem]{Definition}
\newtheorem{example}[theorem]{Example}
\newcommand\semilarge{\@setfontsize\semilarge{11}{13.2}}
\title{\semilarge{\textbf{JOINT DISTRIBUTION OF THE COKERNELS OF RANDOM $p$-ADIC MATRICES II}} \vspace{-2mm}}
\author{\normalsize{JIWAN JUNG, JUNGIN LEE} }
\date{}
\newcommand\shorttitle{JOINT DISTRIBUTION OF RANDOM $p$-ADIC MATRICES II}
\newcommand\authors{JIWAN JUNG, JUNGIN LEE}
\ifodd\value{page}
\authors
\shorttitle
\renewcommand{\@seccntformat}[1]{\csname the#1\endcsname.\quad}
\renewenvironment{abstract}
 {\quotation\small\noindent\rule{\linewidth}{.5pt}\par\smallskip
  {\centering\bfseries\abstractname\par}\medskip}
 {\par\noindent\rule{\linewidth}{.5pt}\endquotation}
\begin{document}
\maketitle
\vspace{-18mm}

\begin{abstract}
In this paper, we study the combinatorial relations between the cokernels $\text{cok}(A_n+px_iI_n)$ ($1 \le i \le m$) where $A_n$ is an $n \times n$ matrix over the ring of $p$-adic integers $\mathbb{Z}_p$, $I_n$ is the $n \times n$ identity matrix and $x_1, \cdots, x_m$ are elements of $ \mathbb{Z}_p$ whose reductions modulo $p$ are distinct. For a positive integer $m \le 4$ and given $x_1, \cdots, x_m \in \mathbb{Z}_p$, we determine the set of $m$-tuples of finitely generated $\mathbb{Z}_p$-modules $(H_1, \cdots, H_m)$ for which $(\text{cok}(A_n+px_1I_n), \cdots, \text{cok}(A_n+px_mI_n)) = (H_1, \cdots, H_m)$ for some matrix $A_n$. We also prove that if $A_n$ is an $n \times n$ Haar random matrix over $\mathbb{Z}_p$ for each positive integer $n$, then the joint distribution of $\text{cok}(A_n+px_iI_n)$ ($1 \le i \le m$) converges as $n \rightarrow \infty$.

\end{abstract}

\section{Introduction} \label{Sec1}
\allowdisplaybreaks

Friedman and Washington \cite{FW89} computed the distribution of the cokernel of a random matrix over the ring of $p$-adic integers $\Z_p$. They proved that if $A_n \in \M_n(\Z_p)$ is a Haar random matrix (equidistributed with respect to the Haar measure) for each positive integer $n$ and $H$ is a finite abelian $p$-group, then
\begin{equation} \label{eq1a}
\lim_{n \rightarrow \infty} \PP (\cok (A_n) \cong H) = \frac{\prod_{k=1}^{\infty}(1-p^{-k})}{\left| \Aut(H) \right|}.
\end{equation}
Here $\M_{m \times n}(R)$ denotes the set of $m \times n$ matrices over a commutative ring $R$, $\M_n(R) := \M_{n \times n}(R)$ and $\PP( \, \cdot \, )$ denotes the probability of an event. The study of the distributions of the cokernels for much larger classes of random $p$-adic matrices was initiated by the work of Wood \cite{Woo17} which proved universality for random symmetric matrices over $\Z_p$. Precisely, Wood proved that if $A_n \in \M_n(\Z_p)$ is an $\varepsilon$-balanced random symmetric matrix for each positive integer $n$, then the distribution of $\cok(A_n)$ always converges to the same distribution as $n \rightarrow \infty$. 

\begin{definition} \label{def1a}
For a real number $0 < \varepsilon < 1$, a random variable $x \in \Z_p$ is $\varepsilon$\textit{-balanced} if $\PP (x \equiv r \,\, (\text{mod } p)) \le 1 - \varepsilon$ for every $r \in \Z / p\Z$. 
A random matrix $A \in \M_n(\Z_p)$ is $\varepsilon$\textit{-balanced} if its entries are independent and $\varepsilon$-balanced. A random symmetric matrix $A \in \M_n(\Z_p)$ is $\varepsilon$\textit{-balanced} if its upper triangular entries are independent and $\varepsilon$-balanced. 
\end{definition}

\begin{theorem} \label{thm1b}
(\cite[Corollary 9.2]{Woo17}) Let $0 < \varepsilon < 1$ be a real number, $H$ be a finite abelian $p$-group and $A_n \in \M_n(\Z_p)$ be an $\varepsilon$-balanced random symmetric matrix for each $n$. Then we have
    $$
    \lim_{n \rightarrow \infty} \PP(\cok(A_n) \cong H) = \frac{\# \left\{ \text{symmetric, bilinear, perfect } \phi : H \times H \rightarrow \C^* \right\}}{\left | H \right | \left | \Aut(H) \right |} \prod_{k=1}^{\infty}(1-p^{1-2k}).
    $$
\end{theorem}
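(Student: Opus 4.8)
The plan is the method of moments in the category of finite abelian $p$-groups. The structural point is that whenever the symmetric matrix $A_n$ is nonsingular over $\Q_p$ — which occurs with probability tending to $1$ as $n\to\infty$ under the $\varepsilon$-balanced hypothesis, so the complementary event is harmless — its cokernel $\cok(A_n)=\Z_p^n/A_n\Z_p^n$ carries a canonical \emph{perfect symmetric} bilinear pairing $\langle\bar v,\bar w\rangle = v^{T}A_n^{-1}w \bmod\Z_p$ valued in $\Q_p/\Z_p\hookrightarrow\C^*$ (well defined by the symmetry of $A_n$, perfect because $A_n^{-1}$ identifies $\cok(A_n)$ with its Pontryagin dual). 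This explains why perfect symmetric pairings appear in the answer and dictates the expected limit.

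The computational heart is to show that for every finite abelian $p$-group $G$ the $G$-moment $\EE\bigl[\#\Sur(\cok(A_n),G)\bigr]$ converges, as $n\to\infty$, to $\bigl|\bigwedge^2 G\bigr|$ (equivalently, to $\tfrac1{|G|}\#\{\text{symmetric bilinear }\phi\colon G\times G\to\C^*\}$), \emph{independently of the chosen $\varepsilon$-balanced distribution}. Since $\#\Sur(\cok(A_n),G)=\#\{F\in\Sur(\Z_p^n,G):FA_n=0\}$, the moment equals $\sum_F\PP(FA_n=0)$; writing $F$ through the images $F(e_1),\dots,F(e_n)\in G$, the condition $FA_n=0$ is a system of $G$-valued linear equations in the independent entries $(A_n)_{ij}$ ($i\le j$), coupled because each off-diagonal $(A_n)_{ij}$ occurs in both the $i$-th and $j$-th equation. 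One evaluates $\PP(FA_n=0)$ by discrete Fourier analysis on $G$: the trivial character gives the main term and the $\varepsilon$-balanced hypothesis forces the nontrivial ones to be negligible, uniformly; the gain of $\bigl|\bigwedge^2 G\bigr|$ over the value $1$ obtained for general matrices comes precisely from the symmetry relations $(A_nv)\cdot w=(A_nw)\cdot v$. Summing over $F$ and discarding the contribution of the ``non-robust'' surjections (those that are degenerate in Wood's sense) leaves the claimed limit.

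The main obstacle is exactly this character-sum estimate in the \emph{symmetric} setting. Because only the entries on and above the diagonal are independent, the columns $A_ne_j$ that one composes with $F$ are not independent, so the i.i.d. argument of Friedman--Washington and Wood must be redone: the diagonal entries are handled separately, and one must isolate the correct ``robustness'' condition so that the non-robust surjections contribute negligibly. It is in these non-concentration estimates — that the entries are not too close to lying in a fixed coset modulo $p$ — that the $\varepsilon$-balancedness is essential, and making all of this quantitative (with error terms small enough after summing over the roughly $|G|^n$ surjections $F$) is the technically demanding part.

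Finally, the moments $\bigl|\bigwedge^2 G\bigr|$ grow slowly enough (they are $|G|^{O(\log_p|G|)}$, well within the range covered by the moment-uniqueness results of Wood and of Sawin--Wood) that the distribution of $\cok(A_n)$ converges to the unique distribution on finite abelian $p$-groups with these moments. An inversion formula in the category of finite abelian $p$-groups then expresses $\PP(\cok(A_n)\cong H)$ in terms of the moments; evaluating the resulting Cohen--Lenstra-type sum — whose normalizing constant is $\prod_{k=1}^{\infty}(1-p^{1-2k})$, the reciprocal of $\sum_{H}\tfrac{\#\{\text{symmetric, bilinear, perfect }\phi\colon H\times H\to\C^*\}}{|H|\,|\Aut(H)|}$ — produces exactly the stated formula.
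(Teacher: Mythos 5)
The paper does not prove Theorem~\ref{thm1b}; it is quoted from Wood (\cite[Corollary 9.2]{Woo17}), with only a one-sentence description of the method (moments of $\cok(A_n)$ plus the moment-uniqueness result \cite[Theorem 8.3]{Woo17}). Your sketch correctly reproduces Wood's actual strategy — the canonical perfect symmetric pairing on $\cok(A_n)$ for nonsingular symmetric $A_n$, the computation of $\lim_n\EE\bigl[\#\Sur(\cok(A_n),G)\bigr]=|\wedge^2 G|$ via character sums and a robustness dichotomy, and the inversion from moments to the stated mass formula with normalizing constant $\prod_{k\ge1}(1-p^{1-2k})$ — so it matches the approach the paper alludes to.
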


One of the key ingredients of the proof of Theorem \ref{thm1b} is the use of moments for random finitely generated abelian groups. For a given finite abelian group $H$, the $H$-\textit{moment} of a random finitely generated abelian group $X$ is defined by the expected value $\EE(\# \Sur(X, H))$ of the number of surjective homomorphisms from $X$ to $H$. 
If the moments of a random finitely generated abelian group $X$ are not too large, then the distribution of $X$ is uniquely determined by its moments \cite[Theorem 8.3]{Woo17}. Theorem \ref{thm1b} follows from this result and a sophisticated computation of the moments of the cokernels of $\varepsilon$-balanced matrices.

Starting from the work of Wood, several universality results for the cokernels of random $p$-adic matrices were proved \cite{CY23, Lee23b, Lee24, NVP24, NW22a, NW22b, Woo19, Yan23}. 
All of these results were obtained by computing the (mixed) moments of the cokernels and determining the (joint) distribution of the cokernels from the moments.
As an example, we provide a theorem of Nguyen and Wood \cite{NW22a} which proves universality for $\varepsilon_n$-balanced matrices over $\Z_p$ where $\varepsilon_n$ does not decrease too fast as $n \rightarrow \infty$. 

\begin{theorem} \label{thm1c}
(\cite[Theorem 4.1]{NW22a}) Let $u \ge 0$ be an integer, $H$ be a finite abelian $p$-group and $(\varepsilon_n)_{n \ge 1}$ be a sequence of real numbers such that $0 < \varepsilon_n < 1$ for each $n$ and for every $\Delta > 0$, we have $\varepsilon_n \ge \frac{\Delta \log n}{n}$ for sufficiently large $n$. Let $A_n \in \M_{n \times (n+u)}(\Z_p)$ be an $\varepsilon_n$-balanced random matrix for each $n$. Then we have
\begin{equation} \label{eq1b}
\lim_{n \rightarrow \infty} \PP(\cok(A_n) \cong H) = \frac{\prod_{k=1}^{\infty} (1-p^{-k-u})}{\left| H \right|^{u} \left| \Aut(H) \right| }.
\end{equation}
\end{theorem}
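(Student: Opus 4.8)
The plan is to run the moment method, the strategy highlighted above for all the cited universality theorems. Concretely, I would first show that for every finite abelian $p$-group $H$ the $H$-moment of the cokernel converges,
\[
\lim_{n\to\infty}\EE(\#\Sur(\cok(A_n),H))=\frac{1}{|H|^{u}},
\]
and then invoke a moment-uniqueness theorem in the style of \cite[Theorem 8.3]{Woo17}. Since the numbers $|H|^{-u}$ are uniformly bounded, they grow slowly enough to determine a unique distribution on finite abelian $p$-groups with those moments; and the explicit measure on the right-hand side of \eqref{eq1b} is readily checked to have exactly these moments, either by a short generating-function identity or simply because it is the limiting cokernel distribution in the Haar case, which reduces to the computation of \cite{FW89}. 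Thus convergence of the moments forces $\cok(A_n)$ to converge in distribution to that measure.

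For the moment computation I would use the usual dictionary: a surjection $\cok(A_n)\twoheadrightarrow H$ is the same as a surjection $\phi\colon\Z_p^{\,n}\twoheadrightarrow H$ with $\phi\circ A_n=0$, i.e.\ with $\phi(a_j)=0$ for each column $a_j$ of $A_n$; so by linearity of expectation and independence of the $n+u$ columns,
\[
\EE(\#\Sur(\cok(A_n),H))=\sum_{\phi\in\Sur(\Z_p^{\,n},H)}\ \prod_{j=1}^{n+u}\PP(\phi(a_j)=0).
\]
For the overwhelming majority of $\phi$, the image of an $\varepsilon_n$-balanced column under $\phi$ is close to equidistributed on $H$ (in the Haar case it is exactly equidistributed), so each factor equals $|H|^{-1}(1+o(1))$; together with the elementary estimate $\#\Sur(\Z_p^{\,n},H)=|H|^{\,n}(1+o(1))$, the contribution of these ``generic'' $\phi$ is $|H|^{\,n}\cdot|H|^{-(n+u)}=|H|^{-u}$, the target value.

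The technical heart — and the only place where the hypothesis on $\varepsilon_n$ is used — is bounding the contribution of the ``non-generic'' $\phi$, namely those nearly factoring through a proper quotient $H/G$: after changing basis, all but a bounded number $s=O_H(1)$ of the images $\phi(e_i)$ lie in $G$, the remaining $s$ generating $H/G$. For such a $\phi$ and a column $a_j$, the event $\phi(a_j)=0$ factors into vanishing of a $G$-component (roughly $|G|^{-1}$-distributed, governed by the $\approx n$ coordinates landing in $G$) and of an $H/G$-component, an equation in $H/G$ in only the $s$ exceptional coordinates; $\varepsilon_n$-balancedness of those forces the latter probability to be at most $1-\varepsilon_n$. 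Thus $\PP(\phi A_n=0)$ gains a factor of order $(1-\varepsilon_n)^{\,n+u}$ over $|G|^{-(n+u)}$, whereas without that gain the number of such $\phi$ times their maximal $\PP(\phi A_n=0)$ would beat the target $|H|^{-u}$ by only a polynomial factor $n^{O(1)}$. Since $n\varepsilon_n\ge\Delta\log n$ eventually for every $\Delta$, one has $(1-\varepsilon_n)^{\,n}\le e^{-n\varepsilon_n}\le n^{-\Delta}$ — decay faster than any power of $n$ — so summing over the finitely many proper quotients of $H$ and the choices of exceptional coordinates, the non-generic contribution is $o(1)$. Doing this uniformly in $\phi$, via a Fourier/anti-concentration estimate on $H$ for $\varepsilon_n$-balanced vectors together with the attendant stratification of $\Sur(\Z_p^{\,n},H)$ (including the regimes where the number of exceptional coordinates grows with $n$), is the step I expect to be genuinely laborious; it is the substance of the main lemmas of \cite{NW22a} (and of \cite{Woo17} in the fixed-$\varepsilon$ symmetric setting).

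Combining these, $\EE(\#\Sur(\cok(A_n),H))\to|H|^{-u}$ for every $H$; the limiting moments are bounded and hence admissible for the moment-uniqueness theorem; they coincide with those of the measure in \eqref{eq1b}; and therefore $\cok(A_n)$ converges in distribution to that measure, which is what the theorem asserts.
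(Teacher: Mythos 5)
The paper does not prove this statement --- it is quoted verbatim as \cite[Theorem~4.1]{NW22a} and used as background. Your sketch correctly reproduces the strategy of the cited reference: compute $\EE(\#\Sur(\cok(A_n),H))\to|H|^{-u}$ by writing the moment as a sum over $\phi\in\Sur(\Z_p^n,H)$ and factoring $\PP(\phi A_n=0)$ over the $n+u$ independent columns; split off the ``generic'' $\phi$ whose contribution tends to $|H|^{-u}$; use $\varepsilon_n$-balancedness (via the hypothesis $n\varepsilon_n\ge\Delta\log n$, giving super-polynomial decay $(1-\varepsilon_n)^n\le n^{-\Delta}$) to kill the contribution of the $\phi$ that are nearly non-surjective onto a quotient; and finish with a moment-uniqueness theorem in the spirit of \cite[Theorem~8.3]{Woo17}, for which the bounded moments $|H|^{-u}$ are comfortably admissible. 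You correctly flag the anticoncentration/stratification step over the non-generic $\phi$ as the technical core and defer it to \cite{NW22a}; that is an honest and accurate account of where the real work lies. I see no gaps at the level of detail you are attempting.
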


On the other hand, there had been recent progress on generalization of the cokernel condition. Friedman and Washington \cite{FW89} proved that if $A_n$ is a Haar random matrix in $\GL_n(\Z_p)$ for each $n$ and $H$ is a finite abelian $p$-group, then
\begin{equation} \label{eq1c}
\lim_{n \rightarrow \infty} \PP (\cok(A_n - I_n) \cong H) 
= \frac{\prod_{k=1}^{\infty}(1-p^{-k})}{\left| \Aut(H) \right|}.    
\end{equation}
($I_n$ denotes the $n \times n$ identity matrix.) As a natural generalization of this result, Cheong and Huang \cite{CH21} predicted the limiting joint distribution of the cokernels $\cok(P_i(A_n))$ ($1 \le i \le m$) where $A_n \in \M_n(\Z_p)$ is a Haar random matrix for each $n$ and $P_1(t), \cdots, P_m(t) \in \Z_p[t]$ are monic polynomials whose reductions modulo $p$ are distinct and irreducible. This conjecture was settled by the second author \cite[Theorem 2.1]{Lee23a}. (Cheong and Kaplan \cite[Theorem 1.1]{CK22} independently proved the conjecture under the assumption that $\deg(P_i) \le 2$ for each $i$.) Recently, Cheong and Yu \cite{CY23} generalized this to the case that $A_n$ is $\varepsilon$-balanced for each $n$.

\begin{theorem} \label{thm1d}
(\cite[Corollary 1.8]{CY23}) Let $0 < \varepsilon < 1$ be a real number and $A_n \in \M_n(\Z_p)$ be an $\varepsilon$-balanced matrix for each $n \ge 1$. Let $P_1(t), \cdots, P_m(t) \in \Z_p[t]$ be monic polynomials whose reductions modulo $p$ in $\F_p[t]$ are distinct and irreducible. Also let $H_i$ be a finite module over $R_i := \Z_p[t]/(P_i (t))$ for each $i$. Then we have
\begin{equation} \label{eq1d}
\lim_{n \rightarrow \infty} \PP \begin{pmatrix}
\cok(P_i(A_n)) \cong H_i \\
\text{for } 1 \le i \le m
\end{pmatrix} = \prod_{i=1}^{m} \frac{\prod_{k=1}^{\infty} (1-p^{-k \deg(P_i)})}{\left| \Aut_{R_i}(H_i) \right|}.
\end{equation}
\end{theorem}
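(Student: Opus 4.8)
I would prove Theorem~\ref{thm1d} by the method of moments, following Wood \cite{Woo17} and Nguyen--Wood \cite{NW22a}. The plan has three stages. First, for arbitrary finite $R_i$-modules $H_1,\dots,H_m$, compute the limiting mixed moment
\[
\lim_{n\to\infty}\EE\!\left(\prod_{i=1}^m \#\Sur_{R_i}\!\big(\cok(P_i(A_n)),H_i\big)\right)
\]
and show that it equals $1$. Second, observe that all the mixed moments of the product measure $\mu=\bigotimes_{i=1}^m \mu_i$ are likewise equal to $1$, where $\mu_i$ is the Cohen--Lenstra distribution on finite $R_i$-modules assigning mass $\prod_{k\ge1}(1-q_i^{-k})/\#\Aut_{R_i}(H_i)$ to $H_i$, with $q_i:=p^{\deg P_i}$ (each $\mu_i$ has all of its moments equal to $1$, and the $\mu_i$ are independent). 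Third, invoke a moment-uniqueness theorem --- the analogue of \cite[Theorem~8.3]{Woo17} for tuples of modules over a product of complete discrete valuation rings --- to conclude that the joint distribution of $\big(\cok(P_i(A_n))\big)_i$ converges to $\mu$, which is exactly the right-hand side of \eqref{eq1d}. Because these moments are bounded, the growth hypotheses needed for uniqueness are trivially met, so the entire difficulty lies in the moment computation.

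For the moment computation, I would first reinterpret the surjections: since $t$ acts on $\cok(P_i(A_n))=\Z_p^n/P_i(A_n)\Z_p^n$ as $A_n$ and $P_i(t)$ annihilates $H_i$, a surjection in $\Sur_{R_i}(\cok(P_i(A_n)),H_i)$ is the same datum as a surjective $\Z_p$-linear map $\phi_i\colon\Z_p^n\twoheadrightarrow H_i$ with $\phi_i A_n=t\phi_i$. Bundling these into $\Psi=(\phi_1,\dots,\phi_m)\colon\Z_p^n\to H:=\bigoplus_i H_i$ and writing $T\in\End_{\Z_p}(H)$ for the operator acting as multiplication by $t$ on each summand, the mixed moment becomes $\sum_\Psi \PP(\Psi A_n=T\Psi)$, summed over all $\Psi$ with surjective components. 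As the columns $c_1,\dots,c_n$ of $A_n$ are independent, $\PP(\Psi A_n=T\Psi)=\prod_{j=1}^n\PP\!\big(\Psi(c_j)=T\Psi(e_j)\big)$. In the Haar case $\Psi(c_j)$ is uniform on $M:=\Psi(\Z_p^n)$, so each factor vanishes unless $TM\subseteq M$, in which case it equals $(\#M)^{-n}$. Now coprimality of the $\overline{P_i}$ in $\F_p[t]$ forces the $P_i$ to be pairwise coprime in $\Z_p[t]$ (their resultants are $p$-adic units), whence $\Z_p[t]/(\prod_i P_i)\cong\prod_i R_i$ by the Chinese remainder theorem; the action of $\Z_p[T]$ on $H$ thus factors through $\prod_i R_i$, with the idempotents being polynomials in $T$, so any $T$-stable $M$ splits as $\bigoplus_i(M\cap H_i)$. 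Surjectivity of each $\phi_i$ then forces $M\cap H_i=H_i$ for all $i$, i.e.\ $M=H$. Hence the mixed moment equals $\#\Sur_{\Z_p}(\Z_p^n,H)\cdot(\#H)^{-n}$, which tends to $1$; this handles the Haar case.

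The main obstacle is the $\varepsilon$-balanced case, where $\Psi(c_j)$ is no longer uniform on $M$, so $\PP\!\big(\Psi(c_j)=T\Psi(e_j)\big)$ need not equal $(\#M)^{-n}$ and the sum over $\Psi$ must be controlled far more carefully. Here I would import the Fourier-analytic machinery of Wood \cite{Woo17}, refined in \cite{NW22a}: partition the $\Psi$ according to whether the image $M$, together with its $T$-equivariance datum, has a small quotient on which the entry distributions concentrate; for the ``generic'' $\Psi$ prove $\PP(\Psi(c_j)=c)=(\#M)^{-1}(1+o(1))$ uniformly, using $\varepsilon$-balancedness, and bound directly the contribution of the exceptional $\Psi$, which are exponentially fewer. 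The delicate point is keeping these estimates uniform over all the equivariance conditions and over the $m$-fold product structure simultaneously; once the mixed moments are pinned to $1$, the third stage applies verbatim and yields \eqref{eq1d}.
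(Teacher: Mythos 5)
The paper does not prove Theorem~\ref{thm1d}; it quotes it as \cite[Corollary 1.8]{CY23}, so there is no in-paper argument to compare against. Your outline is nevertheless a faithful reconstruction of the moment-method strategy underlying the cited result (and, for the Haar case, \cite{Lee23a}). The Haar computation is correct: the sum over tuples $\Psi=(\phi_1,\dots,\phi_m)$ reduces to $T$-stable images $M\le H=\bigoplus_i H_i$, and since the $\overline{P_i}$ are pairwise coprime in $\F_p[t]$ (hence the $P_i$ in $\Z_p[t]$), the CRT splitting $\Z_p[t]/(\prod_i P_i)\cong\prod_i R_i$ gives idempotents that are polynomials in $T$, forcing any $T$-stable $M$ projecting onto each $H_i$ to equal $H$; this yields a moment $\#\Sur_{\Z_p}(\Z_p^n,H)/(\#H)^n\to 1$. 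Appealing to a module-coefficient analogue of \cite[Theorem 8.3]{Woo17} when all moments equal $1$ is also sound, since the bounded-growth hypothesis is then vacuous. The one place you wave your hands, and you say so explicitly, is the $\varepsilon$-balanced estimate: the Fourier/depth machinery of \cite{Woo17} and \cite{NW22a} needs real adaptation to carry the $R_i$-module structure and the equivariance $\Psi A_n=T\Psi$ uniformly across the $m$-fold product, and that is where most of the technical content of \cite{CY23} lives. Overall the proposal is a correct sketch with a deliberately unfinished technical core, not a conceptual gap.
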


We remark that each $R_i$ is a discrete valuation ring with a finite residue field $R_i/pR_i \cong \F_{p^{\deg(P_i)}}$ and the cokernel $\cok(P_i(A_n))$ has a natural $R_i$-module structure defined by $t \cdot x := A_n x$. There are other ways to generalize the cokernel condition. For example, Van Peski \cite[Theorem 1.4]{VP23} computed the joint distribution of
$$
\cok(A_1), \, \cok(A_2A_1), \cdots, \, \cok(A_r \cdots A_1)
$$
for a fixed $n \ge 1$ and Haar random matrices $A_1, \cdots, A_r \in \M_n(\Z_p)$ by using explicit formulas for certain skew Hall-Littlewood polynomials. Nguyen and Van Peski \cite[Theorem 1.2]{NVP24} generalized this to the case where $A_1, \cdots, A_r$ are $\varepsilon$-balanced.

In Theorem \ref{thm1d}, the distribution of the cokernels $\cok(P_i(A_n))$ ($1 \le i \le m$) becomes asymptotically independent as $n \rightarrow \infty$. Here the condition that the reductions modulo $p$ of $P_1(t), \cdots, P_m(t)$ are distinct is essential. 
If two polynomials $P_1(t), P_2(t) \in \Z_p[t]$ have the same reduction modulo $p$, then $\cok(P_1(A))$ and $\cok(P_2(A))$ have the same $p$-rank so they cannot be asymptotically independent. (The $p$-rank of a finite abelian $p$-group $G$ is given by $r_p(G) := \rank_{\F_p}(G / pG)$.) Nevertheless, we can still consider their joint distribution. 
In the previous work of the second author \cite{Lee24}, the joint distribution in the simplest case ($P_1(t)=t$ and $P_2(t)=t+p$) was computed. 
Denote $c_r(p) := \prod_{k=1}^{r} (1-p^{-k})$ and $c_{\infty}(p) := \prod_{k=1}^{\infty} (1-p^{-k})$. 

\begin{theorem} \label{thm1e}
(\cite[Theorem 3.11]{Lee24}) Let $(\varepsilon_n)_{n \ge 1}$ be a sequence of real numbers such that for every $\Delta > 0$, we have $\varepsilon_n \ge \frac{\Delta \log n}{n}$ for sufficiently large $n$. Let $A_n \in \M_n(\Z_p)$ be an $\varepsilon_n$-balanced random matrix for each $n$. Then we have
\begin{equation*} 
\lim_{n \rightarrow \infty} \PP \begin{pmatrix}
\cok(A_n) \cong H_1 \text{ and} \\
\cok(A_n+pI_n) \cong H_2
\end{pmatrix} = \left\{\begin{matrix}
0 & (r_p(H_1) \neq r_p(H_2)) \\
\frac{p^{r^2} c_{\infty}(p)c_{r}(p)^2}{\left| \Aut(H_1) \right| \left| \Aut(H_2) \right|} & (r_p(H_1) = r_p(H_2) = r)
\end{matrix}\right.
\end{equation*}
for every finite abelian $p$-groups $H_1$ and $H_2$.
\end{theorem}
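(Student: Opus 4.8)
The plan is to compute the mixed moments
$M_n(H_1,H_2):=\EE\big(\#\Sur(\cok(A_n),H_1)\cdot\#\Sur(\cok(A_n+pI_n),H_2)\big)$, show they converge, and recover the joint law from them. Since $A_n+pI_n\equiv A_n\pmod p$, the mod-$p$ reductions of $\cok(A_n)$ and of $\cok(A_n+pI_n)$ both equal $\cok(\overline{A_n})$, so $r_p(\cok(A_n))=r_p(\cok(A_n+pI_n))$ identically; hence any limit law of the pair is supported on $\{r_p(H_1)=r_p(H_2)\}$, and the content of the statement is the value of the limiting probability there.

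The key reformulation is that $M_n$ counts modules over a single ring. A pair $\phi_i\colon\Z_p^n\twoheadrightarrow H_i$ contributes to $M_n$ precisely when $\phi_1A_n=0$ and $\phi_2(A_n+pI_n)=0$; writing $\phi=(\phi_1,\phi_2)\colon\Z_p^n\to H_1\oplus H_2$ and $T(h_1,h_2):=(0,-ph_2)$, these are equivalent to $\phi\circ A_n=T\circ\phi$. Since $T^2+pT=0$, such a $\phi$ is the same as an $S$-module homomorphism $C_n\to(H_1\oplus H_2,T)$ surjective onto each summand, where $S:=\Z_p[t]/(t^2+pt)$ and $C_n:=S^n/(tI_n-A_n)S^n$ (the $\Z_p[t]$-module $(\Z_p^n,A_n)$ base changed along $\Z_p[t]\to S$); one has $C_n/tC_n\cong\cok(A_n)$ and $C_n/(t+p)C_n\cong\cok(A_n+pI_n)$. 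Decomposing an $S$-homomorphism by its image, $M_n(H_1,H_2)=\sum_{N}\EE(\#\Sur_S(C_n,N))$ over the finitely many $S$-submodules $N\le(H_1\oplus H_2,T)$ surjecting onto each factor.

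Next I would compute $\lim_n\EE(\#\Sur_S(C_n,N))$ for each finite $S$-module $N$. An $S$-homomorphism $C_n\to N$ is a $\Z_p$-linear $\psi\colon\Z_p^n\to N$ with $\psi(a_j)=T_N\psi(e_j)$ for every column $a_j$ of $A_n$ ($T_N=$ action of $t$), so $\EE(\#\Sur_S(C_n,N))=\sum_\psi\prod_{j=1}^n\PP(\psi(a_j)=T_N\psi(e_j))$ over $\psi$ with $S$-generating image; the relation forces the $\Z_p$-span of the image to be $T_N$-stable, hence all of $N$, so $\psi$ effectively ranges over $\Z_p$-surjections. For "spread" $\psi$ the Fourier/anticoncentration estimate for sums $\sum_i a_i\psi(e_i)$ of independent $\varepsilon_n$-balanced entries shows each factor equals $(1+o(1))|N|^{-1}$, hence contributes $1+o(1)$; a dichotomy argument as in \cite{Woo17,NW22a} bounds the remaining $\psi$ (images clustering near a proper subgroup) by $n^{O(1)}(1-c\varepsilon_n)^n$, which $\to0$ exactly because $\varepsilon_n\ge\Delta\log n/n$ for arbitrarily large $\Delta$. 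Summing the limits over the admissible $N$ and re-indexing by the common $p$-rank $r$, using $\sum_{r_p(H)=r}|\Aut(H)|^{-1}=1/(p^{r^2}c_r(p)^2)$, yields a closed form for $\lim_nM_n(H_1,H_2)$.

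Finally I would turn moments into a distribution. A subtlety: $S$ is a non-regular (nodal) local ring, so a law on $S$-modules need not be determined by its moments — but we only need the joint law of $(\cok(A_n),\cok(A_n+pI_n))$ as a pair of bare $\Z_p$-modules, i.e. as a module over $\Z_p\times\Z_p$, and the limiting mixed moments above do determine such a law once one checks the polynomial-type growth bound that the moment-uniqueness input behind \cite{Woo17,NW22a} requires. It then remains to verify that the measure $\nu$ in the statement reproduces those moments, i.e. $\sum_{G_1,G_2}\nu(G_1,G_2)\#\Sur(G_1,H_1)\#\Sur(G_2,H_2)=\lim_nM_n(H_1,H_2)$; done $p$-rank by $p$-rank this is a finite identity, and it forces $\nu(H_1,H_2)=0$ when $r_p(H_1)\ne r_p(H_2)$. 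I expect the real difficulty to lie in the two computational steps: the $S$-module moment estimate (where the shift $-p\phi_2(e_j)$, the combinatorics of $S$-submodules of $H_1\oplus H_2$, and the non-generic error terms must be handled simultaneously), and the final collapse of the combinatorial sum to the clean factor $p^{r^2}c_\infty(p)c_r(p)^2/(|\Aut(H_1)|\,|\Aut(H_2)|)$.
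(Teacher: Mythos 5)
The paper does not prove Theorem \ref{thm1e} itself; it cites it from \cite{Lee24}. What the paper does contain is the moment computation (Theorem \ref{thm6c}) and a description of the proof strategy: compute the mixed moments $\lim_n\EE(\#\Sur(\cok(A_n+px_iI_n),G_i))$ as a count of subgroups $G\le G_1\times G_2$ surjecting onto each factor and stable under $pT_x$, then invoke the moment-uniqueness statement for pairs of random groups (Theorem \ref{thm6a}, also from \cite{Lee24}). Your proposal matches this blueprint exactly. Your reformulation via the nodal ring $S=\Z_p[t]/(t(t+p))$ is an elegant repackaging of the intertwining condition $\phi A_n=T\phi$ that the paper states directly in Theorem \ref{thm6c}; the resulting count over $S$-submodules of $H_1\oplus H_2$ surjecting onto each factor is literally the set $S_{G_1,G_2}(X_2)$ of the paper. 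Your pivot from $S$-modules to pairs-of-$\Z_p$-modules for the moment-uniqueness step is also the right move, since $S$ is non-regular and a priori moment uniqueness for $S$-modules is not available, whereas \cite[Theorem~1.3]{Lee24} (= Theorem \ref{thm6a}) is stated for $r$-tuples.

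Two points are worth flagging. First, the step you call ``once one checks the polynomial-type growth bound'' is the crux of why the argument works for $m=2$ and collapses for $m\ge3$: Example \ref{ex6d} shows that already for $G_1=\cdots=G_m=(\Z/p\Z)^t$ the count is $\ge c_\infty(p)p^{(m-1)t^2}$, which is within the $O\bigl(\prod_i m(G_i)\bigr)=O(p^{mt^2/2})$ envelope only when $m=2$. Your proposal is correct to call this a ``subtlety'' but in practice it is the load-bearing estimate. Second, the closed form for the limit genuinely requires the last combinatorial collapse; you correctly cite $\sum_{r_p(H)=r}\lvert\Aut(H)\rvert^{-1}=p^{-r^2}/c_r(p)^2$ and the formula is consistent with it (marginalizing over $H_2$ recovers the Friedman--Washington law), but extracting the measure from $\lvert S_{H_1,H_2}(X_2)\rvert$ rather than merely checking the measure against the moments involves nontrivial bookkeeping that your sketch only gestures at. As an outline of the argument in \cite{Lee24}, however, your proposal has the right structure, identifies the right auxiliary ring, and locates the two genuinely computational steps accurately.
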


It is very hard to compute the joint distribution of the cokernels $\cok(P_i(A))$ ($1 \le i \le m$) in general, even in the case that each $A_n$ is equidistributed. Thus we propose the following easier problem. 

\begin{problem} \label{prob1f}
Let $P_1(t), \cdots, P_m(t) \in \Z_p[t]$ be monic polynomials whose reductions modulo $p$ are irreducible, $R_i=\Z_p[t]/(P_i(t))$, $\calM_{R_i}$ be the set of finitely generated $R_i$-modules and $\calM = \prod_{i=1}^{m} \calM_{R_i}$. For a given $(H_1, \cdots, H_m) \in \calM$, determine whether there exists a matrix $A_n \in \M_n(\Z_p)$ such that $\cok(P_i(A_n))\cong H_i$ for each $i$. In other words, determine the set 
$$
\calC(P_1, \cdots, P_m) := \begin{Bmatrix}
(H_1, \cdots, H_m) \in \calM : \text{there exists } A_n \in \M_n(\Z_p) \text{ for some } n\\
\text{such that } \cok(P_i(A_n)) \cong H_i \text{ for each } 1 \le i \le m
\end{Bmatrix}.
$$
\end{problem}

\begin{remark} \label{rmk1g}
\begin{enumerate}
    \item For $A \in \M_{n}(\Z_p)$ and $B \in \M_{n'}(\Z_p)$, we have $\cok \left ( P_i \begin{pmatrix}
A & O\\ 
O & B
\end{pmatrix} \right ) \cong \cok(P_i(A)) \times \cok(P_i(B))$ so the set $\calC(P_1, \cdots, P_m)$ is closed under componentwise finite direct product. 
    
    \item In the above problem, we allow the case that $\cok(P_i(A_n))$ have a free part (i.e. $\det(P_i(A_n)) = 0$), contrary to Theorem \ref{thm1d}. The probability that $\det(P_i(A_n)) = 0$ for some $i$ is always zero, but it does not mean that this event cannot happen. 
\end{enumerate}
\end{remark}

In this paper, we analyze the case where $P_i(t) = t + px_i$ for some $x_1, \cdots, x_m \in \Z_p$ whose reductions modulo $p$ are distinct. 
Let $X_m := \left \{ x_1, \cdots, x_m \right \}$ be a finite ordered subset of $\Z_p$ of size $m$ whose elements have distinct reductions modulo $p$ and denote $\calC_{X_m} := \calC(t+px_1, \cdots, t+px_m) \subset \calM_{\Z_p}^{m}$. The main result of the paper is the following theorem, which determines the set $\calC_{X_m}$ for $m \le 4$. Note that in each case, the set $\calC_{X_m}$ does not depend on the choice of $X_m$.

\begin{theorem} \label{thm1h}
For $(H_1, \cdots, H_m) \in \calM_{\Z_p}^m$, write $H_i \cong \Z_p^{d_{\infty, i}} \times \prod_{r=1}^{\infty} (\Z/p^r\Z)^{d_{r, i}}$, $D_r := \sum_{i=1}^{m} d_{r, i}$ and $s_i := \rank_{\F_p}(H_i/pH_i) = \sum_{r=1}^{\infty} d_{r,i} + d_{\infty, i}$ for each $i$.
\begin{enumerate}
    \item \label{item1g1} $\calC_{X_1} = \calM_{\Z_p}$.

    \item \label{item1g2} $\calC_{X_2} = \left \{ (H_1, H_2) \in \calM_{\Z_p}^2 : s_1=s_2 \right \}$.

    \item \label{item1g3} $\calC_{X_3} = \left \{ (H_1,H_2,H_3) \in \calM_{\Z_p}^3 : s_1=s_2=s_3 \text{ and } 2d_{1, i} \le D_1 \;\; (1 \le i \le 3) \right \}$.

    \item \label{item1g4} $\calC_{X_4} = \left \{ (H_1, H_2, H_3, H_4) \in \calM_{\Z_p}^4 : \begin{matrix}
s_1=s_2=s_3=s_4, \; 3d_{1, i} \le D_1 \;\; (1 \le i \le 4) \text{ and} \\
d_{1, i} + 2(d_{1,j}+d_{2,j}) \le D_1+D_2 \;\; (1 \le i,j \le 4)
\end{matrix} \right \}$.
\end{enumerate}
\end{theorem}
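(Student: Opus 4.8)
The plan is to reduce to the case where $\bar A := A \bmod p$ is nilpotent, prove necessity of the inequalities via a mod-$p^k$ matrix-pencil analysis, and prove sufficiency by an explicit block construction combined with Remark~\ref{rmk1g}(1). For the reduction, factor the characteristic polynomial of $\bar A$ as $t^\ell \bar g(t)$ with $\bar g(0) \ne 0$, lift the coprime factorization by Hensel's lemma, and obtain an $A$-stable splitting $\Z_p^n = V_0 \oplus V_1$ with $\bar A|_{V_0}$ nilpotent and $\bar A|_{V_1}$ invertible; since $A|_{V_1} + px_iI$ is invertible over $\Z_p$ for each $i$, one has $\cok(A+px_iI) \cong \cok(A|_{V_0}+px_iI)$, so we may assume $\bar A$ nilpotent. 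Then $r_p(\cok(A+px_iI)) = \dim_{\F_p}\cok(\bar A)$ is independent of $i$, giving $s_1 = \cdots = s_m$. (Equivalently, with $M := \cok(tI-A)$ — a finitely generated $\Z_p[t]$-module, free of finite rank over $\Z_p$, with $t$ acting as $A$ and $M/pM$ a module over $\F_p[t]/(t^N)$ — one has $\cok(A+px_iI) \cong M/(t+px_i)M$, and the problem is to determine which tuples $(M/(t+px_i)M)_i$ occur.) For sufficiency, since the invariants $d_{r,i}$ and the ranks $s_i$ both add under $\oplus$, it suffices to realise a family of ``atoms'' generating the monoid cut out by the asserted conditions.

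\emph{Necessity.} Writing $H_i := \cok(A+px_iI)$, we have $\cok((A+px_iI)\bmod p^k) \cong H_i/p^kH_i$, and $\log_p|H_i/p^kH_i| = ks_i - \sum_{j<k}(k-j)d_{j,i}$ recovers $d_{1,i}$ from $k=2$ and $d_{2,i}$ from $k=3$. Expanding a vector of $\ker(C + p\bar x_iI)$ over $\Z/p^2$ (with $C = A\bmod p^2$) as $v_0 + pv_1$, and using $|\cok| = |\ker|$ for an endomorphism of a finite module, one gets $d_{1,i} = \rank(\bar x_i\rho + w)$ for two fixed $\F_p$-linear maps $\rho, w \colon \ker\bar A \to \cok\bar A$ between $r$-dimensional spaces. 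The key linear-algebra fact is then: for any pencil of $r\times r$ matrices over a field and any $N$ distinct points $\zeta_1,\dots,\zeta_N$ of $\PP^1$, the Kronecker canonical form gives $(N-1)\max_i\rank(\text{pencil at }\zeta_i) \le \sum_i\rank(\text{pencil at }\zeta_i)$ — the regular part accounts for a total rank drop equal to its size, and the singular part contributes a constant deficiency at most its size. With $N = m$ this yields $2d_{1,i} \le D_1$ for $m = 3$ and $3d_{1,i} \le D_1$ for $m = 4$. The remaining $m = 4$ inequality $d_{1,i}+2(d_{1,j}+d_{2,j}) \le D_1+D_2$ comes from running the same programme over $\Z/p^3$: continuing the expansion to $v_0 + pv_1 + p^2v_2$ produces, on top of the first-order condition $\bar x_i\rho + w$, a second-order obstruction living again in $\cok\bar A$; its dependence on $\bar x_i$ assembles into a ``two-step'' pencil, and a suitably refined Kronecker-type bound for it gives exactly that inequality.

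\emph{Sufficiency, and the main obstacle.} On the construction side, I would use small blocks with nilpotent reduction: $1\times 1$ blocks $(pa)$ realise the tuple equal to $\Z/p^{1+v_p(a+x_i)}$ at index $i$ (or $\Z_p$ when $a+x_i = 0$) and $\Z/p$ at every other index; $2\times2$ companion-type blocks such as the companion matrix of $t^2 - pu$ or $\left(\begin{smallmatrix}pa & 1\\ 0 & pd\end{smallmatrix}\right)$ realise $(\Z/p,\dots,\Z/p)$, $(\Z_p,\Z/p,\dots,\Z/p)$ and $(\Z/p^e,\Z/p^f,\Z/p^2,\dots,\Z/p^2)$ with $e,f \ge 2$; for $m = 4$ one also uses companion matrices of polynomials $\equiv t^d \bmod p$ whose roots are tuned to the points $-px_i$. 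One then shows every tuple with $s_1 = \cdots = s_m$ satisfying the stated inequalities is a componentwise direct product of such atoms — a packing statement for the integer vectors $(d_{1,i})_i, (d_{2,i})_i, \dots$ under the triangle-type constraints. I expect the two real difficulties to be: (a) on the necessity side, isolating the correct second-order gadget over $\Z/p^3$ and extracting the clean mixed inequality — this is appreciably more delicate than the level-$2$ pencil argument; and (b) on the sufficiency side, pinning down a sufficient supply of atoms and carrying out the combinatorial packing (elementary but intricate, and including a check that the small primes $p = m$ cause no trouble, which holds thanks to the companion-matrix constructions).
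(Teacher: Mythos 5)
Your pencil argument for the first-order necessity is a genuinely different and in my view cleaner route than the paper's. You correctly identify $d_{1,i}=\operatorname{rank}_{\F_p}(\bar x_i\rho + w)$ for a fixed pencil on the $s$-dimensional spaces $\ker\bar A\to\cok\bar A$, and the Kronecker canonical form does give $(m-1)\max_i\operatorname{rank}(L(x_i))\le\sum_i\operatorname{rank}(L(x_i))$: at the point $x$ the rank is $R-\mu(x)$ where $R$ is the generic rank and $\mu(x)$ is the number of Jordan blocks at $x$, the $\mu$'s sum to at most the size $\nu$ of the regular part, and $R-\nu\ge0$. This yields $2d_{1,i}\le D_1$ for $m=3$ and $3d_{1,i}\le D_1$ for $m=4$ in one stroke, whereas the paper obtains the same inequalities by a size-reduction induction (Lemmas~\ref{lem3c} and~\ref{lem3f}) that peels one row and column off a ``first integral'' $P(t)$ at each step. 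The $m\le 3$ sufficiency via $1\times1$ blocks is also essentially what the paper does through Example~\ref{ex2d} and the decomposition~(\ref{eq2b}).

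However, the two places you flag as difficulties are exactly where the paper's real content lies, and as written your proposal does not close them. For the mixed $m=4$ inequality $d_{1,i}+2(d_{1,j}+d_{2,j})\le D_1+D_2$ you gesture at a ``two-step pencil'' with a ``suitably refined Kronecker-type bound,'' but no such gadget is constructed and it is far from obvious that the second-order obstruction assembles into a pencil for which a clean rank inequality exists; the paper instead carries this out by the very long case analysis of Lemma~\ref{lem3e} (tracking how the pair $(d_{0,P(x_i)},d_{1,P(x_i)})$ can change when one row/column is removed, using Lemma~\ref{lem3d} on when a rank-one perturbation $A+p^rxJ$ drops rank) feeding into the inductive Theorem~\ref{thm3g}, and then Corollary~\ref{cor3h} extracts the stated inequality by a careful choice of $(\alpha_0,\alpha_1)$. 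On the sufficiency side, declaring atoms and asserting a ``packing'' is not a proof either: the paper's Theorem~\ref{thm3o} identifies the precise generating set $\calA_{0,1}\cup\calA_{1,2}\cup\calA_{2,m}\cup\mathcal{D}_m$ of $\calC_{X_m,1,2}$ and then uses the zone-allocation combinatorics (properties \ref{item3o1}--\ref{item3o4}) to show that any presentation satisfying the two inequalities for suitable $(\alpha_0,\alpha_1)$ decomposes into those atoms; you would need to carry out an equivalent decomposition, and verify that your proposed companion-matrix atoms actually generate the right monoid under exactly these constraints. Until those two steps are filled in, what you have is a correct treatment of $m\le3$, a clean alternative proof of the inequality $(m-1)\max_i d_{1,i}\le D_1$, and a plausible outline for $m=4$, not a complete proof of Theorem~\ref{thm1h}(\ref{item1g4}).
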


If $(H_1, \cdots, H_m) \in \calC_{X_m}$, then there exists $A_n \in \M_n(\Z_p)$ such that $\cok(A_n+px_iI_n) \cong H_i$ for each $i$. In this case, we have $(\Z/p\Z)^{s_i} \cong H_i/pH_i \cong \cok_{\F_p}(\overline{A_n})$ where $\overline{A_n} \in \M_n(\F_p)$ is the reduction modulo $p$ of $A_n$, which implies that $s_1=s_2=\cdots = s_m$. When $m=2$, this is the only condition we need for elements of $\calC_{X_m}$. We need some additional relations between the numbers $d_{r, i}$ for larger $m$. It is natural to suggest the following conjecture from Theorem \ref{thm1h}.

\begin{conjecture} \label{conj1i}
Let $d_{r,i}$, $D_r$ and $s_i$ be as in Theorem \ref{thm1h}. For every integer $m \ge 1$, we have $(H_1, \cdots, H_m) \in \calC_{X_m}$ if and only if $s_1=\cdots=s_m$ and 
$$
\sum_{k=1}^{r-1}\left ( \sum_{l=1}^{k} d_{l, i_k} \right ) + (m-r) \sum_{l=1}^{r} d_{l, i_r}
\le D_1+ \cdots + D_r
$$
for every $1 \le r \le m-2$ and $1 \le i_1, \cdots, i_r \le m$.
\end{conjecture}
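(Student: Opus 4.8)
The plan is to work with the ``dual'' invariants $\lambda_{j}(H_{i}) := d_{\infty,i} + \sum_{r \ge j} d_{r,i}$, so that $\lambda_{1}(H_{i}) = s_{i}$ and $\sum_{l=1}^{k} d_{l,i} = s_{i} - \lambda_{k+1}(H_{i})$; under $s_{1}=\cdots=s_{m}=s$ the conjectured inequality becomes $\sum_{i=1}^{m} \lambda_{r+1}(H_{i}) \le s + \sum_{k=1}^{r-1} \lambda_{k+1}(H_{i_{k}}) + (m-r)\lambda_{r+1}(H_{i_{r}})$ for all $1\le r\le m-2$ and all $i_{1},\dots,i_{r}$. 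Both inclusions should be attacked through this form. First I would reduce to a single operator: given $A_{n}$, decompose $\F_{p}^{n}$ into the generalized eigenspaces of $\overline{A_{n}}$; this decomposition lifts to an $A_{n}$-stable decomposition of $\Z_{p}^{n}$, and since $t+px_{i}$ reduces to $t$, the block $A_{n}+px_{i}I_{n}$ is invertible on every summand except the one on which $\overline{A_{n}}$ is nilpotent. So we may assume $\overline{A_{n}}$ nilpotent, of Jordan type a partition $\nu$ with exactly $s$ parts. (Equivalently, $\cok\big(\prod_{i}(A_{n}+px_{i}I_{n})\big)$ is a finite module over the Gorenstein $\Z_{p}$-order $S:=\Z_{p}[t]/\big(\prod_{i}(t+px_{i})\big)$ inside $\prod_{i=1}^{m}\Z_{p}$, and $\cok(A_{n}+px_{i}I_{n})$ is its specialization along the $i$-th coordinate projection; this viewpoint is convenient for the constructions but not essential.)

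\textbf{Necessity.} Write $L_{i}=\im(A_{n}+px_{i}I_{n})$ and filter $\Z_{p}^{n}$ by powers of $p$. An elementary manipulation gives $\lambda_{j}\big(\cok(A_{n}+px_{i}I_{n})\big) = n - \dim_{\F_{p}} W_{i}^{(j)}$, where $W_{i}^{(j)}$ is the image of $L_{i}\cap p^{j-1}\Z_{p}^{n}$ in $p^{j-1}\Z_{p}^{n}/p^{j}\Z_{p}^{n}\cong\F_{p}^{n}$. At level $j=2$ one computes $W_{i}^{(2)}$ explicitly and obtains $\lambda_{2}\big(\cok(A_{n}+px_{i}I_{n})\big)=\dim_{\F_{p}}\ker(\phi+\overline{x_{i}}\,\iota)$ for two fixed $\F_{p}$-linear maps $\phi,\iota\colon\ker\overline{A_{n}}\to\F_{p}^{n}/\im\overline{A_{n}}$ between $s$-dimensional spaces; thus level $2$ is governed by the linear matrix pencil $\phi+\mu\iota$, which depends on $A_{n}$ only through $\phi,\iota$ and on $i$ only through $\overline{x_{i}}$. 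Kronecker's classification of pencils then yields exactly the $r=1$ inequalities: if $\gamma$ is the generic corank and $\rho\le s-\gamma$ the size of the regular part, then $\gamma\le\lambda_{2}(\cdot)$ for each $i$ and $\sum_{i}\lambda_{2}\big(\cok(A_{n}+px_{i}I_{n})\big)\le m\gamma+\rho\le(m-1)\gamma+s$, which rearranges to $(m-1)d_{1,i}\le D_{1}$. The general case should follow by promoting this to a \emph{flag of matrix-polynomial pencils} $P^{(1)}(\mu)\supseteq P^{(2)}(\mu)\supseteq\cdots$ (in kernel) on $\ker\overline{A_{n}}$, of bounded degree, with $\lambda_{j}\big(\cok(A_{n}+px_{i}I_{n})\big)=\dim\ker P^{(j)}(\overline{x_{i}})$; the nested sum $\sum_{k=1}^{r-1}\lambda_{k+1}(H_{i_{k}})$ and the coefficient $m-r$ should drop out of a B\'ezout-type bound on the total ``corank excess at level $r$'', in which localizing at level $k<r$ at the point $\overline{x_{i_{k}}}$ cuts down the room available higher up. Making this bound exact — proving that excess at level $r$ forces a quantitatively matching loss at every lower level simultaneously, across the $m$ distinct points — is the main obstacle; I do not expect even the assertion that the resulting ``excess polytope'' has precisely these facets to be routine.

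\textbf{Sufficiency.} Here I would use that $\calC_{X_{m}}$ is closed under componentwise direct product (Remark \ref{rmk1g}), hence is a submonoid of $\calM_{\Z_{p}}^{m}$, and it suffices to: (i) note that the tuples meeting the conjectured conditions are exactly the lattice points of an explicit rational polyhedral cone $\mathcal{T}_{m}$ in the coordinates $(d_{r,i})_{r\ge1}$ and $(d_{\infty,i})$ (the free parts entering through $\lambda_{j}$); and (ii) exhibit a finite library of explicit matrices whose cokernel-tuples generate the monoid of lattice points of $\mathcal{T}_{m}$. The building blocks should generalize those behind Theorem \ref{thm1h}: small matrices over $\Z_{p}$ — essentially one $S$-``block'', twisted by a $\GL$-conjugation and a $p$-divisible perturbation so as to create a single ``jump at level $r$'' at a prescribed point $\overline{x_{i_{0}}}$ (one cokernel becomes one step taller, the others one step wider, keeping all $s_{i}$ equal), the possibility of placing the jump at any point being precisely why $\calC_{X_{m}}$ is independent of $X_{m}$ — together with the companion block of $\prod_{i}(t+px_{i})$ realizing $(\Z_{p},\dots,\Z_{p})$ and hybrids for mixed free parts. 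Step (i) is immediate; the substance of (ii) is the polyhedral combinatorics of $\mathcal{T}_{m}$ — listing its extreme rays, showing each is realized, and showing that the realized tuples form a Hilbert basis (a Gordan-type argument handling any non-unimodularity). For $m\le4$ this is the finite check already performed; for general $m$ it is laborious but, unlike necessity, ``only'' a combinatorial verification once the block library is fixed.

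\textbf{Summary.} The decisive difficulty is the necessity bound — establishing that the flag of matrix-polynomial pencils attached to an arbitrary $A_{n}$ obeys precisely the conjectured inequalities at all levels $1\le r\le m-2$. An alternative route, parallel to the distribution part of this paper, would compute the limiting joint distribution of $\big(\cok(A_{n}+px_{i}I_{n})\big)_{i}$ for Haar-random $A_{n}$ and read off its support over finite modules; but recovering that support from the (mixed) moments seems to require the same combinatorial input, so it is unlikely to circumvent the obstacle.
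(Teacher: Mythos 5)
The statement you were asked to prove is Conjecture~\ref{conj1i}, which the paper itself does \emph{not} prove: it is stated only as the natural extrapolation from Theorem~\ref{thm1h}, which the paper establishes just for $m\le4$. Your submission is explicit about being a sketch rather than a proof, and it correctly locates the decisive obstacle in the necessity direction for $r\ge2$. Since the paper has no proof of this statement to compare against, the right question is only whether your sketch is sound as far as it goes and whether the gap you flag is real.

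On both counts your self-assessment is accurate. The reformulation in the variables $\lambda_j(H_i)=d_{\infty,i}+\sum_{r\ge j}d_{r,i}$ is correct (the $r=1$ case does become $(m-1)d_{1,i}\le D_1$), and your Kronecker-pencil derivation of that case is a valid route genuinely different from the paper's: the paper's necessity arguments (Lemma~\ref{lem3c}, and for $m=4$ Lemma~\ref{lem3e} together with Theorem~\ref{thm3g}) shrink the matrix step by step using Smith-normal-form reductions of first integrals rather than appealing to a classification. Your estimate $\sum_i\lambda_2(H_i)\le m\gamma+\rho\le(m-1)\gamma+s\le(m-1)\lambda_2(H_j)+s$ is right, granted the observation (which you make) that the regular part of an $s\times s$ pencil of generic corank $\gamma$ has size $\rho\le s-\gamma$. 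Your sufficiency plan — recognize the feasible tuples as the lattice points of a polyhedral cone and realize a Hilbert basis by small explicit blocks — is the same strategy the paper implements as ``zone theory'' in Section~\ref{Sub33} with generators $\calA_{r,d}$ and $\mathcal{D}_m$, though your phrase ``only combinatorial verification'' understates the difficulty of identifying the correct generating set for general $m$. The genuine gap is exactly what you name: for $r\ge 2$ the relevant object is not a pencil but a nested family of higher-order polynomials (the paper's first integrals $A_0+tA_1+pt^2A_2+\cdots$), and there is no off-the-shelf Kronecker-type normal form bounding the coranks at level $r$ together with all lower levels simultaneously at $m$ distinct points; your ``Bézout-type bound on total corank excess'' is a heuristic, not an argument. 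Indeed the paper's own handling of the single new inequality at $m=4$ requires the full case analysis of Lemma~\ref{lem3e}, with no visible pattern to automate for general $m$, which is precisely why the statement remains a conjecture.
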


The paper is organized as follows. In Section \ref{Sub21}, we provide basic notations and provide some basic properties of the sets $\calC_{X_m,l,k}$ (see Problem \ref{prob2b}) which are helpful to understand the set $\calC_{X_m}$. We prove the main result of the paper (Theorem \ref{thm1h}) for the case $m \le 3$ in Section \ref{Sub22}. 
The technical heart of the paper is a reduction procedure, which is explained in Section \ref{Sub31}. Using this reduction procedure and explicit linear-algebraic computations on matrices over $\Z_p$, we prove the necessary condition of Theorem \ref{thm1h} for $m=4$ in Section \ref{Sub32}. After that, we prove the sufficient condition of Theorem \ref{thm1h} for $m=4$ in Section \ref{Sub33} based on a zone theory.

The last section is devoted to the joint distribution of the cokernels $\cok(A_n+px_iI_n)$ ($1 \le i \le m$). In Section \ref{Sub41}, we prove that if each $A_n$ is a Haar random matrix, then the joint distribution of the cokernels $\cok(A_n+px_iI_n)$ ($1 \le i \le m$) converges as $n \rightarrow \infty$. 
In fact, we prove the following more general result. Note that our proof does not provide the limiting joint distribution.
\begin{theorem} \label{thm1j}
(Theorem \ref{thm5a}) Let $A_n \in \M_n(\Z_p)$ be a Haar random matrix for each $n \ge 1$, $y_1, \cdots, y_m$ be distinct elements of $\Z_p$ and $H_1, \cdots, H_m$ be finite abelian $p$-groups. Then the limit 
$$
\lim_{n \rightarrow \infty} \PP \begin{pmatrix}
\cok(A_n + y_i I_n) \cong H_i \\
\text{for } 1 \le i \le m
\end{pmatrix}
$$
converges.
\end{theorem}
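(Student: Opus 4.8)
The plan is to prove convergence by the method of mixed moments, in the spirit of \cite{Woo17, CY23, Lee24}, and in particular \emph{not} to identify the limiting distribution (which explains the remark just before the statement). Since $\det(A_n + y_i I_n) \neq 0$ almost surely for each $i$, all the cokernels are finite almost surely, so $\big(\cok(A_n+y_iI_n)\big)_{1 \le i \le m}$ is a.s.\ a random element of the countable set of $m$-tuples of finite abelian $p$-groups. By the joint moment--distribution determinacy theorem for such tuples (the joint version of \cite[Theorem 8.3]{Woo17}, as in the proof of Theorem \ref{thm1d} given in \cite{CY23}), it suffices to prove: (a) for every $m$-tuple $(G_1, \cdots, G_m)$ of finite abelian $p$-groups the mixed moment $E_n(G_1,\cdots,G_m) := \EE\big( \prod_{i=1}^{m} \#\Sur(\cok(A_n + y_iI_n), G_i) \big)$ converges to a finite limit as $n \to \infty$; and (b) the limiting moments grow slowly enough to determine a unique probability distribution.

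For (a), I would compute $E_n$ directly. A surjection $\cok(A_n + y_iI_n) \twoheadrightarrow G_i$ is the same as a surjection $F_i \colon \Z_p^n \twoheadrightarrow G_i$ with $F_i A_n = -y_i F_i$, so
\[
E_n(G_1,\cdots,G_m) \;=\; \sum_{(F_i)_i \,\in\, \prod_i \Sur(\Z_p^n, G_i)} \PP\!\big( F_i A_n = -y_i F_i \text{ for all } i \big).
\]
Fix such a tuple and set $W := \im\big( (F_1,\cdots,F_m) \colon \Z_p^n \to \prod_i G_i \big)$, a subgroup with $\pi_i(W) = G_i$ for all $i$, where $\pi_i$ is the $i$-th projection. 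Writing $A_n$ by its columns $a_1, \cdots, a_n$, the conditions $F_i A_n = -y_i F_i$ amount to one $\Z_p$-linear constraint on each $a_j$ separately, and since $A_n$ is Haar random the probability above equals $|W|^{-n}$ when a certain compatibility condition holds and $0$ otherwise. The crucial point is that this condition is \emph{independent of $n$}: it states exactly that $W$ is stable under the $\Z_p$-module endomorphism $\mu$ of $\prod_i G_i$ given by $\mu\big((x_i)_i\big) = (y_i x_i)_i$. Now grouping the tuples $(F_i)_i$ according to the subgroup $W$ they produce --- the number with a given image $W$ being $\#\Sur(\Z_p^n, W)$ --- and using that $\#\Sur(\Z_p^n, W)/|W|^n \to 1$ as $n \to \infty$, one gets
\[
\lim_{n \to \infty} E_n(G_1,\cdots,G_m) \;=\; \#\Big\{\, W \le \textstyle\prod_{i=1}^m G_i \;:\; \pi_i(W) = G_i \text{ for all } i,\ \ \mu(W) \subseteq W \,\Big\},
\]
a finite number.

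For (b), the limiting moment just described is bounded by the number of subgroups of $\prod_i G_i$, which for a group of order $p^k$ is at most $(k+1)\,p^{\lfloor k^2/4 \rfloor}$; this lies far below $|\wedge^2(\prod_i G_i)|$ and hence well within the range in which the moments determine a unique distribution, so the joint distribution converges. The step I expect to be the main obstacle is the exact evaluation of $\PP(F_i A_n = -y_i F_i\ \forall i)$: one must correctly identify, inside $\prod_i \Hom(\Z_p^n, G_i)$, the image of the $\Z_p$-linear map $A \mapsto (F_i A)_i$ on $\M_n(\Z_p)$ and check that the resulting condition on $W$ genuinely does not depend on $n$. It is here that the hypothesis that the $y_i$ are distinct (but \emph{not} necessarily distinct modulo $p$) enters, and only through the twisting endomorphism $\mu$; all patterns of congruences among the $y_i$ are handled uniformly. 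A secondary point is to confirm that the above growth bound meets the precise hypotheses of the joint moment--determinacy theorem and that the limiting object is an honest probability measure.
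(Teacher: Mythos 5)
Your step (a) --- the evaluation of the limiting mixed moment $\lim_n E_n(G_1,\ldots,G_m)$ as the number of subgroups $W \le \prod_i G_i$ with $\pi_i(W)=G_i$ for all $i$ and $\mu(W)\subseteq W$ --- is correct, and is essentially the paper's Theorem \ref{thm6c}. The gap is in step (b): for $m\ge 3$ these limiting moments are too large for the joint moment--determinacy theorem. The correct growth bound for a joint distribution of $m$ random groups (Theorem \ref{thm6a}, from \cite{Lee24}) is $O(\prod_{i} m(G_i))$, not the wedge-square bound $O(|\wedge^2(\prod_i G_i)|)$ you compare against; these differ substantially, since for $G_1=\cdots=G_m=(\Z/p\Z)^t$ one has $\prod_i m(G_i)=p^{mt^2/2}$ while $|\wedge^2((\Z/p\Z)^{mt})|$ is of size $p^{m^2t^2/2}$. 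Moreover, the theorem permits the $y_i$ to all be congruent modulo $p$, in which case the stability condition $\mu(W)\subseteq W$ is vacuous on $p$-torsion groups, and Example \ref{ex6d} shows the limiting moment is then at least $c_\infty(p)\,p^{(m-1)t^2}$, which exceeds any constant times $p^{mt^2/2}$ for $m\ge 3$ and $t$ large. So moment determinacy genuinely fails --- indeed Example \ref{ex6b} exhibits distinct distributions sharing moments of this magnitude. This is precisely the obstruction the paper highlights at the end of Section \ref{Sec1} and discusses in Section \ref{Sub42}; your proposal runs straight into it.

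The paper's actual proof of Theorem \ref{thm5a} avoids moments entirely. It introduces quantities $P_{n,k}=\PP(\cok(M_{A,[B_1,\ldots,B_k]}(y_i))\cong H_i \text{ for all } i)$ with $A$ and the columns $B_j$ Haar and independent, and shows (via Lemma \ref{lem5b} and an explicit row/column reduction) that $|P_{n,k}-P_{n-1,k+1}|\le 1-c_{n-2,1}$. Iterating, and using that $P_{n-M,M}=P_{n-M,M+1}$ once $M$ is large relative to the exponents of the $H_i$, one gets $|P_{n,0}-P_{n+1,0}|<4/p^{n-M-1}$, so the sequence $(P_{n,0})_n$ is Cauchy and converges. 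This argument gives convergence without identifying the limit --- which is exactly why the remark before Theorem \ref{thm1j} says the proof does not provide the limiting distribution, and why your (otherwise sound) step (a) cannot be completed by a moment argument.
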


In Section \ref{Sub42}, we compute the mixed moments of the cokernels $\cok(A_n+px_iI_n)$ ($1 \le i \le m$) where each $A_n \in \M_n(\Z_p)$ is given as in Theorem \ref{thm1e}. 
Then it is natural to follow the proof of Theorem \ref{thm1e} given in \cite{Lee24}, where the second author determined the unique joint distribution of $\cok(A_n)$ and $\cok(A_n+pI_n)$ from their mixed moments. However, it turns out that for $m \ge 3$, we cannot determine a unique joint distribution of $\cok(A_n+px_iI_n)$ ($1 \le i \le m$) from their mixed moments using existing methods (see Example \ref{ex6d}). 

Let $Y$ be a random $m$-tuple of finite abelian $p$-groups (or a random $m$-tuple of finitely generated $\Z_p$-modules in general). When $Y$ is supported on a smaller set of $m$-tuples of finite abelian $p$-groups, it is more likely that the distribution of $Y$ is uniquely determined by its mixed moments. Therefore the information on the support of $Y$ would be helpful for determining its distribution. This is one of our motivations for concerning Problem \ref{prob1f} in this paper. In the future work, we hope to determine the joint distribution of $\cok(A_n+px_iI_n)$ ($1 \le i \le m$) from their mixed moments, together with combinatorial relations between the cokernels provided in Theorem \ref{thm1h} and Conjecture \ref{conj1i}. 

\section{Preliminaries} \label{Sec2}


The following notations will be used throughout the paper.
\begin{itemize}
    \item For a prime $p$, let $\GG_p$ be the set of isomorphism classes of finite abelian $p$-groups and $\calM_{\Z_p}$ be the set of isomorphism classes of finitely generated $\Z_p$-modules.

    \item $\extz := \Z \cup \left \{ \infty \right \}$, $\Z_{\ge c} := \left \{ x \in \Z : x \ge c \right \}$ and $\extz_{\ge c} := \Z_{\ge c} \cup \left \{ \infty \right \}$ for $c \in \Z$.

    \item Let $m$ be a positive integer and $X_m = \left \{ x_1, \cdots, x_m \right \}$ be a finite ordered subset of $\Z_p$ whose elements have distinct reductions modulo $p$. 
    
    \item For $A \in \M_n(\Z_p)$, write $\cok(A) \cong \prod_{r \in \extz_{\ge 1}} (\Z_p/ p^r \Z_p)^{d_{r, A}}$ (we use the convention that $p^{\infty} = 0$ and thus $\Z_p/ p^{\infty} \Z_p = \Z_p$) and $d_{0, A} := n - \sum_{r \in \extz_{\ge 1}} d_{r, A}$. In this case, the Smith normal form of $A$ is given by 
    $$
    \diag(\overset{d_{0, A}}{\overbrace{1, \cdots, 1}}, \overset{d_{1, A}}{\overbrace{p, \cdots, p}}, \cdots, \overset{d_{\infty, A}}{\overbrace{0, \cdots, 0}}).
    $$
    
    \item For $A \in \M_n(\Z_p)$ and $k \in \Z_{\ge1}$, $\cok_{\Z_p / p^k \Z_p}(A)$ denotes the cokernel of $A/p^kA$ as a $\Z_p/p^k \Z_p$-module. It is given as $\cok_{\Z_p / p^k \Z_p}(A) \cong \prod_{r=1}^{k-1}(\Z_p/p^r\Z_p)^{d_{r, A}} \times (\Z_p/p^k\Z_p)^{\sum_{r \in \extz_{\ge k}} d_{r, A}}$. Let $\overline{A} := A/pA \in \M_n(\F_p)$ be the reduction modulo $p$ of $A$.

    \item For $A \in \M_{n \times n'}(\Z_p)$ and $k \ge 1$, denote $p^k \mid A$ if each entry of $A$ is divisible by $p^k$ and $p^k \nmid A$ otherwise. 

    \item Denote $\calB_{m}:=\{(n;H_1,\cdots,H_m)\in\Z_{\ge0}\times\calM_{\Z_p}^m:n\ge\rank_{\F_p}(H_i/pH_i)$ for every $1\le i\le m\}$. For an element $(n;H_1,\cdots,H_m) \in \calB_{m}$, write $H_i\cong\prod_{r\in\extz_{\ge1}}(\Z_p/p^r\Z_p)^{d_{r,i}}$ and $d_{0,i}:=n-\sum_{r\in\extz_{\ge1}}d_{r,i}$ for each $i$. 
    If a polynomial $P(t)\in\M_n(\Z_p)[t]$ satisfies $\cok(P(x_i))\cong H_i$ for each $i$, then we have $d_{r,P(x_i)}=d_{r,i}$ for every $r\in\extz_{\ge0}$ and $1\le i\le m$.

    \item The \textit{sum} of two elements in $\calB_{m}$ is defined by the operation
    \begin{equation} \label{eq2new1}
    (n;H_1,\cdots,H_m) + (n';H'_1,\cdots,H'_m)
    := (n+n';H_1 \times H_1', \cdots, H_m \times H_m').
    \end{equation}
\end{itemize}

\subsection{The sets \texorpdfstring{$\calC_{X_m,l,k}$}{\calC{Xmlk}}} \label{Sub21}

For $A \in \M_n(\Z_p)$ with $n_1 = n - d_{0, A}$, the Smith normal form of $A$ gives $U,V \in \GL_n(\Z_p)$ such that 
$$
UAV=\begin{pmatrix} pA' & O \\ O&I_{n-n_1} \end{pmatrix}
$$ 
for some $A' \in \M_{n_1}(\Z_p)$. For $x \in \Z_p$ and
$\displaystyle UV = \begin{pmatrix}
B_1 & B_2 \\
B_3 & B_4 \\
\end{pmatrix} \in \GL_{n_1+(n-n_1)}(\Z_p)$, we have
\begin{equation*}
\begin{split}
\cok(A+pxI)  & \cong \cok(UAV+pxUV) \\
& = \cok \left ( \begin{pmatrix}
pA'+pxB_1 & pxB_2 \\
pxB_3 & I_{n-n_1}+pxB_4 \\
\end{pmatrix} \right ) \\
& \cong \cok(p(A'+xB_1)-(pxB_2)(I_{n-n_1}+pxB_4)^{-1}(pxB_3)) \\
& = \cok \left ( p ( A' + xB_1 - \sum_{d=0}^{\infty} p^{d+1}x^{d+2}B_2(-B_4)^d B_3 ) \right ).
\end{split}    
\end{equation*}
For $A_0 = A'$, $A_1 = B_1$ and $A_r = -B_2(-B_4)^{r-2}B_3$ ($r \ge 2$), we have
\begin{equation*}
\begin{split}
\cok(A+pxI)
& \cong \cok \left ( pP^{(1)}_{A_0, A_1, \cdots}(x) \right ) \\
& := \cok \left ( p (A_0 + \sum_{d=1}^{\infty} p^{d-1}x^{d}A_{d}) \right ).
\end{split}    
\end{equation*}
In this case, we have
\begin{equation} \label{eq2a}
d_{r,A+pxI}=d_{r-1, P^{(1)}_{A_0, A_1, \cdots}(x)}
\end{equation}
for every $r\in\extz_{\ge1}$. Thus if an inequality holds for the numbers $d_{r-1,P^{(1)}_{A_0,A_1,\cdots}(x)}$ for any $A_0,A_1,\cdots\in\M_{n_1}(\Z_p)$, then the same inequality holds for the numbers $d_{r,A+pxI}$. This observation motivates us to introduce the following variant of Problem \ref{prob1f}.

\begin{definition} \label{def2a}
A polynomial in $\M_n(\Z_p)[t]$ is called an $l$\textit{-th integral of ascending polynomial}, or just an $l$\textit{-th integral} if it is of the form
$$
A_0+tA_1+\cdots+t^lA_l+pt^{l+1}A_{l+1}+\cdots+p^rt^{l+r}A_{l+r}
$$
for some $r \in \Z_{\ge 0}$ and $A_0, \cdots, A_{l+r} \in \M_n(\Z_p)$. For $l=\infty$, every polynomial in $\M_n(\Z_p)[t]$ is an $\infty$-th integral.
\end{definition}

\begin{problem} \label{prob2b}
For given $X_m$ and $l, k \in \extz_{\ge0}$, determine the set 
$$
\calC_{X_m,l,k} := \begin{Bmatrix}
(n;H_1, \cdots, H_m) \in \calB_m : \text{there exists an }l\text{-th integral } P(t) \in \M_n(\Z_p)[t]\\ \text{ such that } \cok_{\Z_p/p^k\Z_p}(P(x_i)) \cong H_i/p^kH_i \text{ for each } 1 \le i \le m
\end{Bmatrix}.
$$
\end{problem}

Now we provide basic properties of the sets $\calC_{X_m,l,k}$. For $H \in \calM_{\Z_p}$, denote $s(H) := \rank_{\F_p}(H/pH)$. Then we have $s(H_i) = \sum_{r \in \extz_{\ge 1}} d_{r,i} =  n-d_{0,i}$ for $(n; H_1, \cdots, H_m) \in \calB_m$.

\begin{proposition} \label{prop2c}
\begin{enumerate}
    \item \label{item2c1} The set $\calC_{X_m,l,k}$ is closed under the sum in $\calB_m$. In particular, $\calC_{X_m,l,k}$ is a monoid under the operation (\ref{eq2new1}) with an identity $(0;1,\cdots,1)$.
    \item \label{item2c2} For $l,l',k,k'\in\extz_{\ge0}$ and $x_0 \in \Z_p$, we have the followings:
    \begin{enumerate}
        \item \label{item2c21} $\calC_{X_m,l,k}\subset\calC_{X_m,l',k'}$ for $l\le l'$ and $k\ge k'$,

        \item \label{item2c22} $\calC_{X_m,l,\infty}=\calC_{X_m,l,m-l-1}$ for $l<m$,

        \item \label{item2c23} $\calC_{X_m,l,k}=\calC_{X_m-x_0,l,k}$ for $X_m - x_0 := \left \{ x-x_0 : x \in X_m \right \}$.
    \end{enumerate}
    
    \item \label{item2c3} {\small $\calC_{X_m}=\begin{Bmatrix}
(H_1,\cdots,H_m)\in\calM_{\Z_p}^m:\text{there exists }n\in\Z_{\ge1} \\
\text{such that } (n;H_1,\cdots,H_m)\in\calC_{X_m,0,\infty}
\end{Bmatrix}=\begin{Bmatrix}
(H_1,\cdots,H_m)\in\calM_{\Z_p}^m: s(H_1)=\cdots=s(H_m)=s\\\text{ and }(s;pH_1,\cdots,pH_m)\in\calC_{X_m,1,\infty}
\end{Bmatrix}$. }%

    \item \label{item2c4} For every $l<m$, the map 
    $$\varphi_{l,k}:
    \begin{Bmatrix}
    (n;H_1,\cdots,H_m)\in\calC_{X_m,l,k+1}: d_{0,1}=\cdots=d_{0,m}=0
    \end{Bmatrix} \rightarrow \calC_{X_m,l+1,k}
    $$
    ($(n;H_1,\cdots,H_m) \mapsto (n;pH_1,\cdots,pH_m)$) is well-defined and a bijection.
\end{enumerate}

\begin{proof}
\begin{enumerate}
    \item[(1)] For every $(n;H_1,\cdots,H_m),(n';H'_1,\cdots,H'_m) \in \calC_{X_m,l,k}$, there are $l$-th integrals $P(t)\in\M_n(\Z_p)[t]$ and $P'(t)\in\M_{n'}(\Z_p)[t]$ such that $\cok_{\Z_p/p^k\Z_p}(P(x_i))\cong H_i$ and $\cok_{\Z_p/p^k\Z_p}(P'(x_i))\cong H'_i$ for each $i$. Then the \textit{concatenation} of $P(t)$ and $P'(t)$ given by
    $$
    Q(t) := \begin{pmatrix}P(t)&O\\O&P'(t)\end{pmatrix} \in \M_{n+n'}(\Z_p)[t]
    $$ 
    is also an $l$-th integral and $\cok_{\Z_p/p^k\Z_p}(Q(x_i)) \cong H_i \times H_i'$ for each $i$. Thus we have
$$
(n+n'; H_1 \times H_1', \cdots, H_m \times H_m') \in \calC_{X_m,l,k}.
$$
    \item[(2a)] It follows from the facts that an $l$-th integral is also an $l'$-th integral and 
    $$
    \cok_{\Z_p/p^{k'}\Z_p}(A) \cong \cok_{\Z_p/p^{k}\Z_p}(A)/p^{k'}\cok_{\Z_p/p^{k}\Z_p}(A).
    $$
    
    \item[(2b)] The inclusion $\subset$ holds by (\ref{item2c21}). Now suppose that $(n;H_1,\cdots,H_m)\in\calC_{X_m,l,m-l-1}$. Then there exists an $l$-th integral $P(t)\in\M_n(\Z_p)$ such that $\cok_{\Z_p/p^{m-l-1}\Z_p}(P(x_i))\cong H_i/p^{m-l-1}H_i$ for each $i$. The Smith normal form of $P(x_i)$ gives $U_i,V_i\in\GL_n(\Z_p)$ such that 
    $$
    P(x_i)=U_i \diag(\overset{d_{0, P(x_i)}}{\overbrace{1, \cdots, 1}}, \cdots, \overset{d_{m-l-1, P(x_i)}}{\overbrace{p^{m-l-1}, \cdots, p^{m-l-1}}}, \, \overset{d_{m-l, P(x_i)}}{\overbrace{p^{m-l}, \cdots, p^{m-l}}}, \cdots, \overset{d_{\infty, P(x_i)}}{\overbrace{0, \cdots, 0}}) V_i.
    $$
    Define $\displaystyle L_{m,i}(t):=\prod_{1\le j \le m, j \neq i} \frac{t-x_j}{x_i-x_j} \in \Z_p[t]$, $D_{m-l-1,i}:=\sum_{r\in\extz_{\ge m-l-1}}d_{r,P(x_i)}$ and
    \begin{equation*}
    Q(t) := P(t)+\sum_{i=1}^mL_{m,i}(t) U_i \diag(\overset{n-D_{m-l-1,i}}{\overbrace{0, \cdots, 0}}, p^{a_{i,1}}-b_{i,1},\cdots,p^{a_{i,D_{m-l-1,i}}}-b_{i,D_{m-l-1,i}}) V_i
    \end{equation*}
    where $(b_{i,1}, \cdots, b_{i,D_{m-l-1, i}}) := (\overset{d_{m-l-1, P(x_i)}}{\overbrace{p^{m-l-1}, \cdots, p^{m-l-1}}}, \cdots, \overset{d_{\infty, P(x_i)}}{\overbrace{0, \cdots, 0}})$ and $a_{i,j} \in \extz_{\ge m-l-1}$ for $1\le i\le m$ and $1\le j\le D_{m-l-1,i}$. Then $Q(t)$ is also an $l$-th integral as $p^{m-l-1} \mid p^{a_{i,j}}-b_{i,j}$ for each $i, j$ and $L_{m,i}(t)$ is of degree $m-1$, while
    \begin{equation*}
    \begin{split}
    Q(x_i)&=P(x_i)+U_i \diag(\overset{n-D_{m-l-1,i}}{\overbrace{0, \cdots, 0}}, p^{a_{i,1}}-b_{i,1},\cdots,p^{a_{i,D_{m-l-1,i}}}-b_{i,D_{m-l-1,i}}) V_i \\
    &=U_i \diag(\overset{d_{0, P(x_i)}}{\overbrace{1, \cdots, 1}}, \cdots, \overset{d_{m-l-2, P(x_i)}}{\overbrace{p^{m-l-2}, \cdots, p^{m-l-2}}}, p^{a_{i,1}},\cdots,p^{a_{i,D_{m-l-1,i}}})V_i
    \end{split}
    \end{equation*}
    for each $i$. Now we can choose $a_{i,j} \in \extz_{\ge m-l-1}$ ($1\le i\le m$, $1\le j\le D_{m-l-1,i}$) such that $\cok(Q(x_i))\cong H_i$ for each $i$, which implies that $(n;H_1,\cdots,H_m)\in\calC_{X_m,l,\infty}$.

    \item[(2c)] $P(t) \in \M_n(\Z_p)[t]$ is an $l$-th polynomial if and only if $P_1(t) := P(t+x_0) \in \M_n(\Z_p)[t]$ is an $l$-th polynomial and $\cok_{\Z_p/p^k\Z_p}(P(x_i)) \cong \cok_{\Z_p/p^k\Z_p}(P_1(x_i-x_0))$ so we have $\calC_{X_m,l,k} = \calC_{X_m-x_0,l,k}$.
    
    \item[(3)] Consider the sets
    \begin{align*}
     S_1 & := \begin{Bmatrix}
(H_1,\cdots,H_m)\in\calM_{\Z_p}^m: \exists n \in \Z_{\ge 1} \\
\text{s.t. } (n;H_1,\cdots,H_m)\in\calC_{X_m,0,\infty}
\end{Bmatrix},  \\
S_2 & := \begin{Bmatrix}
(H_1,\cdots,H_m)\in\calM_{\Z_p}^m: s(H_1)=\cdots=s(H_m)=s\\\text{ and }(s;pH_1,\cdots,pH_m)\in\calC_{X_m,1,\infty}
\end{Bmatrix}.
    \end{align*}

    \begin{itemize}
        \item ($\calC_{X_m}=S_1$) The inclusion $\subset$ follows from the definition. Suppose that $(n;H_1,\cdots,H_m) \in \calC_{X_m,0,\infty}$ so that there exists a zeroth integral $P(t)\in\M_n(\Z_p)[t]$ such that $\cok(P(x_i))\cong H_i$ for each $i$. Let
        \begin{equation*}
        \begin{split}
        Q(t) & :=P(t)+p^{d+m}t^{d}(t-x_1)\cdots(t-x_m)I_n \\ &=A_0+ptA_1+\cdots+p^{d+m-1}t^{d+m-1}A_{d+m-1}+p^{d+m}t^{d+m}I_n
        \end{split}
        \end{equation*}
        for $d = \deg(P)$ and $A_0,\cdots,A_{d+m-1}\in\M_n(\Z_p)$. The rational canonical form of $Q(t)$, i.e. 
        $$
        A:=\begin{pmatrix}&&&A_0\\-I_n&&&A_1\\&\ddots&&\vdots\\&&-I_n&A_{d+m-1}\end{pmatrix} \in \M_{(d+m)n}(\Z_p)
        $$
        satisfies $\cok(A+px_iI)\cong \cok(Q(x_i)) \cong \cok(P(x_i)) \cong H_i$ for each $i$ so we have $(H_1,\cdots,H_m) \in \calC_{X_m}$.

        \item ($\calC_{X_m}\subset S_2$) Suppose that $(H_1,\cdots,H_m)\in\calC_{X_m}$ so that there exists $A\in\M_n(\Z_p)$ for some $n \ge 1$ such that $\cok(A+px_iI)\cong H_i$ for each $i$. Then $s(H_i) = \rank_{\F_p}(H_i/pH_i) = \rank_{\F_p}(\cok(\overline{A}))$ so we have $s(H_1) = \cdots = s(H_m)=s$. By the equation (\ref{eq2a}), there exists a first integral $P(t) \in \M_s(\Z_p)[t]$ such that $\cok(P(x_i)) \cong pH_i$ for each $i$, which implies that $(s;pH_1,\cdots,pH_m)\in\calC_{X_m,1,\infty}$.

        \item ($S_2\subset S_1$) Suppose that $(H_1,\cdots,H_m)\in S_2$ so that $s(H_1)=\cdots=s(H_m)=s$ and $(s;pH_1,\cdots,pH_m)\in\calC_{X_m,1,\infty}$. Let $P(t)\in\M_s(\Z_p)[t]$ be a first integral such that $\cok(P(x_i))\cong pH_i$ for each $i$. Since $p P(t)$ is a zeroth integral and $\cok(p P(x_i)) \cong H_i$ for each $i$, we have $(s;H_1,\cdots,H_m)\in\calC_{X_m,0,\infty}$.
    \end{itemize}

     \item[(4)] Assume that $(n;H_1,\cdots,H_m)\in\calC_{X_m,l,k+1}$ with $d_{0,1}=\cdots=d_{0,m}=0$. Then there exists an $l$-th integral $P(t) \in \M_n(\Z_p)[t]$ such that $\cok_{\Z_p/p^{k+1}\Z_p}(P(x_i))\cong H_i/p^{k+1}H_i$ and $d_{0,P(x_i)}=0$ (so $p \mid P(x_i)$) for each $i$. This implies that $P(t) = (t-x_1)\cdots(t-x_m)Q(t)+pP_1(t)$ for some $Q(t), P_1(t) \in \M_n(\Z_p)[t]$ such that $\deg((t-x_1)\cdots(t-x_m)Q(t)) \le l$ and $P_1(t)$ is an $(l+1)$-th integral. 
     Since we have $P(x_i) = pP_1(x_i)$ for each $i$, we have $d_{r,P_1(t)}=d_{r+1,P(t)}$ for every $r\in\extz_{\ge1}$ so $\cok_{\Z_p/p^k\Z_p}(P_1(x_i))=pH_i/p^{k+1}H_i$ for each $i$. Thus the map $\varphi_{l,k}$ is well-defined.

    Now assume that $(n;H_1,\cdots,H_m)\in\calC_{X_m,l+1,k}$. Then there exists an $(l+1)$-th integral $P(t)\in\M_n(\Z_p)[t]$ such that $\cok_{\Z_p/p^k\Z_p}(P(x_i))\cong H_i/p^kH_i$ for each $i$. Since $pP(t)$ is an $l$-th integral which satisfies $d_{0,p P(t)}=0$ and $d_{r,P(t)}=d_{r+1,p P(t)}$ for every $r\in\extz_{\ge0}$, the map
    $$\psi_{l,k} :\calC_{X_m,l+1,k} \rightarrow
    \begin{Bmatrix}
    (n;H_1,\cdots,H_m)\in\calC_{X_m,l,k+1}: d_{0,1}=\cdots=d_{0,m}=0
    \end{Bmatrix}
    $$
    given by $(n;H_1,\cdots,H_m) \mapsto (n;\cok(p P(x_1)),\cdots,\cok(p P(x_m)))$ is the inverse of $\varphi_{l,k}$. \qedhere
\end{enumerate}
\end{proof}
\end{proposition}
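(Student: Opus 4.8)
The plan is to establish the four assertions by explicit constructions with matrix polynomials: \emph{concatenation} (block-diagonal sums) for the monoid structure in (1); comparison of degrees and $p$-divisibilities of coefficients for (2a) and (2c); Lagrange interpolation for the stabilization statement (2b); the block-companion (``rational canonical form'') linearization for (3); and the shift operation $P(t)\leftrightarrow pP(t)$ on integrals, together with the invariant-factor bookkeeping recorded in (\ref{eq2a}), for (3) and (4). For (1), if $l$-th integrals $P(t)\in\M_n(\Z_p)[t]$ and $P'(t)\in\M_{n'}(\Z_p)[t]$ witness membership of two tuples, then the block-diagonal polynomial with diagonal blocks $P(t)$ and $P'(t)$ is again an $l$-th integral and its value at $x_i$ has cokernel $\cok(P(x_i))\times\cok(P'(x_i))$ over $\Z_p/p^k\Z_p$; the case $n=0$ supplies the identity $(0;1,\dots,1)$, and the remaining monoid axioms are inherited from $\calB_m$. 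Assertion (2a) is immediate --- an $l$-th integral is an $l'$-th integral for $l\le l'$, and $\cok_{\Z_p/p^{k'}\Z_p}$ is a quotient of $\cok_{\Z_p/p^{k}\Z_p}$ for $k'\le k$ --- and (2c) holds because $t\mapsto t+x_0$ preserves the class of $l$-th integrals (the coefficient of $t^i$ in $P(t+x_0)$ is a $\Z_p$-combination of the coefficients of $t^j$, $j\ge i$, which are divisible by $p^{j-l}$, hence by $p^{i-l}$, once $i>l$) and matches the defining condition for $X_m$ with that for $X_m-x_0$.

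For (2b) the inclusion $\subseteq$ is (2a), so I would start from an $l$-th integral $P$ realizing the $H_i$ only modulo $p^{m-l-1}$, diagonalize $P(x_i)=U_i\diag(\cdots)V_i$, and correct the ``tail'' of each $P(x_i)$ separately using the degree-$(m-1)$ Lagrange basis polynomials $L_{m,i}(t)=\prod_{j\ne i}\frac{t-x_j}{x_i-x_j}$. Adding $\sum_i L_{m,i}(t)\,U_i\diag(0,\dots,0,\,p^{a_{i,1}}-b_{i,1},\dots)\,V_i$ alters only the invariant factors of index $\ge m-l-1$ at $x_i$, leaves the values at $x_j$ ($j\ne i$) untouched, and remains an $l$-th integral, since each correction term is divisible by $p^{m-l-1}$, has degree $m-1$, and $m-l-1\ge i-l$ for all $i\le m-1$; choosing the free exponents $a_{i,j}\in\extz_{\ge m-l-1}$ then realizes each $H_i$ exactly, so the tuple lies in $\calC_{X_m,l,\infty}$.

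For (3) I would establish $\calC_{X_m}=S_1$, $\calC_{X_m}\subseteq S_2$ and $S_2\subseteq S_1$, where $S_1,S_2$ denote the first and second displayed sets. Since $A+ptI_n$ is a zeroth integral for any $A\in\M_n(\Z_p)$, one half of $\calC_{X_m}=S_1$ is trivial; for the other half, given a zeroth integral $P$ of degree $d$ with $\cok(P(x_i))\cong H_i$, pass to $Q(t)=P(t)+p^{d+m}t^d(t-x_1)\cdots(t-x_m)I_n$, which is still a zeroth integral but has leading coefficient $p^{d+m}I_n$, so the matrix polynomial $\sum_j s^j A_j$ obtained by writing $Q(t)=\sum_j p^j t^j A_j$ is monic in $s$ and its block-companion matrix $A\in\M_{(d+m)n}(\Z_p)$ satisfies $\cok(A+px_iI)\cong\cok(Q(x_i))\cong\cok(P(x_i))\cong H_i$. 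For $\calC_{X_m}\subseteq S_2$, reducing $A+px_iI\equiv A\pmod p$ forces $s(H_1)=\cdots=s(H_m)=s:=n-\rank_{\F_p}\overline A$, and (\ref{eq2a}) then produces a first integral in $\M_s(\Z_p)[t]$ realizing the $pH_i$; and for $S_2\subseteq S_1$, if $P\in\M_s(\Z_p)[t]$ is a first integral with $\cok(P(x_i))\cong pH_i$ and $s(H_i)=s$, then $pP(t)$ is a zeroth integral for which the constraint $s(H_i)=s$ is precisely what pins the multiplicity of the $\Z_p/p\Z_p$-summand of $\cok(pP(x_i))$, so $\cok(pP(x_i))\cong H_i$.

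For (4), the combinatorial core, I would show $\varphi_{l,k}$ (``multiply the modules by $p$'') is a bijection by exhibiting its inverse $\psi_{l,k}\colon(n;H_1,\dots,H_m)\mapsto(n;\cok(pP(x_1)),\dots,\cok(pP(x_m)))$, with $P$ any $(l+1)$-th integral witnessing membership in $\calC_{X_m,l+1,k}$. For well-definedness of $\varphi_{l,k}$, take an $l$-th integral $P$ in the domain, so $p\mid P(x_i)$ for all $i$; its degree-$\le l$ truncation $P^{\le l}$ reduces mod $p$ to a polynomial of degree $\le l<m$ vanishing at the $m$ distinct residues $\overline{x_i}$, hence is entrywise zero, so $P^{\le l}=pR(t)$ with $\deg R\le l$ and therefore $P(t)=pP_1(t)$ with $P_1$ an $(l+1)$-th integral; the Smith-normal-form shift $d_{r,P(x_i)}=d_{r-1,P_1(x_i)}$ then converts $\cok_{\Z_p/p^{k+1}\Z_p}(P(x_i))\cong H_i/p^{k+1}H_i$ into $\cok_{\Z_p/p^{k}\Z_p}(P_1(x_i))\cong (pH_i)/p^{k}(pH_i)$. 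That $pP(t)$ is an $l$-th integral with $d_{0,pP(x_i)}=0$ makes $\psi_{l,k}$ land in the domain, and the same index shift shows $\varphi_{l,k}\circ\psi_{l,k}$ and $\psi_{l,k}\circ\varphi_{l,k}$ are the identity. The step I expect to be most delicate is not any single construction but maintaining consistent invariant-factor bookkeeping under the two shift operations $P\mapsto pP$ and its partial inverse throughout (3) and (4) --- in particular verifying that the rank hypothesis $s(H_i)=s$ (equivalently $d_{0,i}=0$) is exactly what makes the shift reversible --- together with checking, at each interpolation or enlargement step, that the modified polynomial is still an $l$-th (or zeroth) integral, which is where the degree bounds $\deg L_{m,i}=m-1$ and $\deg(t^d\prod_i(t-x_i))=d+m$ must be weighed against the divisibility exponents.
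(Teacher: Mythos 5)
Your proposal is correct and follows essentially the same route as the paper's proof in every part: block-diagonal concatenation for (1), degree/divisibility comparison for (2a) and the translation $t\mapsto t+x_0$ for (2c), Lagrange-basis correction of the Smith normal forms for (2b), the companion-matrix linearization of the modified zeroth integral $Q(t)=P(t)+p^{d+m}t^d\prod_i(t-x_i)I_n$ together with the $p$-shift $P\leftrightarrow pP$ for (3), and the bijection $\varphi_{l,k},\psi_{l,k}$ via the shift for (4). The only (cosmetic) difference is in the well-definedness of $\varphi_{l,k}$: you argue directly that the degree-$\le l$ truncation $\overline{P^{\le l}}$ is a matrix polynomial over $\F_p$ of degree $\le l<m$ with $m$ distinct roots, hence zero, so $P=pP_1$ outright; the paper writes the same fact as $P=(t-x_1)\cdots(t-x_m)Q(t)+pP_1(t)$ with the degree constraint on the first summand, which (since $l<m$) forces $Q=0$ and amounts to the same conclusion.
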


As an example, we determine the elements of the set $\calC_{X_m,l,\infty}$ which are of the form $(1; H_1, \cdots, H_m)$. This result will be frequently used in the proof of Theorem \ref{thm1h}. We note that if $(n;H_1,\cdots,H_m)\in\calC_{X_m,l,k}$, then $(n;H_{\sigma(1)},\cdots,H_{\sigma (m)})\in\calC_{X_m,l,k}$ for any permutation $\sigma \in S_m$.

\begin{example} \label{ex2d} 
For an element $(1; H_1, \cdots, H_m) \in \calC_{X_m,l,\infty}$, there exists an $l$-th integral $P(t) \in \Z_p[t]$ such that $\Z_p/P(x_i)\Z_p \cong H_i$ for each $i$. If $P(t)=0$, then $H_1 = \cdots = H_m = \Z_p$. If $P(t)$ is not identically zero, then there uniquely exists $r \in \Z_{\ge 0}$ such that $P(t) = p^r Q(t)$ for some $(l+r)$-th integral $Q(t) \in \Z_p[t]$ whose reduction modulo $p$ is not identically zero in $\F_p[t]$. Since $Q(t) \equiv 0 \,\, (\bmod \; p)$ has at most $l+r$ roots modulo $p$, the number of $1 \le i \le m$ such that $H_i = \Z_p/p^r\Z_p$ is at least $m-(l+r)$. Conversely, for every integer $0 \le r \le m-l$ and $b_1, \cdots, b_{r'} \in \extz_{\ge r+1}$ with $r'\le l+r$, an $l$-th integral
$$
P(t) = p^r\prod_{i=1}^{r'} (t-x_i) + \sum_{i=1}^{r'} p^{b_i} L_{r',i}(t)\in \Z_p[t]
$$
satisfies $\Z_p/P(x_i)\Z_p \cong \Z_p/p^{b_i}\Z_p$ for $1 \le i \le r'$ and $\Z_p/P(x_i)\Z_p \cong \Z_p/p^r\Z_p$ for $i>l+r$. (The polynomials $L_{m,i}(t)$ are defined as in the proof of Proposition \ref{prop2c}(\ref{item2c22}).) 

Now we deduce that $(1; H_1, \cdots, H_m) \in \calC_{X_m,l,\infty}$ if and only if $(H_1, \cdots, H_m)$ is a permutation of
$$
(\Z_p/p^{b_1}\Z_p,\cdots,\Z_p/p^{b_{r+l}}\Z_p,\overbrace{\Z_p/p^r\Z_p,\cdots,\Z_p/p^r\Z_p}^{m-r-l})
$$
for some $0 \le r \le m-l$ and $b_1, \cdots, b_{r+l}\in\extz_{\ge r}$. In particular, we have 
$$
(\Z_p/p^{b_1}\Z_p,\cdots,\Z_p/p^{b_{r}}\Z_p,\overbrace{\Z_p/p^r\Z_p,\cdots,\Z_p/p^r\Z_p}^{m-r}) \in \calC_{X_m}
$$
for every $0 \le r \le m$ and $b_1, \cdots, b_r \in \extz_{\ge r}$ by Proposition \ref{prop2c}(\ref{item2c3}).
\end{example}
\subsection{Proof of Theorem \ref{thm1h}: the case \texorpdfstring{$m \le 3$}{m le 3}} \label{Sub22}

The case $m=1$ is trivial. When $m=2$, we have $\calC_{X_2} \subset \left \{ (H_1, H_2) \in \calM_{\Z_p}^2 : s_1=s_2 \right \}$ (see the paragraph after Theorem \ref{thm1h}). Conversely, every element $(H_1, H_2) \in \calM_{\Z_p}^2$ such that $s_1=s_2=s$ is of the form 
$$
\left ( \prod_{j=1}^{s} \Z_p/p^{a_j}\Z_p, \, \prod_{j=1}^{s} \Z_p/p^{b_j}\Z_p \right ) \,\, (a_j, b_j \in \extz_{\ge 1}).
$$
Since the set $\calC_{X_2}$ is closed under finite direct product, to prove Theorem \ref{thm1h} for $m=2$ it is enough to show that $(\Z_p/p^a\Z_p, \Z_p/p^b\Z_p) \in \calC_{X_2}$ for every $a, b \in \extz_{\ge 1}$. We already proved this in Example \ref{ex2d}.

Now we consider the case $m=3$. First we prove that every element $(H_1,H_2,H_3) \in \calC_{X_3}$ satisfies the condition $2d_{1,i} \le D_1$. By the equation (\ref{eq2a}), it is enough to show that the numbers $d_{0, P^{(1)}_{A_0, A_1, \cdots}(x_i)}$ ($1 \le i \le 3$) satisfy the triangle inequality for every $A_0, A_1, \cdots\in\M_{n_1}(\Z_p)$. The congruence $P^{(1)}_{A_0, A_1, \cdots}(x_i) \equiv A_0+x_iA_1 \,\, (\bmod \; p)$ implies that $d_{0, P^{(1)}_{A_0, A_1, \cdots}(x_i)} = d_{0, A_0+x_iA_1} = \dim_{\F_p} N(\overline{A_0+x_iA_1})$, where $N(\overline{A_0+x_iA_1})$ denotes the null space of $\overline{A_0+x_iA_1} \in \M_n(\F_p)$. By the relation
$$
(x_2-x_3)(A_0+x_1A_1) + (x_3-x_1)(A_0+x_2A_1) + (x_1-x_2)(A_0+x_3A_1) = O,
$$
the numbers $\dim_{\F_p} N(\overline{A_0+x_iA_1})$ ($1 \le i \le 3$) satisfy the triangle inequality. We conclude that 
$$
\calC_{X_3} \subset \left \{ (H_1,H_2,H_3) \in \calM_{\Z_p}^3 : s_1=s_2=s_3 \text{ and } 2d_{1, i} \le D_1 \;\; (1 \le i \le 3) \right \}.
$$

It remains to show that every $(H_1,H_2,H_3) \in \calM_{\Z_p}^3$ satisfying the conditions $s_1=s_2=s_3=s$ and $2d_{1, i} \le D_1$ ($1 \le i \le 3$) is an element of $\calC_{X_3}$. For each $i$, let $c_i:=d_{1,i}$ and $H_i \cong (\Z_p/p\Z_p)^{c_i} \times \prod_{j=c_i+1}^{s} \Z_p / p^{b_{i, j}} \Z_p$ for some $b_{i, c_i+1}, \cdots, b_{i, s} \in \extz_{\ge 2}$. We may assume that $c_1 =\underset{1 \le i \le 3}{\max} c_i$. Then $c_2+c_3-c_1 \ge 0$ and
\begin{equation} \label{eq2b}
\begin{split}
(H_1,H_2,H_3) & \cong (\Z_p/p\Z_p, \Z_p/p\Z_p, \Z_p/p\Z_p)^{c_2+c_3-c_1} \\
& \times \prod_{j=1}^{c_1-c_3} (\Z_p/p\Z_p, \Z_p/p\Z_p, \Z_p/p^{b_{3, j+c_3}}\Z_p) \\
& \times \prod_{j=1}^{c_1-c_2} (\Z_p/p\Z_p, \Z_p/p^{b_{2, j+c_2}}\Z_p, \Z_p/p\Z_p) \\
& \times \prod_{j=c_1+1}^{s} (\Z_p/p^{b_{1,j}}\Z_p, \Z_p/p^{b_{2,j}}\Z_p, \Z_p/p^{b_{3,j}}\Z_p).
\end{split}    
\end{equation}
Each term in the right-hand side of the equation (\ref{eq2b}) is contained in $\calC_{X_3}$ by Example \ref{ex2d} and the set $\calC_{X_3}$ is closed under finite direct product, we conclude that $(H_1,H_2,H_3) \in \calC_{X_3}$. This finishes the proof of Theorem \ref{thm1h} for $m \le 3$.

\section{Proof of Theorem \ref{thm1h} for \texorpdfstring{$m=4$}{m=4}} \label{Sec3}

In this section, we prove Theorem \ref{thm1h} for $m=4$. First we prove a necessary condition for an element of $\calC_{X_4}$ using a reduction procedure. The purpose of a reduction procedure is to reduce the size of a matrix $n$ without information loss of $(H_1,\cdots,H_m)$ and to extract inequalities using Proposition \ref{prop2c}. After that, we prove that the same condition is also a sufficient condition for an element of $\calC_{X_4}$ using zone theory. Throughout this section, we assume that $x_1 = 0$. (We may assume this by Proposition \ref{prop2c}(\ref{item2c23})).

\subsection{A reduction procedure} \label{Sub31}
We begin by clarifying the relation between zeroth and first integrals. Recall that if $(n;H_1,\cdots,H_m)\in\calC_{X_m,0,k}$ for $k\in\extz_{\ge1}$, then $d_{0,1}=\cdots=d_{0,m}$ where $d_{0,i} = n-\sum_{r\in\extz_{\ge1}}d_{r,i}$ for each $i$.

\begin{proposition} \label{prop3a}
If $(n;H_1,\cdots,H_m)\in\calC_{X_m,0,k}$ and $d_{0,1}=\cdots=d_{0,m} > 0$, then $(n-1;H_1,\cdots,H_m)\in\calC_{X_m,0,k}$.
\end{proposition}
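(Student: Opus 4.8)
The plan is to peel off one unit from a witnessing zeroth integral by means of a Schur complement. Let $P(t)=A_0+ptA_1+\cdots+p^rt^rA_r\in\M_n(\Z_p)[t]$ be a zeroth integral with $\cok_{\Z_p/p^k\Z_p}(P(x_i))\cong H_i/p^kH_i$ for every $i$. Since $\overline{P(x_i)}=\overline{A_0}$ for all $i$, reducing this isomorphism modulo $p$ yields $\cok_{\F_p}(\overline{A_0})\cong H_i/pH_i$, so $\rank_{\F_p}(\overline{A_0})=n-s(H_i)=d_{0,i}>0$; hence some entry of $A_0$ is a unit. Multiplying $P(t)$ on the left and on the right by suitable permutation matrices preserves the zeroth-integral form and changes none of the cokernels $\cok(P(x_i))$, so I may assume the $(1,1)$-entry of $A_0$ is a unit.

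Next I would write $P(t)$ in $1+(n-1)$ block form, $P(t)=\left(\begin{smallmatrix}\alpha(t)&\beta(t)\\\gamma(t)&\delta(t)\end{smallmatrix}\right)$. The scalar $\alpha(t)\in\Z_p[t]$ has unit constant term $(A_0)_{11}$ and all higher coefficients divisible by $p$, so it is a unit in $\Z_p[[t]]$; put $R(t):=\delta(t)-\gamma(t)\alpha(t)^{-1}\beta(t)\in\M_{n-1}(\Z_p)[[t]]$. Specializing the usual Schur-complement row and column operations at each $t=x_i$ (legitimate because $\alpha(x_i)\equiv(A_0)_{11}\pmod p$ is a unit of $\Z_p$) gives $\cok(P(x_i))\cong\cok(\alpha(x_i))\times\cok(R(x_i))\cong\cok(R(x_i))$, and hence $\cok_{\Z_p/p^k\Z_p}(R(x_i))\cong H_i/p^kH_i$, for every $i$.

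The one step that is not mere bookkeeping is to check that $R(t)$ is again of zeroth-integral type: the coefficient of $t^j$ in $R(t)$ should be divisible by $p^j$ for all $j\ge1$. I would record this by calling a matrix power series $\sum_jf_jt^j$ \emph{tame} if $p^j\mid f_j$ for every $j\ge1$, and then verifying: (i) $\alpha(t),\beta(t),\gamma(t),\delta(t)$ are tame, because $P(t)$ is a zeroth integral; (ii) the inverse of a tame series with unit constant term is tame, by induction on the standard recursion for the coefficients of $\alpha(t)^{-1}$; (iii) tameness is preserved under sums and products of matrix power series, by a term-by-term $p$-adic valuation count. Granting this, $R(t)$ is tame, so each $R(x_i)$ converges $p$-adically and the tail $\sum_{j\ge k}R_jt^j$ is $\equiv0\pmod{p^k}$; thus the truncation $R^\circ(t):=\sum_{j=0}^{k-1}R_jt^j$ is a bona fide zeroth integral in $\M_{n-1}(\Z_p)[t]$ (write $R_j=p^jA_j'$ with $A_j'\in\M_{n-1}(\Z_p)$) with $R^\circ(x_i)\equiv R(x_i)\pmod{p^k}$, so $\cok_{\Z_p/p^k\Z_p}(R^\circ(x_i))\cong H_i/p^kH_i$ for each $i$. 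Since moreover $s(H_i)=n-d_{0,i}\le n-1$, this shows $(n-1;H_1,\cdots,H_m)\in\calC_{X_m,0,k}$. The case $k=\infty$ is not obtained directly (there $R(t)$ need not be a polynomial), but it follows from the finite case together with the identity $\calC_{X_m,0,\infty}=\calC_{X_m,0,m-1}$ of Proposition \ref{prop2c}(\ref{item2c22}). I expect step (iii) — equivalently, the claim that the Schur complement $R(t)$ remains a zeroth integral — to be the main obstacle; everything else is routine.
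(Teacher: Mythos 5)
Your proof is correct and takes essentially the same route as the paper's: a Schur complement with respect to a unit entry of $P(0)$, followed by the observation that the complement remains a zeroth integral after truncating modulo $p^k$, and the reduction of $k=\infty$ to $k=m-1$ via Proposition~\ref{prop2c}(\ref{item2c22}). The only real difference is the normalization: the paper first applies the Smith normal form to $P(0)$, so that in the $(n-1)+1$ block decomposition the off-diagonal blocks are $\beta(t)=ptf(t)$ and $\gamma(t)=ptg(t)$; the Schur correction $p^2t^2(1+pth(t))^{-1}f(t)g(t)$ is then manifestly a zeroth integral once the geometric series $(1+pth(t))^{-1}=\sum_j(-pth(t))^j$ is truncated at $j<k$, so no separate ``inverse of a tame series is tame'' lemma is needed. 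Your permutation-only normalization is equally valid but makes that extra lemma genuinely necessary; you correctly identify it and the recursion $b_n=-a_0^{-1}\sum_{j=1}^n p^j a_j b_{n-j}$ proves it by an immediate induction, so there is no gap --- you simply trade the Smith normal form step for a bit more power-series bookkeeping.
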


\begin{proof}
Since we have $\calC_{X_m,0,\infty}=\calC_{X_m,0,m-1}$, we may assume that $k$ is finite. Let $P(t)\in\M_n(\Z_p)[t]$ be a zeroth integral which satisfies $\cok_{\Z_p/p^k\Z_p}(P(x_i))\cong H_i/p^kH_i$ for each $i$. The constant term of $P(t)$ satisfies $\rank_{\F_p}(P(0))=d_{0,1}>0$. Using the Smith normal form of $P(0)$, we may assume that 
\begin{align*}
P(t)&=\begin{pmatrix}A_0&O\\O&1\end{pmatrix}+ptA_1+\cdots+p^rt^rA_r\\&=:\begin{pmatrix}P_1(t)&ptf(t)\\ptg(t)&1+pth(t)\end{pmatrix}\in\M_{(n-1)+1}(\Z_p)[t]
\end{align*}
for some $r\in\Z_{\ge0}$ and zeroth integrals $f(t),g(t),h(t),P_1(t)$. Then we have
\begin{align*}
\cok_{\Z_p/p^k\Z_p}(P(t))&=\cok_{\Z_p/p^k\Z_p}\left(\begin{pmatrix}P_1(t)&ptf(t)\\ptg(t)&1+pth(t)\end{pmatrix}\right)\\&\cong\cok_{\Z_p/p^k\Z_p}\left(\begin{pmatrix}P_1(t)-p^2t^2(1+pth(t))^{-1} f(t)g(t)&O\\O&1+pth(t)\end{pmatrix}\right)\\&\cong\cok_{\Z_p/p^k\Z_p} (Q(t))
\end{align*}
where $Q(t):=P_1(t)-p^2t^2(\sum_{j=0}^{k-1}(-1)^jp^jt^jh(t)^j) f(t)g(t)\in\M_{n-1}(\Z_p)[t]$ is a zeroth integral as the set of zeroth integrals is closed under sum and product. This implies that $(n-1;H_1,\cdots,H_m)\in\calC_{X_m,0,k}$.
\end{proof}

Recall that the set $\calC_{X_m,l,k}$ has a monoid structure by Proposition \ref{prop2c}(\ref{item2c1}). For $S \subset \calC_{X_m,l,k}$, let $\langle S \rangle$ be the submonoid of $\calC_{X_m,l,k}$ generated by the elements of $S$.

\begin{corollary} \label{cor3b}
For every $k \in \extz_{\ge 0}$, we have $\calC_{X_m,0,k+1}=\langle \left \{ (1;1,\cdots,1) \right \} \cup \varphi^{-1}_{0,k}(\calC_{X_m,1,k}) \rangle$.
\end{corollary}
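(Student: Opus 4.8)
The plan is to establish the two inclusions of the claimed equality separately, the non-trivial one being a short consequence of Proposition \ref{prop3a} together with the bijection of Proposition \ref{prop2c}(\ref{item2c4}). The inclusion $\supseteq$ is immediate: by definition $\langle S\rangle$ is the submonoid of the monoid $\calC_{X_m,0,k+1}$ (Proposition \ref{prop2c}(\ref{item2c1})) generated by $S$, so I only need that $\{(1;1,\cdots,1)\}\cup\varphi_{0,k}^{-1}(\calC_{X_m,1,k})\subset\calC_{X_m,0,k+1}$. Here $(1;1,\cdots,1)\in\calC_{X_m,0,k+1}$ is witnessed by the constant zeroth integral $1\in\M_1(\Z_p)[t]$, and $\varphi_{0,k}^{-1}(\calC_{X_m,1,k})$ is a subset of the domain of $\varphi_{0,k}$, which by Proposition \ref{prop2c}(\ref{item2c4}) is contained in $\calC_{X_m,0,k+1}$.

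For the inclusion $\subset$ I would first record, using that $\varphi_{0,k}$ is a \emph{bijection} onto $\calC_{X_m,1,k}$, the identification
$$
\varphi_{0,k}^{-1}(\calC_{X_m,1,k})=\{(n;H_1,\cdots,H_m)\in\calC_{X_m,0,k+1}:d_{0,1}=\cdots=d_{0,m}=0\}.
$$
Then I take an arbitrary $(n;H_1,\cdots,H_m)\in\calC_{X_m,0,k+1}$. Since membership in $\calC_{X_m,0,k+1}$ forces $d_{0,1}=\cdots=d_{0,m}=:d\ge0$ (the fact recalled just before Proposition \ref{prop3a}), I apply Proposition \ref{prop3a} exactly $d$ times; each application leaves $H_1,\cdots,H_m$ unchanged and lowers $n$ by one, hence lowers each $d_{0,i}$ by one, so after $d$ steps we reach $(n-d;H_1,\cdots,H_m)\in\calC_{X_m,0,k+1}$ with all $d_{0,i}=0$, i.e.\ an element of $\varphi_{0,k}^{-1}(\calC_{X_m,1,k})$. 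Because the monoid sum (\ref{eq2new1}) satisfies $(n';H_1,\cdots,H_m)+(1;1,\cdots,1)=(n'+1;H_1,\cdots,H_m)$, adding $(1;1,\cdots,1)$ back $d$ times gives
$$
(n;H_1,\cdots,H_m)=(n-d;H_1,\cdots,H_m)+\underbrace{(1;1,\cdots,1)+\cdots+(1;1,\cdots,1)}_{d},
$$
which exhibits $(n;H_1,\cdots,H_m)$ as a member of $\langle\{(1;1,\cdots,1)\}\cup\varphi_{0,k}^{-1}(\calC_{X_m,1,k})\rangle$, as desired.

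I do not anticipate a genuine obstacle: the statement is a formal consequence of Proposition \ref{prop3a} and the bijection of Proposition \ref{prop2c}(\ref{item2c4}). The only points that deserve a moment's care are that the iterated reduction of Proposition \ref{prop3a} terminates precisely when all $d_{0,i}=0$ — which holds because the numbers $s(H_i)=\sum_{r\in\extz_{\ge1}}d_{r,i}$ are invariant under that reduction while $d_{0,i}=n-s(H_i)$ — and the identification of $\varphi_{0,k}^{-1}(\calC_{X_m,1,k})$ with the full domain of $\varphi_{0,k}$. The case $k=\infty$ (so $k+1=\infty$) needs no separate treatment, since $\calC_{X_m,0,\infty}=\calC_{X_m,0,m-1}$ by Proposition \ref{prop2c}(\ref{item2c22}) and Proposition \ref{prop3a} applies there as well.
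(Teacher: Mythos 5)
Your proof is correct and follows essentially the same route as the paper: the $\supset$ direction is witnessed by $P(t)=1$, and the $\subset$ direction peels off $d_{0,i}=n-s$ copies of $(1;1,\cdots,1)$ via Proposition \ref{prop3a} to land in the domain of $\varphi_{0,k}$, then invokes Proposition \ref{prop2c}(\ref{item2c4}). Your iterated one-step reduction and the paper's single jump to $(s;H_1,\cdots,H_m)$ are the same decomposition, just written out at different granularities.
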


\begin{proof}
The inclusion $\supset$ is clear since a zeroth integral $P(t)=1$ gives $(1;1,\cdots,1) \in \calC_{X_m,0,k+1}$. Conversely, every element $(n; H_1, \cdots, H_m) \in \calC_{X_m,0,k+1}$ satisfies $n \ge s = s(H_1) = \cdots = s(H_m)$ so
$$
(n; H_1, \cdots, H_m) = (n-s)(1; 1, \cdots, 1) + (s; H_1, \cdots, H_m) \in \langle \left \{ (1;1,\cdots,1) \right \} \cup \varphi^{-1}_{0,k}(\calC_{X_m,1,k}) \rangle
$$
by Proposition \ref{prop3a} and \ref{prop2c}(\ref{item2c4}).
\end{proof}

By Proposition \ref{prop2c}(\ref{item2c22}) and \ref{prop2c}(\ref{item2c3}), the set $\calC_{X_m}$ is determined by the set $\calC_{X_m,0,m-1}$, which is determined by the set $\calC_{X_m,1,m-2}$ by Corollary \ref{cor3b}. Since we have $\calC_{X_m,1,m-2} \subset \calC_{X_m,1,1}$ for every $m \ge 3$, an inequality which holds for elements of $\calC_{X_m,1,1}$ also holds for elements of $\calC_{X_m,1,m-2}$. 


\begin{lemma} \label{lem3c}
If $(n;H_1,\cdots,H_m)\in\calC_{X_m,1,1}$, then
$$
\sum_{i=1}^{m}d_{0, i} - (m-1)\alpha_0\ge0
$$
for some non-negative integer $\alpha_0\ge\underset{1\le i\le m}{\max}d_{0,i}$.
\end{lemma}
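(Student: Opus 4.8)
The plan is to reduce the statement modulo $p$, where it becomes a purely linear-algebraic claim about the ranks of a matrix pencil over $\F_p$ at $m$ distinct specializations, and then to control the total rank-drop of the pencil via its Smith normal form over $\F_p[\lambda]$.

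First I would unwind the hypothesis. Since $(n;H_1,\cdots,H_m)\in\calC_{X_m,1,1}$, there is a first integral $P(t)=A_0+tA_1+\sum_{j\ge2}p^{j-1}t^jA_j\in\M_n(\Z_p)[t]$ (finite sum) with $\cok_{\Z_p/p\Z_p}(P(x_i))\cong H_i/pH_i$ for each $i$. Reduction modulo $p$ annihilates every term $p^{j-1}t^jA_j$ with $j\ge2$, so $\overline{P(x_i)}=M_0+c_iM_1$ where $M_0:=\overline{A_0}$, $M_1:=\overline{A_1}\in\M_n(\F_p)$ and $c_i:=\overline{x_i}\in\F_p$ $(1\le i\le m)$ are distinct. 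Comparing $\F_p$-dimensions in $\cok_{\F_p}(\overline{P(x_i)})\cong H_i/pH_i\cong(\Z/p\Z)^{s_i}$ gives $\rank_{\F_p}(M_0+c_iM_1)=n-s_i=d_{0,i}$ for each $i$. So the lemma becomes: for $M_0,M_1\in\M_n(\F_p)$ and distinct $c_1,\dots,c_m\in\F_p$, there is $\alpha_0\in\Z_{\ge0}$ with $\alpha_0\ge\rank(M_0+c_iM_1)$ for all $i$ and $\sum_{i=1}^m\rank(M_0+c_iM_1)\ge(m-1)\alpha_0$.

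I would take $\alpha_0:=\rho$, the rank of the pencil $M(\lambda):=M_0+\lambda M_1$ over the field $\F_p(\lambda)$. Specializing $\lambda\mapsto c$ cannot raise the rank, so $\rho\ge\rank(M_0+cM_1)$ for every $c$; in particular $\alpha_0\ge\max_id_{0,i}\ge0$. Writing $\delta(c):=\rho-\rank(M_0+cM_1)\ge0$, the desired inequality $\sum_id_{0,i}\ge(m-1)\alpha_0$ is equivalent to $\sum_{i=1}^m\delta(c_i)\le\rho$, for which (the $c_i$ being distinct and $\delta\ge0$) it suffices to prove the stronger bound $\sum_{c\in\F_p}\delta(c)\le\rho$. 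For this I would use the Smith normal form $UM(\lambda)V=\diag(f_1,\dots,f_\rho,0,\dots,0)$ over the PID $\F_p[\lambda]$ with $f_1\mid\cdots\mid f_\rho$ nonzero: since $\det U$ and $\det V$ are units of $\F_p[\lambda]$, hence nonzero constants, $U(c)$ and $V(c)$ are invertible over $\F_p$ for every $c$, so $\rank(M_0+cM_1)=\#\{1\le i\le\rho:f_i(c)\ne0\}$ and $\delta(c)=\#\{1\le i\le\rho:f_i(c)=0\}$. Therefore
$$\sum_{c\in\F_p}\delta(c)=\sum_{i=1}^\rho\#\{c\in\F_p:f_i(c)=0\}\le\sum_{i=1}^\rho\deg f_i=\deg\Big(\prod_{i=1}^\rho f_i\Big)\le\rho,$$
where the last step holds because $\prod_{i=1}^\rho f_i$ is, up to a unit, the $\rho$-th determinantal divisor of $M(\lambda)$, hence divides some nonzero $\rho\times\rho$ minor of $M_0+\lambda M_1$, and such a minor is a determinant of a $\rho\times\rho$ matrix with entries of degree at most $1$ in $\lambda$, so has degree at most $\rho$.

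The only real obstacle is the last bound $\sum_c\delta(c)\le\rho$ on the total rank-drop. The naive estimate — a single nonvanishing $\rho\times\rho$ minor has at most $\rho$ roots — bounds only the number of points $c$ at which the rank drops, not the drop counted with multiplicity, and the pencil $\diag(\lambda-c_1,\dots,\lambda-c_\rho)$ shows that the sharp bound genuinely needs the multiplicities. Extracting them seems to require the invariant-factor structure of the pencil over $\F_p[\lambda]$; by contrast the reduction modulo $p$, the identification $d_{0,i}=\rank(M_0+c_iM_1)$, and the choice $\alpha_0=\rho$ are all routine.
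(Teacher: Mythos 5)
Your proof is correct, and it takes a genuinely different route from the paper. The paper proves this lemma by induction on $n$: given a first integral $P(t)$ with $\cok_{\Z_p/p\Z_p}(P(x_i))\cong H_i/pH_i$, it performs explicit row/column reductions on the matrix of linear coefficients (splitting into cases $p\nmid B_1$, $p\nmid B_2$ or $B_3$, and the degenerate case) to produce a first integral of size $n-1$ whose associated tuple would violate the induction hypothesis. That reduction machinery is deliberately heavy because it is the same engine (Lemma \ref{lem3e}) that later drives the $\bmod\ p^2$ analysis in Theorem \ref{thm3g}, where the pair $(d_{0,i},d_{1,i})$ must be tracked simultaneously and a one-variable pencil argument over $\F_p[\lambda]$ no longer suffices. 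Your argument, by contrast, kills everything in one shot: after reduction modulo $p$ the data is a linear pencil $M_0+\lambda M_1$ over $\F_p$, you take $\alpha_0$ to be its generic rank $\rho$, and the Smith normal form over the PID $\F_p[\lambda]$ bounds the total rank-drop $\sum_{c\in\F_p}(\rho-\rank(M_0+cM_1))$ by $\deg\prod_i f_i\le\rho$, since $\prod_i f_i$ is the $\rho$-th determinantal divisor and hence divides a nonzero $\rho\times\rho$ minor of degree $\le\rho$. This is cleaner and gives the stronger statement with $\alpha_0=\rho$. It is worth noting that the same pencil argument, applied to $M_0+\lambda M_1+\cdots+\lambda^lM_l$ (whose $\rho\times\rho$ minors have degree $\le l\rho$), also yields Lemma \ref{lem3f} directly, avoiding the rational-canonical-form dilation used in the paper; the trade-off is that the induction/reduction apparatus is unavoidable once one passes to the $\bmod\ p^2$ refinement, which is why the paper sets up Lemma \ref{lem3c} the way it does.
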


\begin{proof}
It suffices to show that $\sum_{i=1}^{m}d_{0, i} \ge (m-1)\underset{1\le i\le m}{\max}d_{0,i}$. We phrased the result in this way to make it consistent with Theorem \ref{thm3g}.

We use induction on $n$. The case $n=1$ follows from Example \ref{ex2d}. Now assume that $n > 1$ and the theorem holds for every $n' < n$. Suppose that there exists $(n;H_1,\cdots,H_m)\in\calC_{X_m,1,1}$ such that
$$
\sum_{i=1}^{m} d_{0,i} < (m-1) \underset{1\le i\le m}{\max}d_{0,i}.
$$
Let $P(t)\in\M_n(\Z_p)[t]$ be a first integral such that $\cok_{\Z_p/p\Z_p}(P(x_i))\cong H_i/pH_i$ for each $i$. Applying the Smith normal form of the constant term of $P(t)$, we may assume that $P(t) = \begin{pmatrix}
pA & O\\ 
O & I_{d_{0,1}}
\end{pmatrix} + tB + pQ(t)$ for some $A \in \M_{n-d_{0,1}}(\Z_p)$, $B \in \M_n(\Z_p)$ and $Q(t) \in \M_n(\Z_p)[t]$. Moreover, $P_1(t) := \begin{pmatrix}
O & O\\ 
O & I_{d_{0,1}}
\end{pmatrix} + tB$ satisfies $d_{0, P(x_i)} = d_{0, P_1(x_i)}$ for each $i$ so we may assume that
$$
P(t) = \begin{pmatrix} O&O\\O&I_{d_{0,1}} \end{pmatrix}+ t \begin{pmatrix}B_1&B_2\\B_3&B_4\end{pmatrix}.
$$

\phantom{}

\noindent \textbf{Case 1.} $p\nmid B_1$. 

The Smith normal form of $B_1$ gives $U,V\in \GL_{n-d_{0,1}}(\Z_p)$ such that $UB_1V=\begin{pmatrix}1&O\\O&B_1'\end{pmatrix}$ for some $B_1'\in\M_{n-d_{0,1}-1}(\Z_p)$. For every non-zero $x\in X_m$, we have
\begin{align*}
\cok_{\Z_p/p\Z_p}(P(x))
&\cong \cok_{\Z_p/p\Z_p}\left(\begin{pmatrix}U&O\\O&I_{d_{0,1}}\end{pmatrix} P(x)\begin{pmatrix}V&O\\O&I_{d_{0,1}}\end{pmatrix}\right)\\
&=\cok_{\Z_p/p\Z_p}\left(\begin{pmatrix}\begin{matrix}x&O\\O&xB_1'\end{matrix}&xUB_2\\xB_3V&I_{d_{0,1}}+xB_4\end{pmatrix}\right)\\
& \cong\cok_{\Z_p/p\Z_p} \left(\begin{pmatrix}x&O \\O& Q(x) \end{pmatrix}\right)
\end{align*}
for a first integral $Q(t) := \begin{pmatrix}O&O\\O&I_{d_{0,1}}\end{pmatrix} + t \begin{pmatrix}B_1'&B_2'\\B_3'&B_4'\end{pmatrix}\in\M_{n-1}(\Z_p)[t]$. 
Then $d_{0,Q(x_1)}=d_{0,1}$, $d_{0,Q(x_i)}=d_{0,i}-1$ for $2 \le i \le m$ and $(n-1;\cok(Q(x_1)),\cdots,\cok(Q(x_m)))$ contradicts the induction hypothesis since 
\begin{equation*}
\begin{split}
\sum_{i=1}^{m} d_{0,Q(x_i)} -(m-1) \underset{1\le i\le m}{\max}d_{0,Q(x_i)}
& = \sum_{i=1}^{m} d_{0,i} -(m-1)(\underset{1\le i\le m}{\max}d_{0,Q(x_i)}+1) \\
& \le \sum_{i=1}^{m} d_{0,i} -(m-1) \underset{1\le i\le m}{\max}d_{0,i} \\
& <0.
\end{split}    
\end{equation*}

\phantom{}

\noindent \textbf{Case 2.} $p \mid B_1$ and [$p\nmid B_2$ or $p\nmid B_3$].

We may assume that $p \nmid B_2$. Choose invertible matrices $U$ and $V$ such that
$UB_2V=\begin{pmatrix}1&O\\O&B_2'\end{pmatrix}$. For every non-zero $x\in X_m$, we have
\begin{align*}
\cok_{\Z_p/p\Z_p}(P(x))&\cong \cok_{\Z_p/p\Z_p}\left(\begin{pmatrix}U&O\\O&V^{-1}\end{pmatrix}P(x)\begin{pmatrix}I_{d_{1,1}}&O\\O&V\end{pmatrix}\right)\\
&=\cok_{\Z_p/p\Z_p}\left(\begin{pmatrix}O&\begin{matrix}x&O\\O&xB_2'\end{matrix}\\xV^{-1}B_3&I_{d_{0,1}}+xV^{-1}B_4V\end{pmatrix}\right)\\
&\cong\cok_{\Z_p/p\Z_p} \left(\begin{pmatrix}x&O \\O& Q(x) \end{pmatrix}\right)
\end{align*}
for a first integral $Q(t) := \begin{pmatrix} O&O\\O&I_{d_{0,1}-1} \end{pmatrix} + t \begin{pmatrix}O&B_2'\\B_3'&B_4'\end{pmatrix}\in\M_{n-1}(\Z_p)[t]$. 
Then $d_{0,Q(x_i)}=d_{0,i}-1$ for $1 \le i \le m$ and $(n-1;\cok(Q(x_1)),\cdots,\cok(Q(x_m)))$ contradicts the induction hypothesis since 
\begin{equation*}
\sum_{i=1}^{m} d_{0,Q(x_i)} -(m-1) \underset{1\le i\le m}{\max}d_{0,Q(x_i)}
= \sum_{i=1}^{m} d_{0,i} -(m-1) \underset{1\le i\le m}{\max}d_{0,i} -1 < 0.
\end{equation*}

\phantom{ }

In the remaining cases, we have $p \mid B_1,B_2,B_3$ so that $d_{0,1} = \underset{1\le i\le m}{\max} d_{0, i}$. By the same reason, we have $d_{0,1} = \cdots = d_{0,m}$. Then we have $\sum_{i=1}^{m} d_{0,i} -(m-1) \underset{1\le i\le m}{\max}d_{0,i} = d_{0,1} \ge 0$, a contradiction.
\end{proof}

We need some work to find the conditions for elements of $\calC_{X_m,1,2}$.

\begin{lemma} \label{lem3d}
Assume that $J\in\M_n(\Z_p)$ has a single non-zero row or column. For every $r\in\Z_{\ge 1}$, $x \in \Z_p$  and $A\in\M_n(\Z_p)$, there exists $x_0\in \Z_p$ and $d\in\Z_{\ge0}$ such that 
$$
d_{i,A}=d_{i,A+p^rxJ}\text{ for }i<r\text{ and }d_{r,A+p^rxJ}=\begin{cases}d-1\text{ or }d & (x \equiv x_0 \,\, (\bmod \; p)) \\ 
d & (x \not\equiv x_0 \,\, (\bmod \; p))\end{cases}.
$$
$J$ is called \textit{singular} of order $r$ at $x_0$ over $A$ if $d_{r,A+p^rx_0J}=d-1$.
\end{lemma}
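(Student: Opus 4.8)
The plan is to reduce the statement to a computation with cyclic quotients of one fixed $\Z_p$-module, carried out modulo $p^{r+1}$.

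First I would normalize. Since $\cok(B)\cong\cok(B^T)$, after transposing $A$ and $J$ if necessary I may assume $J$ has a single non-zero column, so $J=we_k^T$ for some $w\in\Z_p^n$ and some index $k$. Writing $a_1,\dots,a_n$ for the columns of $A$, the matrix $A+p^rxJ$ has the same columns as $A$ except that $a_k$ is replaced by $a_k+p^rxw$; hence, setting $N:=\Z_p^n/\sum_{i\ne k}\Z_p a_i$ and letting $\beta,\gamma\in N$ be the images of $a_k$ and $w$, one obtains the uniform description $\cok(A+p^rxJ)\cong N/\Z_p z(x)$ with $z(x):=\beta+p^rx\gamma$. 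The part of the statement about $i<r$ is then immediate: $A+p^rxJ\equiv A\pmod{p^r}$ forces $\cok_{\Z_p/p^r\Z_p}(A+p^rxJ)\cong\cok_{\Z_p/p^r\Z_p}(A)$, which pins down $d_{i,A+p^rxJ}=d_{i,A}$ for every $i<r$ and also fixes the integer $n_0:=\sum_{j\in\extz_{\ge r}}d_{j,A+p^rxJ}$ independently of $x$.

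Next I would turn $d_{r,A+p^rxJ}$ into a dimension count. For any finitely generated $\Z_p$-module $M\cong\prod_{j\in\extz_{\ge1}}(\Z_p/p^j\Z_p)^{d_j}$ one has $\dim_{\F_p}(p^rM/p^{r+1}M)=\sum_{j\in\extz_{\ge r+1}}d_j$, so with $M(x):=N/\Z_p z(x)\cong\cok(A+p^rxJ)$ the previous paragraph gives $d_{r,A+p^rxJ}=n_0-\dim_{\F_p}\big(p^rM(x)/p^{r+1}M(x)\big)$. Passing to $\overline N:=N/p^{r+1}N$ and denoting by $C(x)$ the cyclic submodule of $\overline N$ generated by the image $\overline z(x)$ of $z(x)$, a short manipulation of subquotients identifies $p^rM(x)/p^{r+1}M(x)$ with $p^r\overline N/(p^r\overline N\cap C(x))$. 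Setting $t:=\dim_{\F_p}(p^r\overline N)$, a fixed integer, this yields
\[
d_{r,A+p^rxJ}=(n_0-t)+\varepsilon(x),\qquad \varepsilon(x):=\dim_{\F_p}\big(p^r\overline N\cap C(x)\big).
\]
Because $p^r\overline N$ is annihilated by $p$, we have $p^r\overline N\cap C(x)\subseteq C(x)[p]$, so $\varepsilon(x)\in\{0,1\}$; already this shows $d_{r,A+p^rxJ}$ takes at most two consecutive values.

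The heart of the argument is then to prove that $\varepsilon(x)=1$ for all residue classes of $x$ modulo $p$ with at most one exception. This is where $r\ge1$ is used: for $k\ge1$ the term $p^{k+r}x\gamma$ dies in $\overline N$, so $p^k\overline z(x)=p^k\bar\beta$ (with $\bar\beta$ the image of $\beta$) and $\overline z(x)=\bar\beta+\bar x\,\theta$, where $\theta:=\overline{p^r\gamma}\in p^r\overline N$ and $\bar x:=x\bmod p$. If $p\bar\beta\ne0$, then $\overline z(x)$ has order $\operatorname{ord}(\bar\beta)\ge2$ and its socle $C(x)[p]=\langle p^{\operatorname{ord}(\bar\beta)-1}\bar\beta\rangle$ is independent of $x$, so $\varepsilon$ is constant. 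If $p\bar\beta=0$, then $\overline z(x)$ has order at most $1$, and since $\theta\in p^r\overline N$ the condition $\overline z(x)\in p^r\overline N$ is equivalent to $\bar\beta\in p^r\overline N$, again independent of $x$; when it fails, $\varepsilon\equiv0$, and when it holds, $\varepsilon(x)=1$ exactly when $\overline z(x)=\bar\beta+\bar x\,\theta\ne0$, an equation which has at most one solution $\bar x_0$ in the $\F_p$-vector space $p^r\overline N$ unless $\theta=\bar\beta=0$ (in which case $\varepsilon\equiv0$). In every branch, $\varepsilon$ is either constant on $\Z_p$ or equal to $1$ off a single residue class $x_0$; taking $d:=(n_0-t)+\max_x\varepsilon(x)$, with $x_0$ arbitrary in the constant case, then gives exactly the asserted dichotomy. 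I expect the bookkeeping of this last step — tracking which of the data $\bar\beta$, $\theta$, the order of $\overline z(x)$, and the socle $C(x)[p]$ are genuinely constant versus depending on $x$ only through $x\bmod p$, and verifying uniqueness of the exceptional class in the single non-constant branch — to be the main (though routine) obstacle; the identity $d_{r,A+p^rxJ}=(n_0-t)+\varepsilon(x)$ with $\varepsilon(x)\in\{0,1\}$ does the real work.
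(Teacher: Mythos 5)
Your proof is correct and follows essentially the same approach as the paper: reduce to a single non-zero column, use $A+p^rxJ\equiv A\pmod{p^r}$ to fix $d_{i,\cdot}$ for $i<r$, and observe that the remaining quantity $d_{r,A+p^rxJ}$ is governed by a linear-algebraic condition modulo $p^{r+1}$ that depends on $x$ only through $x\bmod p$ and can fail to hold for at most one residue class. The paper packages this as a rank count on columns with an explicit choice of maximal linearly independent subset and the set $S$, whereas you pass to the quotient module $N=\Z_p^n/\sum_{i\ne k}\Z_p a_i$, reduce mod $p^{r+1}$, and track the socle of the cyclic submodule $C(x)\subset\overline{N}$ via the identity $d_{r,A+p^rxJ}=(n_0-t)+\varepsilon(x)$ — same underlying linear algebra, slightly cleaner bookkeeping.
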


\begin{proof}
We may assume that $J=\begin{pmatrix} J_1 & O_{n \times (n-1)} \end{pmatrix}$. The isomorphism $\cok_{\Z_p/p^r\Z_p}(A) \cong \cok_{\Z_p/p^r\Z_p}(A+p^rxJ)$ implies that $d_{i,A}=d_{i,A+p^rxJ}$ for each $i < r$. 
Let $A_j$ be the $j$-th column of $A$ for each $j$ and $A_{s_1},\cdots,A_{s_D}$ ($2 \le s_j \le m$) be any maximal subset of $\left \{ A_2, \cdots, A_m \right \}$ whose elements are linearly independent modulo $p^{r+1}$, i.e. $\sum_{j=1}^{D} c_jA_{s_j} \equiv 0 \,\, (\bmod \; p^{r+1})$ implies that $c_1, \cdots, c_{D}  \equiv 0 \,\, (\bmod \; p)$. (For example, $(1, 1), (1, p+1) \in \Z_p^2$ are not linearly independent modulo $p$ but are linearly independent modulo $p^2$.) Define
$$
S:=\begin{Bmatrix}
v\in\Z_p^n : \text{there exist }c_0,\cdots,c_D\in\Z_p\text{ such that }p\nmid c_j\text{ for some } \\ 0\le j\le D \text{ and } c_0v+c_1A_{s_1}+\cdots+c_DA_{s_D} \equiv 0 \,\, (\bmod \; p^{r+1})
\end{Bmatrix}.
$$ 
Since the number $\sum_{i=0}^rd_{i,A}$ is the maximum number of linearly independent columns of $A$ modulo $p^{r+1}$, we have
$$
D_x := \sum_{i=0}^rd_{i,A+p^rxJ}=\begin{cases}D &\text{if }A_1+p^rx J_1\in S,\\
D+1 &\text{otherwise}\end{cases}.
$$
If we have $D_x = D+1$ for every $x \in \Z_p$, then the lemma holds for $d = d_{r, A}+1$ and any $x_0 \in \Z_p$. Now assume that $D_{x_0}=D$ for some $x_0 \in \Z_p$. Then there exist $c_0,\cdots,c_D\in\Z_p$ such that $p\nmid c_j$ for some $j$ and 
$$
c_0(A_1+p^r{x_0}J_1)+c_1A_{s_1}+\cdots+c_DA_{s_D} \equiv 0 \,\, (\bmod \; p^{r+1}).
$$ 
If $p\mid c_0$, then $c_0(A_1+p^rx_0J_1) \equiv c_0(A_1+p^rxJ_1) \,\, (\bmod \; p^{r+1})$ for every $x \in \Z_p$ so we have $D_{x}=D$ for every $x \in \Z_p$. If $p\nmid c_0$, then $A_1+p^rx_0J_1$ is a $\Z_p$-linear combination of $A_{s_1}, \cdots, A_{s_D}$ modulo $p^{r+1}$. If there exists $x_1 \not \equiv x_0 \,\, (\bmod \; p)$ such that $A_1+p^rx_1J_1$ is a $\Z_p$-linear combination of $A_{s_1}, \cdots, A_{s_D}$ modulo $p^{r+1}$, then
$$
\frac{(A_1+p^r{x_0}J_1)-(A_1+p^r{x_1}J_1)}{x_0-x_1}=p^rJ_1
$$
is also a $\Z_p$-linear combination of $A_{s_1}, \cdots, A_{s_D}$ modulo $p^{r+1}$ so we have $D_x = D$ for every $x \in \Z_p$.
If there is no such $x_1$, then $D_x = D+1$ if and only if $x \not \equiv x_0 \,\, (\bmod \; p)$. 
\end{proof}

The following lemma is the most technical part of this paper.

\begin{lemma} \label{lem3e}
Let $P(t)\in\M_n(\Z_p)[t]$ be a first integral. Then either one of the following holds:
\begin{enumerate}
    \item \label{item3d1} There exists a first integral $Q(t) \in \M_{n-1}(\Z_p)[t]$ satisfies either, for at most one $1\le r\le m$,
    \begin{enumerate}
        \item \label{item3d11} $\begin{cases}d_{0,Q(x_i)}=d_{0,P(x_i)}-1\text{ for every }1\le i\le m, \\ 
        d_{1,Q(x_i)}\in\{d_{1,P(x_i)},d_{1,P(x_i)}+1\}\text{ for }i=r\text{ and }d_{1,Q(x_i)}=d_{1,P(x_i)}\text{ otherwise;}\end{cases}$
        \item \label{item3d12} $\begin{cases}d_{0,Q(x_i)}=d_{0,P(x_i)}\text{ for }i=r\text{ and }d_{0,Q(x_i)}=d_{0,P(x_i)}-1\text{ otherwise,}\\ 
        d_{1,Q(x_i)}\in \left \{ d_{1,P(x_i)}-1, d_{1,P(x_i)}-2 \right \}\text{ for }i=r\text{ and }d_{1,Q(x_i)}=d_{1,P(x_i)}\text{ otherwise;}\end{cases}$
        \item \label{item3d13}  $\begin{cases}d_{0,Q(x_i)}=d_{0,P(x_i)}\text{ for }i=r\text{ and }d_{0,Q(x_i)}=d_{0,P(x_i)}-1\text{ otherwise,}\\ 
        d_{1,Q(x_i)}=d_{1,P(x_i)}\text{ for every }1\le i\le m. \end{cases}$
    \end{enumerate}

    \item \label{item3d2} The number $d_{0,P(x_i)}$ is constant for every $1 \le i \le m$ and there exists a first integral $Q(t) \in \M_{n-d_{0,P(x_i)}}(\Z_p)[t]$ satisfying $d_{0,Q(x_i)}=0$ and $d_{1,Q(x_i)}=d_{1,P(x_i)}$ for each $i$.
\end{enumerate}
\end{lemma}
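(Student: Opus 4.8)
The plan is to prove the dichotomy by a reduction procedure that, starting from $P(t)$, either strips off one row and column (landing in conclusion (\ref{item3d1})) or strips off the whole unit block at once (landing in conclusion (\ref{item3d2})), in the spirit of the proof of Lemma~\ref{lem3c} but now controlling the first invariant factor in addition to the zeroth. I would first make two normalizations. Since $d_{0,P(x_i)}$ and $d_{1,P(x_i)}$ are determined by $P(x_i)$ modulo $p^2$, and the reduction modulo $p^2$ of a first integral is a polynomial $A_0+tA_1+pt^2A_2$ — again a first integral — I may replace $P(t)$ by this truncation and look for $Q$ of the same form; such a $Q$ is automatically a genuine first integral and the conclusions only involve $d_0$ and $d_1$. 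Next, since $x_1=0$ the constant term of $P$ equals $P(x_1)$, so after multiplying $P$ on both sides by fixed matrices in $\GL_n(\Z_p)$ and applying the Smith normal form of $P(x_1)$ I may assume
$$
P(t)=\begin{pmatrix}pA&O\\O&I_d\end{pmatrix}+t\begin{pmatrix}B_1&B_2\\B_3&B_4\end{pmatrix}+pt^2\begin{pmatrix}C_1&C_2\\C_3&C_4\end{pmatrix},\qquad d:=d_{0,P(x_1)},\ A\in\M_{n-d}(\Z_p).
$$

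If $p$ divides the linear coefficient $B=A_1$, then $\overline{P(x_i)}=\overline{A_0}$ for all $i$, so $d_{0,P(x_i)}=d$ for all $i$; the bottom-right block $I_d+tB_4+pt^2C_4\equiv I_d\pmod p$ is invertible at every $x_i$, and the off-diagonal blocks $tB_2+pt^2C_2,\ tB_3+pt^2C_3$ are divisible by $p$, so the Schur complement with respect to the bottom-right block is congruent to $pA+tB_1+pt^2C_1$ modulo $p^2$. Hence $Q(t):=pA+tB_1+pt^2C_1\in\M_{n-d}(\Z_p)[t]$ is a first integral with $\cok_{\Z_p/p^2\Z_p}(Q(x_i))\cong\cok_{\Z_p/p^2\Z_p}(P(x_i))$; since $Q(x_i)$ is divisible by $p$ this gives $d_{0,Q(x_i)}=0$ and $d_{1,Q(x_i)}=d_{1,P(x_i)}$, which is conclusion (\ref{item3d2}). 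The truncation modulo $p^2$ is needed here: the untruncated Schur complement is in general not a first integral because its product term has a $t^4$-coefficient divisible only by $p^2$.

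If $p\nmid B$, then $\overline{P(t)}=\overline{A_0}+t\overline B$ has a non-constant entry, linear modulo $p$, which vanishes at exactly one residue class $\overline{x_r}$ (possibly not among $\overline{x_1},\dots,\overline{x_m}$). Separating the cases $p\nmid B_1$, $p\mid B_1$ with $p\nmid B_2$ or $p\nmid B_3$, and $p\mid B_1,B_2,B_3$ with $p\nmid B_4$ — the last after changing basis inside the $I_d$-block to split off an eigenvector of $\overline{B_4}$ for the eigenvalue $-\overline{x_r}^{-1}$ — and applying the Smith normal form of the relevant block, I bring to position $(1,1)$ an entry $\varepsilon(t)$ that is linear and non-constant modulo $p$, hence a unit at every $x_i$ with $i\ne r$. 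Eliminating the first row and column against $\varepsilon(t)$ subtracts a rank-one correction $\gamma(t)\varepsilon(t)^{-1}\beta(t)$ from the bottom-right block; working modulo $p^2$ this correction is $p$ times a quantity whose values at the at most $m-1\le 3$ indices $i\ne r$ are matched by a polynomial of degree $\le 2$ modulo $p$, so adding $p$ times that interpolating polynomial to the bottom-right block yields a first integral $Q(t)\in\M_{n-1}(\Z_p)[t]$ with $\cok_{\Z_p/p^2\Z_p}(Q(x_i))\cong\cok_{\Z_p/p^2\Z_p}(P(x_i))$ for $i\ne r$ (so $d_{0,Q(x_i)}=d_{0,P(x_i)}-1$, or $=d_{0,P(x_i)}$ in the off-diagonal case where $\varepsilon$ is a unit also at $x_r$, and $d_{1,Q(x_i)}=d_{1,P(x_i)}$, for $i\ne r$). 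At the single index $i=r$, where $\varepsilon(x_r)$ is divisible by $p$, one computes $d_{0,Q(x_r)}$ and $d_{1,Q(x_r)}$ directly; the possibilities for $d_{1,Q(x_r)}$ — unchanged, decreased by $1$ or $2$, or increased by $1$ — are exactly the trichotomy (\ref{item3d11})–(\ref{item3d13}), and the uniqueness of $r$ reflects that a rank-one perturbation of order $1$ is singular at most at one residue class, which is Lemma~\ref{lem3d}.

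The main obstacle is controlling the first invariant factor $d_{1,Q(x_r)}$ at the single degenerate index: there the constant block $pA$ genuinely interacts with the elimination modulo $p^2$, so the Schur complement no longer matches cokernels on the nose. The organizing device is Lemma~\ref{lem3d}, applied with $r\in\{1,2\}$ (modulo $p^2$ only first-order singularity is visible) to the precise rank-one matrix that the reduction deletes, after separating the contribution of the unit block $I_d$ (which contributes nothing to any cokernel) from that of $pA$ (whose effect is a single shift of $p$-adic valuation, governed by a mod-$p$ computation as in Lemma~\ref{lem3c}). A secondary, pervasive difficulty is keeping every intermediate object a first integral, which dictates the precise shape of the reductions above: Schur-complementing only against the bottom-right block when $p\mid B$, and clearing against a modulo-$p$-linear entry (and adding back only $p$ times a low-degree polynomial) when $p\nmid B$.
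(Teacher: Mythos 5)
Your high‑level strategy is the same as the paper's (normalize the constant term, pivot on an entry of the linear coefficient that is a unit at all but one $x_i$, Schur‑complement, and invoke Lemma~\ref{lem3d} to control how $d_1$ moves at the exceptional index), and your observation that the case $p\mid A_1$ should be handled wholesale via the bottom‑right block and that truncating modulo $p^2$ keeps everything a first integral both match the paper. But there are three concrete gaps.

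First, your $2\times2$ decomposition $(n-d)+d$ lumps together what the paper deliberately separates into three blocks of sizes $d'_{2,1}$, $d_{1,1}$, $d_{0,1}$ corresponding to the entries $0$, $p$, $1$ of the constant term. The trichotomy (\ref{item3d11})--(\ref{item3d13}) is not one conclusion with three cosmetic variants: which of the three holds is dictated by \emph{which} block the pivot lands in (pivot over a $0$-entry gives (\ref{item3d13}) in the paper's Case~1, over a $p$-entry gives (\ref{item3d12}) with $d_1$ dropping by $1$ or $2$ in Cases~2 and~4, over an off‑diagonal entry between the $0$ and $1$ blocks gives (\ref{item3d11}) in Case~3). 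Your sketch never says which conclusion accompanies which pivot, and the phrase ``the possibilities for $d_{1,Q(x_r)}$ are exactly the trichotomy'' asserts the lemma rather than deriving it.

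Second, your subcase ``$p\mid B_1,B_2,B_3$, $p\nmid B_4$'' is handled by splitting off an eigenvector of $\overline{B_4}$ for the eigenvalue $-\overline{x_r}^{-1}$, but such an eigenvalue need not exist. If $\overline{B_4}$ has no eigenvalue in $\{-\overline{x_i}^{-1}\}$, then $I_d+x_i\overline{B_4}$ is invertible for every $i$, $d_{0,P(x_i)}$ is constant, and you must Schur‑complement against the entire $I_d$-block to land in conclusion~(\ref{item3d2}) --- exactly what the paper does in its ``remaining case.'' Your dichotomy places all of ``$p\nmid B_4$'' inside conclusion~(\ref{item3d1}), so this subcase falls through.

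Third, and most seriously, you have not accounted for what the paper isolates as its hardest step (Case~5: $p$ divides everything except $B_{23}$ or $B_{32}$). There the Schur complement is $px^{-1}J+P_1(x)$ with $J$ of rank one, and $px^{-1}$ is \emph{not} a polynomial in $x$, so one cannot simply read off a first integral $Q$. The paper removes the $x^{-1}$ by a genuine dichotomy: either $J$ is singular at $x_r^{-1}$ over $P_1(x_r)$ for some $r\ge2$, in which case one modifies a single column of $P_1$ and the exceptional index of the trichotomy is that $r$, \emph{not} the pivot index $1$; or $J$ is nonsingular at all $x_r^{-1}$, in which case one replaces $px^{-1}J$ by $p(a_2+1+b_0t)J$ for a suitable $b_0$, using $p>m-1$ to dodge every singular residue class. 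Your interpolation idea (``add $p$ times a degree‑$\le2$ polynomial matching the correction at the $m-1$ nonzero $x_i$'') patches the values at $i\ne r$ but gives no control at $i=r$, and your standing assumption that the trichotomy index equals the pivot index is contradicted by the paper's Case~5.1. These are not omitted details; they are the content of the lemma, and the sketch as written does not establish the result.
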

\begin{proof}
As in the proof of Lemma \ref{lem3c}, we may assume that 
$$
P(t):=\begin{pmatrix}O&O&O\\O&pI_{d_{1,1}}&O\\O&O&I_{d_{0,1}}\end{pmatrix}+t\begin{pmatrix}B_{11}&B_{12}&B_{13}\\B_{21}&B_{22}&B_{23}\\B_{31}&B_{32}&B_{33}\end{pmatrix}+pt^2C \in \M_{(n-d_{0,1}-d_{1,1})+d_{1,1}+d_{0,1}}(\Z_p).
$$
Denote $d_{2,1}' := \sum_{r \in \extz_{\ge 2}} d_{r,1} = n-(d_{0,1}+d_{1,1})$ for simplicity.

\phantom{ }

\noindent \textbf{Case 1.} $p \nmid B_{11}$.

The Smith normal form of $B_{11}$ gives $U,V\in \GL_{d_{2,1}'}(\Z_p)$ such that $UB_{11}V=\begin{pmatrix}1&O\\O&B_{11}^{rd}\end{pmatrix}$ for some $B_{11}^{rd}\in\M_{d_{2,1}'-1}(\Z_p)$. Then for every non-zero $x\in X_m$, we have
{\small
\begin{align*}
\cok_{\Z_p/p^2\Z_p}(P(x)) 
&\cong\cok_{\Z_p/p^2\Z_p}\left(\begin{pmatrix}U&O\\O&I_{d_{0,1}+d_{1,1}}\end{pmatrix}P(x)\begin{pmatrix}V&O\\O&I_{d_{0,1}+d_{1,1}}\end{pmatrix}\right) \\ 
&=\cok_{\Z_p/p^2\Z_p}\left(\begin{pmatrix}\begin{matrix}x&O\\O&xB_{11}^{rd}\end{matrix}&xUB_{12}&xUB_{13}\\xB_{21}V&pI_{d_{1,1}}+xB_{22}&xB_{23}\\xB_{31}V&xB_{32}&I_{d_{0,1}}+xB_{33}\end{pmatrix}+px^2C_1\right) \\
&=\cok_{\Z_p/p^2\Z_p}\left(\begin{pmatrix}x&O&xB_{12}^{u}&xB_{13}^{u}\\O&xB_{11}^{rd}&xB_{12}^{d}&xB_{13}^{d}\\xB_{21}^{l}&xB_{21}^{r}&pI_{d_{1,1}}+xB_{22}&xB_{23}\\xB_{31}^{l}&xB_{31}^{r}&xB_{32}&I_{d_{0,1}}+xB_{33}\end{pmatrix}+px^2C_1 \right) \\
&\cong\cok_{\Z_p/p^2\Z_p}\left(\begin{pmatrix}x&O&O&O\\O&xB_{11}^{rd}&xB_{12}^{d}&xB_{13}^{d}\\O&xB_{21}^{r}&pI_{d_{1,1}}+xB_{22}'&xB_{23}'\\O&xB_{31}^{r}&xB_{32}'&I_{d_{0,1}}+xB_{33}'\end{pmatrix}+px^2 \begin{pmatrix}
c & C_{ru}\\ 
C_{ld} & C_{rd}
\end{pmatrix} \right) \\ 
&\cong\cok_{\Z_p/p^2\Z_p} (Q(x))
\end{align*} }%
for 
$$
Q(t) := \begin{pmatrix}O&O&O\\O&pI_{d_{1,1}}&O\\O&O&I_{d_{0,1}} \end{pmatrix}
+t\begin{pmatrix}B_{11}^{rd}&B_{12}^{d}&B_{13}^{d}\\B_{21}^{r}&B_{22}'&B_{23}'\\B_{31}^{r}&B_{32}'&B_{33}'\end{pmatrix}+pt^2C_{rd}\in \M_{n-1}(\Z_p)[t].
$$
Here the last isomorphism is due to the relation
{\small
\begin{align*}
\cok_{\Z_p/p^2\Z_p} \left(\begin{pmatrix}x+pcx^2 &pf_1(x) \\pf_2(x)&  Q(x)\end{pmatrix}\right) 
& \cong \cok_{\Z_p/p^2\Z_p} \left(\begin{pmatrix}x+pcx^2&O\\O&Q(x) - pf_2(x)(x+pcx^2)^{-1}pf_1(x)\end{pmatrix}\right) \\
& \cong \cok_{\Z_p/p^2\Z_p} (Q(x))
\end{align*} }%
for every $x \not\equiv 0 \,\, (\bmod \; p)$.
Since $Q(t)$ is a first integral with $d_{0,Q(x_1)}=d_{0,P(x_1)}$, $d_{0,Q(x_i)}=d_{0,P(x_i)}-1$ for every $2 \le i \le m$ and $d_{1,Q(x_i)}=d_{1,P(x_i)}$ for every $1\le i\le m$, it satisfies the condition (\ref{item3d13}) for $r=1$.

\phantom{ }

\noindent \textbf{Case 2.} $p \mid B_{11}$ and [$p\nmid B_{12}$ or $p\nmid B_{21}$].

We may assume that $p \nmid B_{12}$. Choose invertible matrices $U$ and $V$ such that $UB_{12}V=\begin{pmatrix}1&O\\O&B_{12}^{rd}\end{pmatrix}$ and write $B_{11} = pB_{11}'$. Then for every non-zero $x\in X_m$, we have
{\small
\begin{align*}
\cok_{\Z_p/p^2\Z_p}(P(x))&\cong\cok_{\Z_p/p^2\Z_p}\left(\begin{pmatrix}U&O&O\\O&V^{-1}&O\\O&O&I_{d_{0,1}}\end{pmatrix}P(x)\begin{pmatrix}I_{d_{2,1}'}&O&O\\O&V&O\\O&O&I_{d_{0,1}}\end{pmatrix}\right)\\
&=\cok_{\Z_p/p^2\Z_p}\left(\begin{pmatrix}pxUB_{11}'&\begin{matrix}x&O\\O&xB_{12}^{rd}\end{matrix}&xUB_{13}\\xV^{-1}B_{21}&pI_{d_{1,1}}+xV^{-1}B_{22}V&xV^{-1}B_{23}\\xB_{31}&xB_{32}V&I_{d_{0,1}}+xB_{33}\end{pmatrix}+px^2C_1\right)\\
&=\cok_{\Z_p/p^2\Z_p}\left(\begin{pmatrix}pxB_{11}^{u}&x&O&xB_{13}^{u}\\
pxB_{11}^{d}&O&xB_{12}^{rd}&xB_{13}^{d}\\
xB^u_{21}&p+xB_{22}^{lu}&xB_{22}^{ru}&xB^u_{23}\\
xB^d_{21}&xB_{22}^{ld}&pI_{d_{1,1}-1}+xB_{22}^{rd}&xB^d_{23}\\
xB_{31}&xB_{32}^{l}&xB_{32}^{r}&I_{d_{0,1}}+xB_{33}
\end{pmatrix}+px^2C_1\right)\\
&\cong\cok_{\Z_p/p^2\Z_p}\left(\begin{pmatrix}O&x&O&O\\pxB_{11}^d&O&xB_{12}^{rd}&xB_{13}^{d}\\xB^{u'}_{21}&p&xB^{ru}_{22}&-pB_{13}^{u}+xB^{u'}_{23}\\xB^{d'}_{21}&O&pI_{d_{1,1}-1}+xB^{rd}_{22}&xB^{d'}_{23}\\xB_{31}'&O&xB_{32}^{r}&I_{d_{0,1}}+xB_{33}'\end{pmatrix}+px^2C_2\right)\\
&\cong\cok_{\Z_p/p^2\Z_p}\left(\begin{pmatrix}O&x+pcx^2&O\\xB_l+px^2C_l&O&\begin{pmatrix}O&O\\O&-pB_{13}^{u}\\pI_{d_{1,1}-1}&O\\O&I_{d_{0,1}}\end{pmatrix}+xB_r+px^2C_r\end{pmatrix}\right).
\end{align*} }%
Then 
$$
Q(t):=\begin{pmatrix}O&O&O\\O&O&-pB_{13}^{u}\\O&pI_{d_{1,1}-1}&O\\O&O&I_{d_{0,1}}\end{pmatrix}+t\begin{pmatrix}B_l&B_r\end{pmatrix}+pt^2\begin{pmatrix}C_l&C_r\end{pmatrix} \in \M_{n-1}(\Z_p)[t]
$$ 
satisfies the condition (\ref{item3d12}) for $r=1$.

\phantom{ }

\noindent \textbf{Case 3.} [$p \mid B_{11}, B_{12}, B_{21}$] and [$p\nmid B_{13}$ or $p\nmid B_{31}$].

We may assume that $p \nmid B_{13}$.  
Choose invertible matrices $U$ and $V$ such that $UB_{13}V=\begin{pmatrix}1&O\\O&B_{13}^{rd}\end{pmatrix}$ and write $B_{ij} = pB'_{ij}$ for $(i,j) \in \left \{ (1,1),(1,2) \right \}$.
Then for every non-zero $x\in X_m$, we have
{\small 
\begin{align*}
\cok_{\Z_p/p^2\Z_p}(P(x))&\cong\cok_{\Z_p/p^2\Z_p}\left(\begin{pmatrix}U&O&O\\O&I_{d_{1,1}}&O\\O&O&V^{-1}\end{pmatrix}P(x)\begin{pmatrix}I_{d_{2,1}'}&O&O\\O&I_{d_{1,1}}&O\\O&O&V\end{pmatrix}\right)\\
&=\cok_{\Z_p/p^2\Z_p}\left(\begin{pmatrix}pxUB'_{11}&pxUB'_{12}&\begin{matrix}x&O\\O&xB_{13}^{rd}\end{matrix}\\
xB_{21}&pI_{d_{1,1}}+xB_{22}&xB_{23}V\\
xV^{-1}B_{31}&xV^{-1}B_{32}&I_{d_{0,1}}+xV^{-1}B_{33}V
\end{pmatrix}+px^2C_1\right)\\
&=\cok_{\Z_p/p^2\Z_p}\left(\begin{pmatrix}pxB_{11}^{u}&pxB_{12}^{u}&x&O\\
pxB_{11}^{d}&pxB_{12}^{d}&O&xB_{13}^{rd}
\\xB_{21}&pI_{d_{1,1}}+xB_{22}&xB_{23}^{l}&xB_{23}^{r}\\
xB^u_{31}&xB^u_{32}&1+xB_{33}^{lu}&xB_{33}^{ru}\\
xB_{31}^d&xB_{32}^d&xB_{33}^{ld}&I_{d_{0,1}-1}+xB_{33}^{rd}\end{pmatrix}+px^2C_1\right)\\
&\cong\cok_{\Z_p/p^2\Z_p}\left(\begin{pmatrix}O&O&x&O\\
pxB_{11}^{d}&pxB_{12}^{d}&O&xB_{13}^{rd}\\
xB_{21}'&pI_{d_{1,1}}+xB_{22}'&O&xB_{23}^{r}\\
-pB_{11}^{u}+xB^{u'}_{31}&-pB_{12}^{u}+xB^{u'}_{32}&1&xB_{33}^{ru}\\
xB_{31}^{d'}&xB_{32}^{d'}&O&I_{d_{0,1}-1}+xB_{33}^{rd}
\end{pmatrix}+px^2C_2\right)\\
&\cong\cok_{\Z_p/p^2\Z_p}\left(\begin{pmatrix}O&x+pcx^2&O \\ 
\begin{pmatrix}O&O\\O&pI_{d_{1,1}}\\-pB_{11}^{u}&-pB_{12}^{u}\\O&O\end{pmatrix} +xB_l+px^2C_l&O&\begin{pmatrix}O\\O\\O\\I_{d_{0,1}-1}\end{pmatrix}
+xB_r+px^2C_r\end{pmatrix}\right).
\end{align*} }%
Then 
$$
Q(t):=\begin{pmatrix} O&O&O\\O&pI_{d_{1,1}}&O\\-pB_{11}^{u}&-pB_{12}^{u}&O\\O&O&I_{d_{0,1}-1} \end{pmatrix}+t\begin{pmatrix}B_l&B_r\end{pmatrix}+pt^2\begin{pmatrix}C_l&C_r\end{pmatrix} \in \M_{n-1}(\Z_p)[t]
$$ 
satisfies the condition (\ref{item3d11}) for any $r$. 

\phantom{ }

\noindent \textbf{Case 4.} [$p \mid B_{11}, B_{12}, B_{21}, B_{13}, B_{31}$] and $p\nmid B_{22}$.

Choose invertible matrices $U$ and $V$ such that $UB_{22}V=\begin{pmatrix}1&O\\O&B_{22}^{rd} \end{pmatrix}$ and write $B_{ij} = pB'_{ij}$ for $(i,j) \in \left \{ (1,1),(1,2), (2,1) \right \}$. Let $UV=\begin{pmatrix}d&D_1 \\ D_2&D_3\end{pmatrix} \in \GL_{d_{1,1}}(\Z_p)$. Then for every non-zero $x\in X_m$, we have
{\small
\begin{align*}
\cok_{\Z_p/p^2\Z_p}(P(x))
&\cong\cok_{\Z_p/p^2\Z_p}\left(\begin{pmatrix}I_{d_{2,1}'}&O&O\\O&U&O\\O&O&I_{d_{0,1}}\end{pmatrix}P(x)\begin{pmatrix}I_{d_{2,1}'}&O&O\\O&V&O\\O&O&I_{d_{0,1}}\end{pmatrix}\right)\\
&=\cok_{\Z_p/p^2\Z_p}\left(\begin{pmatrix}pxB'_{11}&pxB'_{12}V&xB_{13}\\pxUB'_{21}&\begin{matrix}x+pd&pD_1\\pD_2&pD_3+xB_{22}^{rd}\end{matrix}&xUB_{23}\\xB_{31}&xB_{32}V&I_{d_{0,1}}+xB_{33}\end{pmatrix}+px^2C_1\right)\\
&=\cok_{\Z_p/p^2\Z_p}\left(\begin{pmatrix}pxB'_{11}&pxB_{12}^{l}&pxB_{12}^{r}&xB_{13}\\
pxB_{21}^{u}&x+pd&pD_1&xB_{23}^{u}\\
pxB_{21}^{d}&pD_2&pD_3+xB_{22}^{rd}&xB_{23}^{d}\\
xB_{31}&xB_{32}^{l}&xB_{32}^{r}&I_{d_{0,1}}+xB_{33}\end{pmatrix}+px^2C_1\right)\\
&\cong\cok_{\Z_p/p^2\Z_p}\left(\begin{pmatrix}
pxB'_{11}&O&pxB_{12}^{r}&xB_{13}'\\
O&x+pd&pD_1&-pdB_{23}^{u}\\
pxB_{21}^{d}&pD_2&pD_3+xB_{22}^{rd}&-pD_2B_{23}^{u}+xB_{23}^{d}\\
xB_{31}'&-pdB_{32}^{l}&-pB_{32}^{l}D_1+xB_{32}^{r}&I_{d_{0,1}}-pdB_{32}^{l}B_{23}^{u}+xB_{33}'
\end{pmatrix}+px^2C_2\right)\\
&\cong\cok_{\Z_p/p^2\Z_p}\left(\begin{pmatrix}xB_{11}'+px^2C_{lu}&O&xB_{ru}+px^2C_{ru}\\O&x+p(d+cx^2)&O\\xB_{ld}+px^2C_{ld}&O&\begin{pmatrix}pD_3&pA_{ru}\\pA_{ld}&I_{d_{0,1}}+pA_{rd}\end{pmatrix} +xB_{rd}+px^2C_{rd}\end{pmatrix}\right).
\end{align*} }%
Then
$$
Q(t) :=\begin{pmatrix} 
O&O&O \\ O&pD_3&pA_{ru} \\ O&pA_{ld}&I_{d_{0,1}}+pA_{rd}
\end{pmatrix}
+t\begin{pmatrix}
B_{11}'&B_{ru} \\ B_{ld}&B_{rd}
\end{pmatrix}
+pt^2\begin{pmatrix} 
C_{lu}&C_{ru} \\ C_{ld}&C_{rd}
\end{pmatrix} \in \M_{n-1}(\Z_p)[t]
$$ 
satisfies the condition (\ref{item3d12}) for $r=1$ if $\rank_{\F_p}(\overline{D_3}) \in \left \{ d_{1,1}-1, d_{1,1}-2 \right \}$. This is true by the inequality
$$
\rank_{\F_p}(\overline{D_3}) \ge \rank_{\F_p}\begin{pmatrix}
\overline{D_2} & \overline{D_3}
\end{pmatrix} - 1 \ge \rank_{\F_p} (\overline{UV}) - 2 = d_{1,1}-2.
$$

\phantom{ }

\noindent \textbf{Case 5.} [$p \mid B_{11}, B_{12}, B_{21}, B_{13}, B_{31}, B_{22}$] and [$p \nmid B_{23}$ or $p\nmid B_{32}$]. 

We may assume that $p\nmid B_{23}$. Choose invertible matrices $U$ and $V$ such that $UB_{23}V=\begin{pmatrix}1&O\\O&B_{23}^{rd}\end{pmatrix}$ and write $B_{ij} = pB'_{ij}$ for each $i+j \le 4$. Then for every non-zero $x\in X_m$, we have
{\small
\begin{align*}
\cok_{\Z_p/p^2\Z_p}(P(x))&\cong\cok_{\Z_p/p^2\Z_p}\left(\begin{pmatrix}I_{d_{2,1}'}&O&O\\O&U&O\\O&O&V^{-1}\end{pmatrix}P(x)\begin{pmatrix}I_{d_{2,1}'}&O&O\\O&U^{-1}&O\\O&O&V\end{pmatrix}\right)\\
&=\cok_{\Z_p/p^2\Z_p}\left(\begin{pmatrix}pxB'_{11}&pxB'_{12}U^{-1}&pxB'_{13}V\\pxUB'_{21}&pI_{d_{1,1}}+pxUB_{22}'U^{-1}&\begin{matrix}x&O\\O&xB_{23}^{rd}\end{matrix}\\pxV^{-1}B'_{31}&xV^{-1}B_{32}U^{-1}&I_{d_{0,1}}+xV^{-1}B_{33}V\end{pmatrix}+px^2C_1\right)\\
&=\cok_{\Z_p/p^2\Z_p}
\left(\begin{pmatrix}pxB_{11}'&pxB^l_{12}&pxB^r_{12}&pxB_{13}^{l}&pxB_{13}^{r}\\
pxB_{21}^{u}&p+pxb_{22}^{lu}&pxB_{22}^{ru}&x&O\\
pxB_{21}^{d}&pxB_{22}^{ld}&pI_{d_{1,1}-1}+pxB_{22}^{rd}&O&xB_{23}^{rd}\\
pxB_{31}^u&xb_{32}^{lu}&xB_{32}^{ru}&1+xb_{33}^{lu}&xB_{33}^{ru}\\
pxB_{31}^d&xB_{32}^{ld}&xB_{32}^{rd}&xB_{33}^{ld}&I_{d_{0,1}-1}+xB_{33}^{rd}
\end{pmatrix}+px^2C_1\right)\\
&\cong\cok_{\Z_p/p^2\Z_p}
\left(\begin{pmatrix}
pxB_{11}'&pxB^l_{12}&pxB^r_{12}&O&pxB_{13}^{r}\\
O&p&O&x&O\\
pxB_{21}^{d}&pxB_{22}^{ld}&pI_{d_{1,1}-1}+pxB_{22}^{rd}&O&xB_{23}^{rd}\\
pA_1+pxB_{31}^{u'}&pa_2+xb_{32}^{lu'}&pA_3+xB_{32}^{ru'}&1&xB_{33}^{ru}\\
pxB_{31}^{d'}&pA_4+xB_{32}^{ld'}&xB_{32}^{rd'}&O&I_{d_{0,1}-1}+xB_{33}^{rd}
\end{pmatrix}+px^2C_2\right)\\
&\cong\cok_{\Z_p/p^2\Z_p}
\left(\begin{pmatrix}
*&*&*&O&*\\
O&O&O&x+pcx^2&O\\
*&*&pI_{d_{1,1}-1}+*&O&*\\
pA_1+*&p(x+pcx^2)^{-1}+pa_2+*&pA_3+*&O&*\\
*&pA_4+*&*&O&I_{d_{0,1}-1}+* 
\end{pmatrix} \right)\\
&\cong\cok_{\Z_p/p^2\Z_p}(px^{-1}J+P_1(x)),
\end{align*} }%
where each * is of the form $xB+px^2C$, {\small $J :=\begin{pmatrix}O&O&O&O\\O&O&O&O\\O&-1&O&O\\O&O&O&O\end{pmatrix} \in \M_{n-1}(\Z_p)$ }%
($J_{n-d_{0,1}, d_{2,1}'+1}=-1$) and 
{\small $$
P_1(t) := \begin{pmatrix}O&O&O&O\\O&O&pI_{d_{1,1}-1}&O\\pA_1&pa_2&pA_3&O\\O&pA_4&O&I_{d_{0,1}-1}\end{pmatrix} + tB'+pt^2C' \in \M_{n-1}(\Z_p)[t].
$$ }%
Now we have $d_{0,P_1(x_i)}=d_{0,i}-1$ and $\lvert d_{1,P_1(x_i)}-d_{1,i}\rvert\le1$ by Lemma \ref{lem3d}. Precisely, 
$$
d_{1, P_1(x_1)} = \begin{cases}
d_{1,1}-1 &\text{if } p \mid A_1 \text{ and } p \mid a_2, \\ 
d_{1,1} &\text{otherwise}
\end{cases}
$$
and for every $2 \le i \le m$, 

$$
d_{1,P_1(x_i)} = \begin{cases}
d_{1,i}+1 &\text{if }J\text{ is singular of order }1\text{ at }x_i^{-1}\text{ over }P_1(x_i),\\ 
d_{1,i}-1 &\text{if }J\text{ is singular of order }1\text{ at }0\text{ over }P_1(x_i),\\ 
d_{1,i} &\text{otherwise.}
\end{cases}
$$
Let $P_1(t)=\begin{pmatrix}q_1(t) \; \cdots \; q_{n-1}(t)\end{pmatrix}$ and $q_i(t) = \mathbf{a}_i +t\mathbf{b}_i +pt^2\mathbf{c}_i$ for some $\mathbf{a}_i, \mathbf{b}_i, \mathbf{c}_i \in \Z_p^{n-1}$. 

\phantom{ }

\noindent \textbf{Case 5.1.} $J$ is singular of order $1$ at $x_r^{-1}$ over $P_1(x_r)$ for some $2\le r\le m$.

Let $e_i$ be the $i$-th unit vector in $\Z_p^{n-1}$, so that the non-zero column of $J$ is $-e_{n-d_{0,1}}$. The proof of Lemma \ref{lem3d} tells us that the $(d_{2,1}'+1)$-th column of $P_1(x_r)$ is a $\Z_p$-linear combination of other columns of $P_1(x_r)$, i.e.
$$
q_{d_{2,1}'+1}(x_r) - px_r^{-1}e_{n-d_{0,1}} \equiv \sum_{i\neq d_{2,1}'+1}c_iq_i(x_r) \,\, (\bmod \; p^2)
$$
for some $c_i \in \Z_p$ ($i \neq d_{2,1}'$). Then,
{\small
\begin{align*}
(pt^{-1}J+P_1(t))\begin{pmatrix}I_{d_{2,1}'}&\begin{matrix}-c_1 \\ \vdots \\ -c_{d_{2,1}'}\end{matrix}&O\\O&1&O\\O&\begin{matrix}-c_{d_{2,1}'+2} \\ \vdots\\-c_{n-1}\end{matrix}&I_{d_{0,1}+d_{1,1}-2}\end{pmatrix}
=\begin{pmatrix}q_1(t) \; \cdots \; q_{d_{2,1}'}(t)&g(t)&q_{d_{2,1}'+2}(t) \; \cdots \; q_{n-1}(t)\end{pmatrix}
\end{align*} }%
and $g(x_r) \equiv 0 \,\, (\bmod \; p^2)$ so we have $g(t) \equiv (t-x_r)f(t) \,\, (\bmod \; p^2)$ for
$$ 
f(t) := pt^{-1}x_r^{-1}e_{n-d_{0,1}} + \mathbf{a}'+pt \mathbf{b}' 
$$
($\mathbf{a}', \mathbf{b}' \in \Z_p^{n-1}$). Now 
$$
P_2(t) := \begin{pmatrix}q_1(t) \; \cdots \;  q_{d_{2,1}'}(t)&tf(t)&q_{d_{2,1}'+2}(t) \; \cdots \; q_{n-1}(t)\end{pmatrix} \in \M_{n-1}(\Z_p)[t]
$$
satisfies the condition (\ref{item3d11}) or (\ref{item3d13}) for the same $r$.
\begin{itemize}
    \item For $i \not\in \left \{ 1,r \right \}$, we have $\cok_{\Z_p/p^2\Z_p}(P(x_i))\cong\cok_{\Z_p/p^2\Z_p}(px_i^{-1}J+P_1(x_i))\cong\cok_{\Z_p/p^2\Z_p}(P_2(x_i))$ so $d_{0, P_2(x_i)} = d_{0,i}-1$ and $d_{1, P_2(x_i)} = d_{1,i}$.

    \item For $i=1$, we have {\small $P_2(0)=\begin{pmatrix}O&O&O&O\\O&O&pI_{d_{1,1}-1}&O\\pA_1&px_r^{-1}&pA_3&O\\O&O&O&I_{d_{0,1}-1}\end{pmatrix}$} so $d_{0, P_2(x_1)} = d_{0,1}-1$ and $d_{1, P_2(x_1)} = d_{1,1}$.

    \item For $i=r$, we have
    \begin{equation*}
    \begin{split}
    \cok_{\Z_p/p^2\Z_p}(P(x_r)) & \cong\cok_{\Z_p/p^2\Z_p}(px_r^{-1}J+P_1(x_r)) \\
    & \cong\cok_{\Z_p/p^2\Z_p}\begin{pmatrix}q_1(t) \; \cdots \; q_{d_{2,1}'}(t)&q_{d_{2,1}'+2}(t) \; \cdots \; q_{n-1}(t)\end{pmatrix}    
    \end{split}    
    \end{equation*}
    so $(d_{0,P_2(x_r)}, d_{1,P_2(x_r)}) \in \left \{ (d_{0,r}, d_{1,r}), (d_{0,r}-1, d_{1,r}), (d_{0,r}-1, d_{1,r}+1) \right \}$.
\end{itemize}

\phantom{ }

\noindent \textbf{Case 5.2.} $J$ is not singular of order $1$ at $x_r^{-1}$ over $P_1(x_r)$ for all $2\le r\le m$.

Let $P_b(t) := p(a_2+1+bt)J + P_1(t) \in \M_{n-1}(\Z_p)[t]$ for $b \in \left \{ 0, 1, \cdots, p-1 \right \}$. Then we have
$$
\cok_{\Z_p/p^2\Z_p}(px_r^{-1}J+P_1(x_r))\cong\cok_{\Z_p/p^2\Z_p}(P_{b}(x_r))
$$
unless $J$ is singular of order $1$ at $a_2+1+bx_r$ over $P_1(x_r)$. Since $p>m-1$, one can choose $b_0 \in \left \{ 0, 1, \cdots, p-1 \right \}$ such that $J$ is not singular of order $1$ at $a_2+1+b_0x_r$ over $P_1(x_r)$ for every $2 \le r \le m$. Now $P_{b_0}(t)$ is a first integral and satisfies the condition (\ref{item3d11}) for $r=1$.

\begin{itemize}
    \item For $i \neq 1$, we have $\cok_{\Z_p/p^2\Z_p}(P(x_i))\cong\cok_{\Z_p/p^2\Z_p}(px_i^{-1}J+P_1(x_i))\cong\cok_{\Z_p/p^2\Z_p}(P_{b_0}(x_i))$ so $d_{0, P_{b_0}(x_i)} = d_{0,i}-1$ and $d_{1, P_{b_0}(x_i)} = d_{1,i}$.

    \item For $i=1$, {\small $P_{b_0}(0)=\begin{pmatrix}O&O&O&O\\O&O&pI_{d_{1,1}-1}&O\\pA_1&-p&pA_3&O\\O&pA_4&O&I_{d_{0,1}-1}\end{pmatrix}$} so $d_{0, P_{b_0}(x_1)} = d_{0,1}-1$ and $d_{1, P_{b_0}(x_1)} = d_{1,1}$.
\end{itemize}

\phantom{ }

In the remaining cases, we have $B_{ij} \equiv 0 \,\, (\bmod \; p)$ except for $(i,j)=(3,3)$ so that $d_{0,1}=\underset{1\le i\le m}{\max}d_{0,i}$. By the same reason, we have $d_{0,1} = \cdots = d_{0, m}$. Then the matrix $I_{d_{0,1}}+x_iB_{33}+px_i^2C_{33}$ is invertible for each $i$ and thus $Q(t):=\begin{pmatrix}O&O\\O&pI_{d_{1,1}}\end{pmatrix}+t\begin{pmatrix}B_{11}&B_{12}\\B_{21}&B_{22}\end{pmatrix}+pt^2\begin{pmatrix}C_{11}&C_{12}\\C_{21}&C_{22}\end{pmatrix}$ satisfies the condition (\ref{item3d2}).
\end{proof}

 \subsection{Necessary condition for an element of \texorpdfstring{$\calC_{X_4}$}{calCX4}} \label{Sub32}

For a given $(n;H_1,\cdots,H_m)\in\calC_{X_m,1,\infty}$, there exists a first integral $P(t)\in\M_n(\Z_p)[t]$ such that $\cok(P(x_i))\cong H_i$ for each $i$. We will use Lemma \ref{lem3e} repeatedly until we obtain $(n';H'_1,\cdots,H'_m)\in\calC_{X_m,1,\infty}$ which satisfies $\rank_{\F_p}(H'_i/pH'_i)=n'$ for each $i$. For the case $m=4$, the elements of $(n';H'_1,\cdots,H'_4) \in \calC_{X_4,1,2}$ which satisfies $\rank_{\F_p}(H'_i/pH'_i)=n'$ for each $i$ bijectively corresponds to the elements of $\calC_{X_4,2,1}$ by Proposition \ref{prop2c}(\ref{item2c4}). The following lemma, which provides an information about the set $\calC_{X_4,2,1}$, is a generalization of Lemma \ref{lem3c}. 

\begin{lemma} \label{lem3f}
If $(n;H_1,\cdots,H_m)\in\calC_{X_m,l,1}$ and $l \le m$, then
$$
\sum_{i=1}^{m}d_{0, i} - (m-l)\alpha_0\ge0
$$
for some non-negative integer $\alpha_0\ge\underset{1\le i\le m}{\max}d_{0,i}$.
\end{lemma}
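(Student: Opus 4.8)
The plan is to reduce the whole statement modulo $p$ and then read off the inequality from the Smith normal form of the reduced matrix polynomial over the principal ideal domain $\F_p[t]$. Since $(n;H_1,\cdots,H_m)\in\calC_{X_m,l,1}$, there is an $l$-th integral $P(t)\in\M_n(\Z_p)[t]$ with $\cok_{\Z_p/p\Z_p}(P(x_i))\cong H_i/pH_i$ for each $i$, and because $k=1$ only the reduction mod $p$ is relevant. First I would observe that $\overline{P}(t)\in\M_n(\F_p[t])$ has degree at most $l$ — the coefficients of $t^{l+1}, t^{l+2}, \cdots$ are divisible by $p$ — and that $d_{0,i}$, which by definition is $n-\rank_{\F_p}(H_i/pH_i)$, equals $\rank_{\F_p}(\overline{P(x_i)})=\rank_{\F_p}(\overline{P}(\overline{x_i}))$, where $\overline{x_i}\in\F_p$ is the reduction of $x_i$ and the $\overline{x_i}$ are distinct.

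Next I would compute the Smith normal form of $\overline{P}(t)$ over $\F_p[t]$, say $\overline{P}(t)=U(t)\,\diag(d_1(t), \cdots, d_n(t))\,V(t)$ with $U(t),V(t)\in\GL_n(\F_p[t])$ and $d_1\mid d_2\mid\cdots\mid d_n$, where $d_{r+1}=\cdots=d_n=0$ and $r$ is the rank of $\overline{P}(t)$ over $\F_p(t)$. Since $\det U(t)$ and $\det V(t)$ lie in $\F_p^\times$, the specializations $U(\overline{x_i})$ and $V(\overline{x_i})$ are invertible over $\F_p$, so $d_{0,i}=\#\{1\le j\le r:d_j(\overline{x_i})\ne0\}=r-\epsilon_i$, where $\epsilon_i:=\#\{1\le j\le r:d_j(\overline{x_i})=0\}$. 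The numerical input I would then use is that the product $d_1(t)\cdots d_r(t)$ is the $r$-th determinantal divisor of $\overline{P}(t)$, hence divides some nonzero $r\times r$ minor of $\overline{P}(t)$; that minor has degree at most $lr$ since all entries have degree at most $l$, so $\sum_{j=1}^{r}\deg d_j\le lr$.

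Finally, since distinct elements of $\F_p$ among the roots of a nonzero $d_j$ number at most $\deg d_j$, I would estimate
$$
\sum_{i=1}^{m}\epsilon_i=\sum_{j=1}^{r}\#\{1\le i\le m:d_j(\overline{x_i})=0\}\le\sum_{j=1}^{r}\deg d_j\le lr,
$$
so that $\sum_{i=1}^{m}d_{0,i}=mr-\sum_{i=1}^{m}\epsilon_i\ge(m-l)r$. Taking $\alpha_0:=\max_{1\le i\le m}d_{0,i}$, which equals $r-\min_i\epsilon_i$ and so is a non-negative integer with $\alpha_0\ge d_{0,i}$ for all $i$ and $\alpha_0\le r$, I conclude $\sum_{i=1}^{m}d_{0,i}-(m-l)\alpha_0\ge(m-l)r-(m-l)r=0$. (If $r=0$ then every $d_{0,i}=0$ and one takes $\alpha_0=0$; the hypothesis $l\le m$ is used only to ensure $m-l\ge0$.)

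I do not anticipate a genuine obstacle: at $k=1$ the problem is insensitive to everything except the mod-$p$ reduction, and over $\F_p[t]$ it becomes a short computation. The only place needing a little care is the degree bookkeeping via determinantal divisors and the observation that $U,V$ remain invertible after specializing $t$, which is immediate since their determinants are nonzero constants. This argument also specializes to a new proof of Lemma \ref{lem3c} (the case $l=1$).
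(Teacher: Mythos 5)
Your proof is correct, and it is a genuinely different argument from the paper's. The paper proves Lemma~\ref{lem3f} by induction on $n$: after normalizing the constant term it either peels off a row and column (when $d_{0,1}<n$) and invokes the inductive hypothesis, or (when $d_{0,1}=n$) passes to the $ln \times ln$ companion-type matrix $P_1(t)$ to reduce the $l$-th-integral statement to the first-integral case, which is itself handled by the separate induction of Lemma~\ref{lem3c}. Your approach instead observes that at the level $k=1$ only the mod-$p$ reduction $\overline{P}(t)\in\M_n(\F_p[t])$ is visible, that $\overline{P}$ has degree $\le l$, and that the Smith normal form of $\overline{P}$ over the PID $\F_p[t]$ together with the degree bound $\sum_{j\le r}\deg d_j\le lr$ on the $r$-th determinantal divisor gives the inequality directly: since each nonzero invariant factor $d_j$ can vanish at at most $\deg d_j$ of the distinct points $\overline{x_1},\dots,\overline{x_m}$, one gets $\sum_i\epsilon_i\le lr$, hence $\sum_i d_{0,i}\ge (m-l)r\ge (m-l)\alpha_0$ with $\alpha_0=\max_i d_{0,i}\le r$. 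This bypasses the induction entirely and, as you note, simultaneously reproves Lemma~\ref{lem3c} as the case $l=1$. The trade-off is that your argument is tailored to the $k=1$ level (it sees only $\F_p[t]$), whereas the paper's reduction machinery (Proposition~\ref{prop3a}, Lemmas~\ref{lem3c}--\ref{lem3e}) is built to also track the $d_{1,i}$'s for the $k=2$ analysis that follows in Theorem~\ref{thm3g}, so the paper's proof is part of a larger apparatus that your shortcut does not replace. One cosmetic slip: in your final display the middle term should read $(m-l)r-(m-l)\alpha_0$ before being bounded below by $0$ using $\alpha_0\le r$; the conclusion is unaffected.
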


\begin{proof}
We use induction on $n$. The case $n=1$ follows from Example \ref{ex2d}. Now assume that $n>1$ and the lemma holds for every $n'<n$. Suppose that there exists $(n;H_1,\cdots,H_m)\in\calC_{X_m,l,1}$ such that 
$\sum_{i=1}^{m}d_{0,i} < (m-l)\underset{1\le i\le m}{\max} d_{0,i}$. 
As in the proof of Lemma \ref{lem3c}, we may assume that $d_{0,1}=\underset{1\le i\le m}{\max}d_{0,i}$ and there exists an $l$-th integral
$$
P(t)=\begin{pmatrix}O&O\\O&I_{d_{0,1}}\end{pmatrix}+tA_1+\cdots+t^lA_l \in \M_n(\Z_p)[t]
$$ 
such that $\cok_{\Z_p/p\Z_p}(P(x_i))\cong H_i/pH_i$ for each $i$.

Assume that $d_{0,1} < n$ and let $Q(t)\in\M_{n-1}(\Z_p)[t]$ be the $l$-th integral which is obtained by eliminating first column and row from $P(t)$. Then we have $d_{0,Q(x_1)}=d_{0,1}$ and $d_{0,Q(x_i)}\le d_{0,i} \le d_{0,1}$ for $2\le i\le m$. The induction hypothesis implies that
$$\sum_{i=1}^{m} d_{0,i}-(m-l)\underset{1\le i\le m}{\max} d_{0,i}
\ge \sum_{i=1}^{m} d_{0,Q(x_i)}-(m-l)\underset{1\le i\le m}{\max} d_{0,Q(x_i)} \ge 0,
$$
which is a contradiction. Thus we have $d_{0,1}=n$. Now consider the first integral
$$
P_1(t) := \begin{pmatrix}tI&&&&A_0
\\-I&tI&&&A_1\\&\ddots&\ddots&&\vdots
\\&&-I&tI&A_{l-2}
\\&&&-I&A_{l-1}+tA_l
\end{pmatrix}\in\M_{ln}(\Z_p)[t],
$$
where $A_0=P(0)=\begin{pmatrix}O&O\\O&I_{d_{0,1}}\end{pmatrix}$. It satisfies $\cok(P_1(t))\cong\cok(P(t))$ so that $d_{0,P_1(x_i)}=d_{0,i}+(l-1)n$ for every $1\le i\le m$ and $\underset{1\le i\le m}{\max} d_{0,P_1(x_i)} = ln$. Then we have
$$
\sum_{i=1}^{m}d_{0,i}-(m-l)n=\sum_{i=1}^{m}d_{0,P_1(x_i)}-(m-1)ln\ge0
$$
by Lemma \ref{lem3c}, which is a contradiction. This finishes the proof.
\end{proof}

Now we prove a necessary condition for an element of $\calC_{X_m,1,2}$ using Lemma \ref{lem3e} and \ref{lem3f}. 


\begin{theorem} \label{thm3g}
If $(n;H_1,\cdots,H_m)\in\calC_{X_m,1,2}$, then
\begin{gather*}
\sum_{i=1}^md_{0,i}-(m-1)\alpha_0\ge 0, \\
\left(\sum_{i=1}^md_{0,i}-(m-1)\alpha_0\right)+\left(\sum_{i=1}^m\min(d_{1,i},\alpha_1)-(m-2)\alpha_1\right)\ge 0
\end{gather*}
for some non-negative integers $\alpha_0$ and $\alpha_1$ such that $\alpha_0\ge\underset{1\le i\le m}{\max}d_{0,i}$ and $2 \alpha_0+\alpha_1\ge\underset{1\le i\le m}{\max}(2d_{0,i}+d_{1,i})$.
\end{theorem}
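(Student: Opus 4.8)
The plan is to argue by induction on $n$, peeling off one row and column at a time via the reduction of Lemma \ref{lem3e} while tracking how the numbers $d_{0,\bullet}$ and $d_{1,\bullet}$ move. Fix a first integral $P(t)\in\M_n(\Z_p)[t]$ with $\cok_{\Z_p/p^2\Z_p}(P(x_i))\cong H_i/p^2H_i$ for each $i$, so that $d_{0,P(x_i)}=d_{0,i}$ and $d_{1,P(x_i)}=d_{1,i}$. The case $m=1$ is trivial (take $\alpha_0=\alpha_1=0$), and the base case of the induction can be taken to be $n=0$, where $(0;1,\dots,1)$ again admits $\alpha_0=\alpha_1=0$; alternatively, for $n=1$ one reads the finitely many admissible patterns of $(d_{0,i},d_{1,i})$ off Example \ref{ex2d} and the shape of rank-one cokernels modulo $p^2$, and checks the claim directly with $(\alpha_0,\alpha_1)=(1,0)$.

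For the inductive step I separate two cases. First, suppose $d_{0,i}=0$ for every $i$; here no reduction is needed. By Proposition \ref{prop2c}(\ref{item2c4}) the image $\varphi_{1,1}(n;H_1,\dots,H_m)=(n;pH_1,\dots,pH_m)$ lies in $\calC_{X_m,2,1}$, and a direct computation gives $d_{0,pH_i}=d_{1,i}$. Applying Lemma \ref{lem3f} with $l=2$ yields an integer $\beta\ge\max_i d_{1,i}$ with $\sum_i d_{1,i}-(m-2)\beta\ge0$, and then $(\alpha_0,\alpha_1)=(0,\beta)$ works: the headroom conditions read $0\ge0$ and $\beta\ge\max_i(2\cdot0+d_{1,i})$, the first inequality is $-(m-1)\cdot0\ge0$, and the second is $0+\big(\sum_i d_{1,i}-(m-2)\beta\big)\ge0$ since $\min(d_{1,i},\beta)=d_{1,i}$.

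Now suppose some $d_{0,i}>0$, so Lemma \ref{lem3e} applies. If its alternative (\ref{item3d2}) holds, then $d_{0,P(x_i)}\equiv c$ with $c=\max_i d_{0,i}>0$, and we obtain a first integral $Q(t)\in\M_{n-c}(\Z_p)[t]$ with $d_{0,Q(x_i)}=0$ and $d_{1,Q(x_i)}=d_{1,i}$; otherwise alternative (\ref{item3d1}) holds, and we get $Q(t)\in\M_{n-1}(\Z_p)[t]$ in one of the subcases (\ref{item3d11})--(\ref{item3d13}), with $\sum_i d_{0,Q(x_i)}=\sum_i d_{0,i}-\delta$ where $\delta=m$ in (\ref{item3d11}) and $\delta=m-1$ in (\ref{item3d12}), (\ref{item3d13}), and with $d_{1,Q(x_i)}=d_{1,i}$ for all $i$ except one index $r$, where $d_{1,Q(x_r)}-d_{1,r}$ lies in $\{0,1\}$, $\{-2,-1\}$, $\{0\}$ respectively. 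In every situation $Q$ realizes a tuple of strictly smaller size, so the inductive hypothesis provides $(\alpha_0',\alpha_1')$ for it. Under (\ref{item3d2}) one has $\sum_i d_{0,Q(x_i)}=0$, forcing $\alpha_0'=0$, and I set $(\alpha_0,\alpha_1)=(c,\alpha_1')$; under (\ref{item3d1}) I set $(\alpha_0,\alpha_1)=(\alpha_0'+1,\alpha_1')$. Transporting the four conditions back is then a short estimate: the headroom $\alpha_0\ge\max_i d_{0,i}$ uses $d_{0,Q(x_i)}\ge d_{0,i}-1$; the headroom $2\alpha_0+\alpha_1\ge\max_i(2d_{0,i}+d_{1,i})$ uses $2d_{0,Q(x_i)}+d_{1,Q(x_i)}\ge 2d_{0,i}+d_{1,i}-2$ (with the factor $2$ absorbing the shift of $\alpha_0$); the first inequality uses $\delta\ge m-1$; and for the second one uses that $t\mapsto\min(t,\alpha_1')$ is nondecreasing and $1$-Lipschitz, so $\sum_i\min(d_{1,i},\alpha_1')$ differs from $\sum_i\min(d_{1,Q(x_i)},\alpha_1')$ by at most $1$.

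The main obstacle is precisely this last bookkeeping: one must run through all four alternatives of Lemma \ref{lem3e} and check that the chosen shift of $(\alpha_0,\alpha_1)$ keeps all four inequalities valid. The only genuinely tight subcase is (\ref{item3d11}), where $d_{1,Q(x_r)}$ can exceed $d_{1,r}$, so that the ``$d_1$-part'' of the second inequality worsens by $1$; this is exactly balanced by the extra $+1$ gained in the ``$d_0$-part'', since there $\sum_i d_{0,\bullet}$ drops by $m$ rather than $m-1$. The remaining subcases, and the two easy cases ($d_{0,i}\equiv0$ and $n\le1$), become routine once Lemmas \ref{lem3e} and \ref{lem3f} are available.
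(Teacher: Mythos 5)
Your proposal is correct and follows essentially the same route as the paper: induction on $n$, with the all-$d_{0,i}=0$ case handled by Proposition \ref{prop2c}(\ref{item2c4}) plus Lemma \ref{lem3f} at $l=2$, and the remaining cases reduced via Lemma \ref{lem3e} with the bookkeeping $(\alpha_0,\alpha_1)\mapsto(\alpha_0'+1,\alpha_1')$ in alternative (\ref{item3d1}) and $(\alpha_0,\alpha_1)=(c,\alpha_1')$ in alternative (\ref{item3d2}). Your identification of (\ref{item3d11}) as the only tight subcase, balanced because $\sum_i d_{0,\bullet}$ there drops by $m$ rather than $m-1$, is exactly the mechanism the paper uses. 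One small slip: the parenthetical suggestion to take $(\alpha_0,\alpha_1)=(1,0)$ for the $n=1$ base case is not always valid — if $d_{0,i}=0$ for all $i$ but some $d_{1,i}=1$ (e.g.\ $P(t)=p$, all $H_i/p^2H_i\cong\Z_p/p\Z_p$), then the first inequality $\sum_i d_{0,i}-(m-1)\cdot1=-(m-1)<0$ fails; one must instead take $\alpha_0=0$ there, which is exactly what the $d_{0,i}\equiv0$ branch produces. Since you also offer $n=0$ as a base and the $d_{0,i}\equiv0$ branch of the inductive step then covers $n=1$ uniformly, this does not affect the validity of the argument.
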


\begin{proof}
We use induction on $n$. The case $n=1$ follows from Example \ref{ex2d}. Now assume that $n>1$ and the theorem holds for every $n'<n$. Let $P(t)\in\M_n(\Z_p)[t]$ be a first integral such that $\cok_{\Z_p/p^2\Z_p}(P(x_i)) \cong H_i/p^2H_i$ for each $i$. If $d_{0,i}=0$ for each $i$, then we have $(n; pH_1, \cdots, pH_m) \in \calC_{X_m, 2,1}$ by Proposition \ref{prop2c}(\ref{item2c4}) and $d_{0, pH_i} = d_{1,i}$. By Lemma \ref{lem3f}, there exists $\beta_0 \ge \underset{1\le i\le m}{\max} d_{0,pH_i}$ such that $\sum_{i=1}^{m} d_{0,pH_i} - (m-2) \beta_0 \ge 0$. Now $(\alpha_0, \alpha_1) = (0, \beta_0)$ satisfies the desired properties. Otherwise, choose a first integral $Q(t) \in \M_{n'}(\Z_p)$ with $n' < n$ satisfying one of the conditions in Lemma \ref{lem3e}. By the induction hypothesis, we have 
\begin{gather*}
\sum_{i=1}^md_{0,Q(x_i)}-(m-1)\alpha'_0\ge 0, \\
\left(\sum_{i=1}^m d_{0,Q(x_i)}-(m-1)\alpha'_0\right)+\left(\sum_{i=1}^m\min(d_{1,Q(x_i)},\alpha'_1)-(m-2)\alpha'_1\right)\ge 0
\end{gather*}
for some $\alpha'_0,\alpha_1'\in\Z_{\ge0}$ such that $\alpha'_0 \ge \underset{1\le i\le m}{\max}d_{0,Q(x_i)}$ and $2\alpha'_0+\alpha'_1 \ge \underset{1\le i\le m}{\max}(2d_{0,Q(x_i)}+d_{1,Q(x_i)})$. Now we divide the cases according to the condition of $Q(t)$.

\phantom{ }

\noindent \textbf{Case 1.} $Q(t)$ satisfies the condition (\ref{item3d1}) of Lemma \ref{lem3e}: Let $(\alpha_0, \alpha_1) = (\alpha_0'+1, \alpha_1')$. Then we have
\begin{equation*}
\begin{split}
\alpha_0 & \ge \underset{1\le i\le m}{\max}d_{0,Q(x_i)} + 1 = \underset{1\le i\le m}{\max}d_{0,i}, \\
2\alpha_0+\alpha_1 & \ge \underset{1\le i\le m}{\max}(2(d_{0,Q(x_i)}+1)+d_{1,Q(x_i)})\ge\underset{1\le i\le m}{\max}(2d_{0,i}+d_{1,i}), \\
\sum_{i=1}^md_{0,i}-(m-1)\alpha_0 & \ge \sum_{i=1}^md_{0,Q(x_i)}-(m-1)\alpha'_0 \ge 0.
\end{split}   
\end{equation*}
If $Q(t)$ satisfies the condition (\ref{item3d11}), then we have
\begin{equation*}
\begin{split}
0 & \le \left(\sum_{i=1}^{m}d_{0,Q(x_i)}-(m-1)\alpha'_0+1\right)+\left(\sum_{i=1}^{m}\min(d_{1,Q(x_i)},\alpha'_1)-(m-2)\alpha'_1-1\right) \\
& \le \left(\sum_{i=1}^{m}d_{0,i}-(m-1)\alpha_0\right)+ \left(\sum_{i=1}^{m}\min(d_{1,i},\alpha_1)-(m-2)\alpha_1\right).
\end{split}    
\end{equation*}
If $Q(t)$ satisfies the condition (\ref{item3d12}) or (\ref{item3d13}), then we have
\begin{equation*}
\begin{split}
0 & \le \left(\sum_{i=1}^{m}d_{0,Q(x_i)}-(m-1)\alpha'_0\right)+\left(\sum_{i=1}^{m}\min(d_{1,Q(x_i)},\alpha'_1)-(m-2)\alpha'_1\right) \\
& \le \left(\sum_{i=1}^{m}d_{0,i}-(m-1)\alpha_0\right)+ \left(\sum_{i=1}^{m}\min(d_{1,i},\alpha_1)-(m-2)\alpha_1\right).
\end{split}    
\end{equation*}

\phantom{ }

\noindent \textbf{Case 2.} $Q(t)$ satisfies the condition (\ref{item3d2}) of Lemma \ref{lem3e}: In this case, $\alpha'_0=0$ and $\sum_{i=1}^{m} \min(d_{1, i}, \alpha'_1) - (m-2)\alpha'_1 \ge 0$ for some $\alpha'_1 \ge \underset{1\le i\le m}{\max}d_{1,i}$. Let $d_0 = d_{0,i}$ for each $i$ and $(\alpha_0, \alpha_1) = (d_0, \alpha_1')$. Then we have
\begin{equation*}
\begin{split}
\alpha_0 & \ge \underset{1\le i\le m}{\max}d_{0,i}, \\
2\alpha_0+\alpha_1 & \ge 2\underset{1\le i\le m}{\max} d_{0,i} + \underset{1\le i\le m}{\max} d_{1,i} \ge\underset{1\le i\le m}{\max}(2d_{0,i}+d_{1,i}), \\
\sum_{i=1}^md_{0,i}-(m-1)\alpha_0 & = md_0 - (m-1)d_0 \ge 0, \\
d_0 & \le \left(\sum_{i=1}^{m}d_{0,i}-(m-1)\alpha_0\right)+ \left(\sum_{i=1}^{m}\min(d_{1,i},\alpha_1)-(m-2)\alpha_1\right). \qedhere
\end{split}   
\end{equation*}
\end{proof}

As a corollary of Theorem \ref{thm3g}, we prove one direction of Theorem \ref{thm1h} for the case $m=4$.

\begin{corollary} \label{cor3h}
The inclusion
$$
\calC_{X_4}\subset\left \{ (H_1, H_2, H_3, H_4) \in \calM_{\Z_p}^4 : \begin{matrix}
s_1=s_2=s_3=s_4, \; 3d_{1, i} \le D_1 (1 \le i \le 4) \text{ and} \\
d_{1, i} + 2(d_{1,j}+d_{2,j}) \le D_1+D_2 \;\; (1 \le i,j \le 4)
\end{matrix} \right \}
$$
holds where $s_i=\rank_{\F_p}(H_i/pH_i)$ ($1\le i\le 4$) and $D_r=\sum_{i=1}^{4} d_{r,i}$ for $r=1, 2$.
\end{corollary}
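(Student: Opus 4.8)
The plan is to derive the corollary from Theorem~\ref{thm3g} with $m=4$, after rewriting membership in $\calC_{X_4}$ as membership in $\calC_{X_4,1,2}$. The equality $s_1=s_2=s_3=s_4$ is immediate (see the paragraph after Theorem~\ref{thm1h}, or Proposition~\ref{prop2c}(\ref{item2c3})); write $s$ for the common value.

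First I would run the following chain. Given $(H_1,\dots,H_4)\in\calC_{X_4}$, Proposition~\ref{prop2c}(\ref{item2c3}) gives $(s;pH_1,\dots,pH_4)\in\calC_{X_4,1,\infty}$, and Proposition~\ref{prop2c}(\ref{item2c22}) with $l=1$ gives $\calC_{X_4,1,\infty}=\calC_{X_4,1,2}$, so $(s;pH_1,\dots,pH_4)\in\calC_{X_4,1,2}$. The crucial bookkeeping point is the index shift caused by passing from $H_i$ to $pH_i$: since $pH_i\cong\Z_p^{d_{\infty,i}}\times\prod_{r\ge1}(\Z/p^r\Z)^{d_{r+1,i}}$ and $s(H_i)=s$, the element $(s;pH_1,\dots,pH_4)$ of $\calB_4$ has, in its $i$-th slot, the $\calB_4$-invariant ``$d_0$'' equal to $d_{1,i}$ and the $\calB_4$-invariant ``$d_1$'' equal to $d_{2,i}$. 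Hence the quantities $\sum_{k=1}^4 d_{0,k}$ and $\sum_{k=1}^4\min(d_{1,k},\alpha)$ occurring in Theorem~\ref{thm3g} become, for this element, $D_1$ and $\sum_{k=1}^4\min(d_{2,k},\alpha)$.

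Reading Theorem~\ref{thm3g} for $(s;pH_1,\dots,pH_4)$ then produces $\alpha_0,\alpha_1\in\Z_{\ge0}$ with $\alpha_0\ge\max_i d_{1,i}$, $2\alpha_0+\alpha_1\ge\max_i(2d_{1,i}+d_{2,i})$, $D_1-3\alpha_0\ge0$, and $(D_1-3\alpha_0)+\bigl(\sum_{k=1}^4\min(d_{2,k},\alpha_1)-2\alpha_1\bigr)\ge0$. The inequality $3d_{1,i}\le D_1$ is then immediate from $D_1\ge3\alpha_0\ge3d_{1,i}$. For $d_{1,i}+2(d_{1,j}+d_{2,j})\le D_1+D_2$, I would first record, from $D_2=\sum_{k=1}^4\min(d_{2,k},\alpha_1)+\sum_{k=1}^4\max(0,d_{2,k}-\alpha_1)$ together with the last displayed inequality, that $D_1+D_2\ge3\alpha_0+2\alpha_1+\sum_{k=1}^4\max(0,d_{2,k}-\alpha_1)$. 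I would then fix $i,j$ and split on $d_{2,j}$. If $d_{2,j}\le\alpha_1$, then $d_{1,i}+2(d_{1,j}+d_{2,j})\le\alpha_0+(2d_{1,j}+d_{2,j})+d_{2,j}\le\alpha_0+(2\alpha_0+\alpha_1)+\alpha_1=3\alpha_0+2\alpha_1\le D_1+D_2$. If $d_{2,j}>\alpha_1$, then $\sum_{k=1}^4\max(0,d_{2,k}-\alpha_1)\ge d_{2,j}-\alpha_1$, so it suffices to check $d_{1,i}+2d_{1,j}+d_{2,j}\le3\alpha_0+\alpha_1$, which follows from $d_{1,i}\le\alpha_0$ and $2d_{1,j}+d_{2,j}\le2\alpha_0+\alpha_1$.

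Because Theorem~\ref{thm3g} is already in hand, I do not anticipate a genuine obstacle; the points needing care are getting the index shift $H_i\mapsto pH_i$ exactly right (so that the data Theorem~\ref{thm3g} controls really are $d_{1,i}$ and $d_{2,i}$) and the short two-case estimate that converts the $\min$-inequality of Theorem~\ref{thm3g} into the stated family of inequalities.
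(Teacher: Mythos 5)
Your proposal is correct and follows essentially the same route as the paper's proof: reduce to $(s;pH_1,\dots,pH_4)\in\calC_{X_4,1,2}$ via Proposition~\ref{prop2c}, apply Theorem~\ref{thm3g} with the index shift $d_{0,\cdot}\mapsto d_{1,i}$, $d_{1,\cdot}\mapsto d_{2,i}$, and extract the two families of inequalities. The only cosmetic difference is in the last step: the paper bounds $D_1+D_2-d_{2,j}$ directly in a single chain of inequalities, whereas you split on whether $d_{2,j}\le\alpha_1$; both manipulations are valid and prove the same thing.
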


\begin{proof}
Suppose that $(H_1,H_2,H_3,H_4)\in\calC_{X_4}$. By Proposition \ref{prop2c}, we have
$$
s_1=s_2=s_3=s_4\text{ and }(n;pH_1,pH_2,pH_3,pH_4)\in\calC_{X_4,1,\infty}=\calC_{X_4,1,2}
$$ 
for some $n\in\Z_{\ge1}$. Let $P(t)\in\M_n(\Z_p)[t]$ be a first integral which satisfies $\cok_{\Z_p/p^2\Z_p}(P(x_i))\cong pH_i/p^3H_i$ so that $d_{0,P(x_i)}=d_{1,i}$ and $d_{1,P(x_i)}=d_{2,i}$ for each $i$. By Theorem \ref{thm3g}, there exist $\alpha_0,\alpha_1\in\Z_{\ge0}$ such that
$$
D_1-3\alpha_0\ge0\text{ and }\left(D_1-3\alpha_0\right)+\left(\sum_{i=1}^4\min(d_{2,i},\alpha_1)-2\alpha_1\right)\ge0
$$
with $\alpha_0\ge\underset{1\le i\le 4}{\max}d_{1,i}$ and $2\alpha_0+\alpha_1\ge\underset{1\le i\le 4}{\max}(2d_{1,i}+d_{2,i})$. 
\begin{itemize}
    \item For every $1\le i\le 4$, we have $3d_{1,i} \le 3\alpha_0 \le D_1$.

    \item For every $1\le i,j\le 4$, we have
    \begin{equation*}
    \begin{split}
    D_1+D_2-d_{2,j} & \ge \left(D_1-3\alpha_0\right)+\left(\sum_{i_0=1}^4\min(d_{2,i_0},\alpha_1)-2\alpha_1\right) + (3\alpha_0+\alpha_1) \\
    & \ge \alpha_0 + (2\alpha_0+\alpha_1) \\
    & \ge d_{1,i}+(2d_{1,j}+d_{2,j})    
    \end{split}    
    \end{equation*}
    so we conclude that $d_{1,i}+2(d_{1,j}+d_{2,j}) \le D_1+D_2$. \qedhere
\end{itemize}
\end{proof}

\subsection{Zone Theory} \label{Sub33}

In this section, we prove the sufficient condition of Theorem \ref{thm1h} for $m=4$. We will find a generating set of $\calC_{X_4}$ and will prove that each element in a generating set satisfies the desired condition. To do this, we introduce a way to visualize an element of $\calB_m$.

\begin{definition} \label{def3j}
For given $k\in\Z_{\ge0}$, a $k$-\textit{presentation} of $(n;H_1,\cdots,H_m)\in\calB_m$ (denoted by $\Prs_k(n;H_1,\cdots,H_m)$) is an $m \times n$ matrix with entries in $\{0,1,\cdots,k-1,k^+\}$ whose $i$-th row contains $d_{r, i}$ numbers of $r$'s ($0 \le r \le k-1$) and $\sum_{r\in\extz_{\ge k}}d_{r, i}$ numbers of $k^+$'s. (It is unique up to ordering of the numbers in each row.) Each entry in a presentation of a block is called a \textit{type}.    
\end{definition}

\begin{figure}[ht]
$$
\begin{bmatrix*}[l]
0&0&1&1&1&2&2&2&2 \\ 
1&0&0&1&1&1&3^+&3^+&0 \\
0&0&3^+&3^+&0&0&0&3^+&3^+
\end{bmatrix*}
$$ 
\caption{A $3$-presentation of $\begin{pmatrix}
9;(\Z_p/p\Z_p)^3\times(\Z_p/p^2\Z_p)^4,
(\Z_p/p\Z_p)^4\times(\Z_p/p^3\Z_p)^2,\Z_p^4
\end{pmatrix}\in\calB_4$}
\label{fig1}
\end{figure}

\begin{remark}\label{rmk3k}
For given $l\in\extz_{\ge0}$ and $k \in \Z_{\ge0}$, assume that $\Prs_k(n;H_1,\cdots,H_m)=\Prs_k(n;H'_1,\cdots,H'_m)$. Then we have $H_i/p^kH_i\cong H'_i/p^kH'_i$ for each $i$ so $(n;H_1,\cdots,H_m)\in\calC_{X_m,l,k}$ if and only if $(n;H'_1,\cdots,H'_m)\in\calC_{X_m,l,k}$. Hence it is enough to consider a $k$-presentation of $(n;H_1,\cdots,H_m)$ to determine whether it is an element of $\calC_{X_m,l,k}$ or not.
\end{remark}

A $k$-presentation of a sum of two elements in $\calB_{m}$ is given by the concatenation of a $k$-presentation of each element. By Proposition \ref{prop2c}(\ref{item2c1}), the set of $k$-presentations of elements in $\calC_{X_m,l,k}$ is closed under concatenation.

\begin{example} \label{ex3l}
The equation $(3;(\Z_p/p\Z_p)^2,1)+(2;1,\Z_p/p\Z_p)=(5;(\Z_p/p\Z_p)^2,\Z_p/p\Z_p)$ is presented as
\begin{equation*}
\begin{bmatrix}
0 & 1 & 1 \\ 0 & 0 & 0
\end{bmatrix} + \begin{bmatrix}
0 & 0 \\ 0 & 1
\end{bmatrix} = \begin{bmatrix}
0 & 1 & 1 & 0 & 0 \\ 0 & 0 & 0 & 0 & 1
\end{bmatrix} = \begin{bmatrix}
0 & 0 & 0 & 1 & 1 \\ 0 & 0 & 0 & 0 & 1
\end{bmatrix}.
\end{equation*}
\end{example}

\phantom{ }

Consider an action of a permutation group $S_m$ on $\calB_m$ by $\sigma\cdot(n;H_1,\cdots,H_m):=(n;H_{\sigma(1)},\cdots,H_{\sigma(m)})$. Then $\Prs_k(\sigma\cdot(n;H_1,\cdots,H_m))$ is obtained by permuting the rows of $\Prs_k(n;H_1,\cdots,H_m)$ according to $\sigma$. For a subset $\calA\subset\calB_m$, denote $S_m\cdot\calA:=\{\sigma\cdot(n;H_1,\cdots,H_m) :\sigma\in S_m,(n;H_1,\cdots,H_m)\in\calA\} \subset \calB_m$.

Now we are ready to find a generating set of $\calC_{X_m,1,1}$. By the proof of Lemma \ref{lem3c}, for any element $(n;H_1,\cdots,H_m)\in\calC_{X_m,1,1}$, either one of the following holds:
\begin{enumerate}
    \item There exists a $1$-presentation $\calP$ of an element of $\calC_{X_m,1,1}$ such that one of the following holds.
    \begin{enumerate}
        \item$\calP+\sigma\cdot\begin{pmatrix}1^+&0&\cdots&0\end{pmatrix}^T=\Prs_1(n;H_1,\cdots,H_m)\text{ for some }\sigma\in S_m$.
        \item$\calP+\begin{pmatrix}0&\cdots&0\end{pmatrix}^T=\Prs_1(n;H_1,\cdots,H_m)$.
    \end{enumerate}
    \item There is no type $0$ on $\Prs_1(n;H_1,\cdots,H_m)$.
\end{enumerate}
We can repeat the above procedure until we find a $1$-presentation $\calP_H$ of an element of $\calC_{X_m,1,1}$ which has no type $0$ and there exist $r_1,\cdots,r_t\in\extz_{\ge0}$ and $\sigma_1,\cdots,\sigma_t\in S_m$ such that $$\calP_H+\sum_{i=1}^t\sigma_i\cdot(1;\Z_p/p^{r_i}\Z_p,1,\cdots,1)=\Prs_1(n;H_1,\cdots,H_m).$$
Since $\calP_H$ has no type $0$, it is also a $1$-presentation of an element of $\varphi_{1,0}^{-1}(\calC_{X_m,2,0})=\varphi_{1,0}^{-1}(\calB_m)$ by Proposition \ref{prop2c}(\ref{item2c4}). Conversely, by Example \ref{ex2d}, $\sigma\cdot(1;\Z_p/p^r\Z_p,1,\cdots,1)\in\calC_{X_m,1,1}$ for every $r\in\extz_{\ge0}$ and $\sigma\in S_m$. Hence we proved that $\calC_{X_m,1,1}=\langle\calA_{0,1}\cup\calA_{1,m}\rangle$ where
\begin{align*}
\calA_{r,d}&:=S_m\cdot\{(1;\Z_p/p^{r_1}\Z_p,\cdots,\Z_p/p^{r_{d}}\Z_p,\Z_p/p^r\Z_p,\cdots,\Z_p/p^r\Z_p)\in\calB_m:r_1,\cdots,r_{d}\in\extz_{\ge r}\}.
\end{align*}
for $r \in\extz_{\ge0}$ and $0 \le d\le m$. 
We can generalize this to the set $\calC_{X_m,l,1}$ for every $1 \le l \le m$. 
 
\begin{theorem}\label{thm3n}
For every $1 \le l \le m$, $\calC_{X_m,l,1}=\langle\calA_{0,l}\cup\calA_{1,m}\rangle$ and $(n;H_1,\cdots,H_m)\in\calC_{X_m,l,1}$ if and only if 
\begin{equation} \label{eq3a}
\sum_{i=1}^md_{0,i}-(m-l)\alpha_0\ge0
\end{equation}
for some (non-negative) integer $\alpha_0\ge\underset{1\le i\le m}{\max}d_{0,i}$.
\end{theorem}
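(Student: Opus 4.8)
The plan is to prove both assertions at once by establishing the chain of containments
$$\langle\calA_{0,l}\cup\calA_{1,m}\rangle\ \subseteq\ \calC_{X_m,l,1}\ \subseteq\ \mathcal D\ \subseteq\ \langle\calA_{0,l}\cup\calA_{1,m}\rangle ,$$
where $\mathcal D$ is the set of $(n;H_1,\dots,H_m)\in\calB_m$ for which (\ref{eq3a}) holds for some integer $\alpha_0\ge\underset{1\le i\le m}{\max}d_{0,i}$. Since the left side of (\ref{eq3a}) is nonincreasing in $\alpha_0$, membership in $\mathcal D$ is equivalent to $\sum_{i=1}^m d_{0,i}\ge(m-l)\underset{1\le i\le m}{\max}d_{0,i}$; in particular $\mathcal D=\calB_m$ when $l=m$, and then $\calA_{0,m}$ already exhausts $\calB_m$ by Example \ref{ex2d}, so from now on we assume $l<m$. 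The first containment follows from Example \ref{ex2d}, Remark \ref{rmk3k} and Proposition \ref{prop2c}(\ref{item2c1}): every element of $\calA_{0,l}$ is the $r=0$ instance of the classification in Example \ref{ex2d}, hence lies in $\calC_{X_m,l,\infty}\subseteq\calC_{X_m,l,1}$; every element of $\calA_{1,m}$ has the same $1$-presentation as $(1;(\Z_p/p\Z_p)^m)$, which is the $r=1$ (all $b_i=1$) instance of Example \ref{ex2d}, so it lies in $\calC_{X_m,l,1}$ by Remark \ref{rmk3k}; and $\calC_{X_m,l,1}$ is a monoid under concatenation. The second containment is exactly Lemma \ref{lem3f}. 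Hence once the third containment $\mathcal D\subseteq\langle\calA_{0,l}\cup\calA_{1,m}\rangle$ is proved, all four sets coincide, which yields both $\calC_{X_m,l,1}=\langle\calA_{0,l}\cup\calA_{1,m}\rangle$ and the claimed equivalence.

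For the third containment, fix $(n;H_1,\dots,H_m)\in\mathcal D$. Decompose each $H_i$ into its cyclic $\Z_p$-summands (elementary divisors) and adjoin $d_{0,i}$ copies of the trivial module, so that row $i$ acquires exactly $n$ ``slots'', of which $s(H_i)=n-d_{0,i}$ are nontrivial cyclic and $d_{0,i}$ are trivial. An expression $(n;H_1,\dots,H_m)=\sum_{k=1}^{n}G_k$ with each $G_k\in\calA_{0,l}\cup\calA_{1,m}$ is nothing but an arrangement of the slots of each row into the rows of an $m\times n$ array in which every column is \emph{good}, meaning either all $m$ of its entries are nontrivial or at most $l$ of them are: a column whose $m$ entries are all nontrivial cyclic is an element of $\calA_{1,m}$, and a column with at least $m-l$ trivial entries is an element of $\calA_{0,l}$ (using $\Z_p/p^{0}\Z_p=0$, so that the $l$ ``free'' slots of a generator of $\calA_{0,l}$ may be filled by arbitrary cyclic $\Z_p$-modules, trivial or not, while the remaining $m-l$ stay trivial; conversely the nontrivial slots of any good column can be matched with the cyclic summands of the corresponding $H_i$). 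In particular $\langle\calA_{0,l}\cup\calA_{1,m}\rangle$ is a union of $1$-presentation classes, so whether $(n;H_1,\dots,H_m)$ lies in it depends only on $n$ and on $d_{0,1},\dots,d_{0,m}$.

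Thus, writing $z_i:=d_{0,i}$ and assuming after a permutation that $z_1\ge\cdots\ge z_m$, it suffices to prove the following purely combinatorial statement: if $z_1\le n$ and $\sum_{i}z_i\ge(m-l)z_1$, then one can designate $z_i$ cells of row $i$ as ``trivial'' so that every column contains $0$ or at least $m-l$ trivial cells. I would prove this by even distribution. If $z_1=0$ then all $z_i=0$ and every column is free of trivial cells, so assume $z_1\ge1$: place all $z_1$ trivial cells of row $1$ in columns $1,\dots,z_1$, and place the trivial cells of rows $2,\dots,m$ also inside columns $1,\dots,z_1$ (possible since each $z_i\le z_1$), assigning each successive row's trivial cells to the currently least-loaded of those columns, so that the numbers $c_1,\dots,c_{z_1}$ of trivial cells contributed by rows $2,\dots,m$ differ pairwise by at most $1$. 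Then columns $z_1+1,\dots,n$ contain no trivial cell, while column $k\le z_1$ contains $1+c_k$ trivial cells; from $\sum_{i\ge 2}z_i=\sum_i z_i-z_1\ge(m-l-1)z_1$ we get $c_k\ge m-l-1$, and $c_k\le m-1$ automatically, whence $m-l\le 1+c_k\le m$ and every column is good. (This also respects the slot totals, as row $i$ then uses $z_1-z_i$ nontrivial slots among columns $1,\dots,z_1$ and $n-z_1$ among the rest, for a total of $s(H_i)$.) Reinterpreting the columns as generators gives the desired decomposition of $(n;H_1,\dots,H_m)$, proving $\mathcal D\subseteq\langle\calA_{0,l}\cup\calA_{1,m}\rangle$, and hence the theorem.

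The main obstacle is precisely this combinatorial claim: one must check that $\sum_{i}z_i\ge(m-l)z_1$, together with the automatic bounds $z_i\le z_1\le n$, is exactly what forces the balanced placement to land every nonempty column load inside the admissible window $[\,m-l,\,m\,]$ — that the lower bound coming from (\ref{eq3a}) and the upper bound $m$ coming from having only $m$ rows never conflict. This is a Gale--Ryser--type feasibility statement, for which the balanced greedy is the transparent certificate; everything else is bookkeeping with $k$-presentations and the cited results (Example \ref{ex2d}, Remark \ref{rmk3k}, Lemma \ref{lem3f}, Proposition \ref{prop2c}).
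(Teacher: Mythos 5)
Your proof is correct and follows the same three-step strategy as the paper's: $\langle\calA_{0,l}\cup\calA_{1,m}\rangle\subseteq\calC_{X_m,l,1}$ from Example \ref{ex2d} and the monoid structure, $\calC_{X_m,l,1}\subseteq\mathcal D$ from Lemma \ref{lem3f}, and $\mathcal D\subseteq\langle\calA_{0,l}\cup\calA_{1,m}\rangle$ by a balanced placement of the type-$0$ entries into the first $\alpha_0$ columns (the paper uses a cyclic round-robin assignment via residues $[a_r]_{\alpha_0}$, you use a least-loaded greedy, but both achieve the same within-one-of-average column loads, and the floor of the average is $\ge m-l$ for exactly the same reason).
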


\begin{proof}
Consider the sets $S_1:=\langle\calA_{0,l}\cup\calA_{1,m}\rangle$ and
\begin{equation*}
S_2:=\{(n;H_1,\cdots,H_m)\in\calB_m: \text{ the inequality } (\ref{eq3a}) \text{ holds for some integer } \alpha_0\ge\underset{1\le i\le m}{\max}d_{0,i} \}.
\end{equation*}
By Example \ref{ex2d}, we have $\calA_{0,l},\calA_{1,m}\subset\calC_{X_m,l,1}$ so $S_1 \subset\calC_{X_m,l,1}$. Lemma \ref{lem3f} implies that $\calC_{X_m,l,1}\subset S_2$. Now it is enough to show that $S_2\subset S_1$. For any element $(n;H_1,\cdots,H_m)\in S_2$, there exists a non-negative integer $\alpha_0\ge\underset{1\le i\le m}{\max}d_{0,i}$ which satisfies the inequality (\ref{eq3a}). If $d_{0,i}=0$ for each $i$, then each $H_i$ is of the form $\prod_{k=1}^{n} \Z_p/p^{r_k}\Z_p$ ($r_1, \cdots, r_n \in \extz_{\ge 1}$) so $(n; H_1, \cdots, H_m) \in \langle\calA_{1,m}\rangle$. Now assume that $d_{0,i} \ge 1$ for some $i$, so that $\alpha_0 \ge 1$. Also we may assume that $\alpha_0 = \underset{1\le i\le m}{\max}d_{0,i} \le n$.

For an integer $a$, let $[a]_{\alpha_0}$ be an integer such that $a \equiv [a]_{\alpha_0} \,\, (\bmod \; \alpha_0)$ and $1 \le [a]_{\alpha_0} \le \alpha_0$.
Choose a $1$-presentation $\Prs_1(n; H_1, \cdots, H_m)$ whose $r$-th row has type $0$ at columns $\left [a_r \right ]_{\alpha_0}$ for $\sum_{i=1}^{r-1}d_{0,i}+1 \le a_r \le \sum_{i=1}^{r}d_{0,i}$ for every $1 \le r \le m$. (See Figure \ref{fig2}.) By the inequality (\ref{eq3a}), each of the first $\alpha_0$ columns has at least $m-l$ type $0$'s so it is a $1$-presentation of an element of $\calA_{0,l}$. The remaining columns have no type $0$ so they are $1$-presentations of elements of $\calA_{1,m}$. Thus $\Prs_1(n; H_1, \cdots, H_m) = \Prs_1(n; H_1', \cdots, H_m')$ for some $(n; H_1', \cdots, H_m') \in S_1$ such that $H_i/pH_i \cong H_i'/pH_i'$ for each $i$. By the definition of the sets $\calA_{0,l}$ and $\calA_{1,m}$, if we replace a term $\Z_p/p^r\Z_p$ ($r \in \extz_{\ge 1}$) in $H_i'$ with $\Z_p/p^{r'}\Z_p$ for any $r' \in \extz_{\ge 1}$, then it is still an element of $S_1$. By iterating this process, we conclude that $(n; H_1, \cdots, H_m) \in S_1$.
\end{proof}

\begin{figure}[ht]
$$
\begin{bmatrix*}[l] 0&0&0&1^+&1^+&1^+&1^+&1^+\\0&0&1^+&0&1^+&1^+&1^+&1^+\\1^+&1^+&0&0&1^+&1^+&1^+&1^+\\ \undermat{\text{Zero}}{0\hphantom{+}&0\hphantom{+}&0\hphantom{+}&0\hphantom{+}}&\undermat{\text{One+}}{1^+&1^+&1^+&1^+} \end{bmatrix*}
$$
\caption{A choice of a $1$-presentation of $\begin{pmatrix}
8;(\Z_p/p\Z_p)^5,(\Z_p/p\Z_p)^2\times(\Z_p/p^2\Z_p)^3,\\ 
(\Z_p/p\Z_p)^2\times(\Z_p/p^3\Z_p)^4,\Z_p^4
\end{pmatrix}\in\calB_4$}
\label{fig2}
\end{figure}

Similarly, one can find a generating set of $\calC_{X_m,1,2}$ as follows. 

\begin{lemma} \label{lem3n1}
We have $\mathcal{D}_m = \mathcal{D}_{m,1} \cup \mathcal{D}_{m,2} \subset \calC_{X_m,1,2}$ for
\begin{equation*}
\begin{split}
\mathcal{D}_{m,1} & := S_m\cdot\begin{Bmatrix}
(d+1;\Z_p/p^{r_1}\Z_p,\cdots,\Z_p/p^{r_{d+2}}\Z_p,\Z_p/p\Z_p,\cdots,\Z_p/p\Z_p)\in\calB_m : \\ 0\le d\le m-2 \text{ and }r_1,\cdots,r_{d+2}\in\extz_{\ge2}  
\end{Bmatrix}, \\
 \mathcal{D}_{m,2} & :=  S_m\cdot\begin{Bmatrix}
(d+1;(\Z_p/p\Z_p)^2,\cdots,(\Z_p/p\Z_p)^2,\Z_p/p^{r_1}\Z_p,\cdots,\Z_p/p^{r_{m-d}}\Z_p)\in\calB_m : \\ 0\le d\le m \text{ and }r_1,\cdots,r_{m-d}\in\extz_{\ge2}\end{Bmatrix}.
\end{split}
\end{equation*}
\end{lemma}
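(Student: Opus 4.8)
The plan is to produce, for a set of orbit representatives of the two $S_m$-orbits making up $\mathcal{D}_m$, an explicit first integral $P(t)$ realizing a tuple of finitely generated $\Z_p$-modules with the \emph{same reduction modulo $p^2$} as the given generator; since $\calC_{X_m,1,2}$ is stable under permutation of the components and, by Remark \ref{rmk3k}, depends only on the $p^2$-truncations of the $H_i$, this is enough. Throughout I would use two elementary facts: $\cok(M)\cong\Z_p/(\det M)\Z_p$ whenever some $(\size M-1)\times(\size M-1)$ minor of $M$ is a unit, and the Schur identity $\cok\begin{pmatrix}A&b\\c^T&e\end{pmatrix}\cong\cok(e-c^TA^{-1}b)$ for invertible $A$; and I would repeatedly note that if every matrix entry involved in an elimination is divisible by $p$, the Schur corrections are $O(p^2)$ and hence invisible modulo $p^2$.

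For $\mathcal{D}_{m,1}$, fix $0\le d\le m-2$ and take the representative with components $\Z_p/p^{r_1}\Z_p,\dots,\Z_p/p^{r_{d+2}}\Z_p$ ($r_i\ge2$) followed by $m-d-2$ copies of $\Z_p/p\Z_p$. I would let $P(t)\in\M_{d+1}(\Z_p)[t]$ have $(j,j)$-entry $1$ and $(j,j+1)$-entry $t$ for $1\le j\le d$, $(d+1,j)$-entry $p\,c_j(t)$ for $1\le j\le d+1$, and all remaining entries $0$, where $c_1,\dots,c_{d+1}\in\Z_p[t]$ have degree $\le2$; then the coefficient of $t^l$ in $P(t)$ is divisible by $p^{l-1}$ for all $l\ge2$, so $P(t)$ is a first integral. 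The top-left $d\times d$ block is unipotent upper-bidiagonal, hence a unit, and a Schur-complement computation gives $\cok(P(x_i))\cong\Z_p/\bigl(p\,g(x_i)\bigr)\Z_p$ where $g(t)=c_{d+1}(t)-\sum_{j=1}^{d}(-1)^{d-j}t^{\,d-j+1}c_j(t)$. As the $c_j$ vary over degree-$\le2$ polynomials, $g$ ranges over all polynomials of degree $\le d+2$; choosing the $c_j$ so that $g(t)=\prod_{i=1}^{d+2}(t-x_i)$ yields $\cok(P(x_i))\cong\Z_p$ for $i\le d+2$ and $\cok(P(x_i))\cong\Z_p/p\Z_p$ for $i>d+2$. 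Since each $r_i\ge2$, the realized tuple $(d+1;\Z_p,\dots,\Z_p,\Z_p/p\Z_p,\dots,\Z_p/p\Z_p)$ has the same $p^2$-truncation as the generator, and Remark \ref{rmk3k} finishes this case.

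For $\mathcal{D}_{m,2}$, fix $0\le d\le m$ and take the representative with $d$ copies of $(\Z_p/p\Z_p)^2$ followed by $\Z_p/p^{r_1}\Z_p,\dots,\Z_p/p^{r_{m-d}}\Z_p$ ($r_j\ge2$). Put $e:=\max(0,m-d-2)$ and let $P(t)\in\M_{d+1}(\Z_p)[t]$ have top-left $d\times d$ block $\diag(t-x_1,\dots,t-x_d)$, all entries of the last row and last column off the corner equal to $p$, and $(d+1,d+1)$-entry $p^{\,e+1}\prod_{i=d+1}^{m}(t-x_i)$; the exponent $e+1$ is precisely what makes $P(t)$ a first integral despite this entry having degree $m-d$. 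For an index $k\le d$, eliminating the $d-1$ unit diagonal entries at positions $\ne k,d+1$ leaves $\cok(P(x_k))\cong\cok\!\left(p\begin{pmatrix}0&1\\1&\ast\end{pmatrix}+O(p^2)\right)\cong(\Z_p/p\Z_p)^2$, the inner $2\times2$ matrix being invertible because the corner entry is divisible by $p$. For an index $i>d$ the corner entry vanishes at $x_i$, and eliminating the top-left block leaves a $1\times1$ Schur complement divisible by $p^2$, so $\cok_{\Z_p/p^2\Z_p}(P(x_i))\cong\Z_p/p^2\Z_p$. As $r_j\ge2$ this matches the generator modulo $p^2$, and Remark \ref{rmk3k} concludes; note that $d=0$ degenerates to $P(t)=p^{m-1}\prod_{i=1}^m(t-x_i)$, realizing $(1;\Z_p,\dots,\Z_p)$.

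The only genuinely delicate point is the interaction between the defining constraint of a first integral (the coefficient of $t^l$ must be divisible by $p^{l-1}$) and the need to encode a vanishing condition of degree $d+2$ (resp.\ $m-d$) at the points $x_i$: one cannot simply place a high-degree scalar polynomial into one entry, so the polynomial data must be spread out — along the bidiagonal/last-row pattern for $\mathcal{D}_{m,1}$, and by absorbing the power $p^{e+1}$ into the corner for $\mathcal{D}_{m,2}$ — in a way that simultaneously respects the divisibility and yields the prescribed Smith normal forms. Writing out that the auxiliary Schur corrections are $O(p^2)$, and verifying the exact exponents, is routine bookkeeping that I would include but not belabor.
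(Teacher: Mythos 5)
Your proof is correct and takes essentially the same route as the paper's: exhibit explicit first integrals realizing representatives of the two $S_m$-orbit types, compute the cokernels via Schur complements (noting that the $O(p^2)$ corrections are invisible modulo $p^2$), and upgrade to arbitrary $r_i\ge 2$ via Remark~\ref{rmk3k}. One small simplification: in the $\mathcal{D}_{m,2}$ case the corner entry $p^{e+1}\prod_{i=d+1}^m(t-x_i)$ is unnecessary --- the paper simply takes the $(d+1,d+1)$-entry to be $0$, since for $i>d$ the Schur-complement correction $-\sum_{j\le d}p^2/(x_i-x_j)$ already makes that entry divisible by $p^2$ with no vanishing condition needed, and for $i\le d$ the entry $0$ is trivially divisible by $p$.
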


\begin{proof}
For $0 \le d \le m-2$ and
$f_1(t) = \sum_{k=0}^{d+2} a_kt^k := \prod_{j=1}^{d+2} (t-x_j)$, the first integral
$$
P_1(t) := \begin{pmatrix}1&&&&pa_{d+1}t+pa_{d+2}t^2 \\
  -t&1&&&pa_rt \\
  &\ddots&\ddots&&\vdots \\ &&-t&1&pa_2t \\
  &&&-t&pa_0+pa_1t
  \end{pmatrix} \in \M_{d+1}(\Z_p)[t].
$$
satisfies $\cok(P_1(t)) \cong \cok(pf_1(t))$ so we have $\mathcal{D}_{m,1} \subset \calC_{X_m,1,2}$. For $0 \le d \le m$, the first integral
$$
P_2(t) := \begin{pmatrix}t-x_1&&&p\\ &\ddots&&\vdots\\&&t-x_d&p\\p&\cdots&p\end{pmatrix} \in \M_{d+1}(\Z_p)[t].
$$
satisfies $\cok(P_2(x_i)) \cong (\Z_p/p\Z_p)^2$ for $1 \le i \le d$ and $\cok(P_2(x_i)) \cong \Z_p/p^{r_i}\Z_p$ ($r_i \in \extz_{\ge 2}$) for $d+1 \le i \le m$ so we have $\mathcal{D}_{m,2} \subset \calC_{X_m,1,2}$.
\end{proof}

\begin{theorem} \label{thm3o}
$\calC_{X_m,1,2}=\langle\calA_{0,1}\cup\calA_{1,2}\cup\calA_{2,m}\cup\mathcal{D}_m\rangle$. Moreover, $(n;H_1,\cdots,H_m)\in\calC_{X_m,1,2}$ if and only if 
\begin{gather}
\label{eq3b} \sum_{i=1}^md_{0,i}-(m-1)\alpha_0\ge 0, \\
\label{eq3c} \left(\sum_{i=1}^md_{0,i}-(m-1)\alpha_0\right)+\left(\sum_{i=1}^m\min(d_{1,i},\alpha_1)-(m-2)\alpha_1\right)\ge 0
\end{gather}
for some non-negative integers $\alpha_0$ and $\alpha_1$ such that $\alpha_0\ge\underset{1\le i\le m}{\max}d_{0,i}$ and $2 \alpha_0+\alpha_1\ge\underset{1\le i\le m}{\max}(2d_{0,i}+d_{1,i})$.
\end{theorem}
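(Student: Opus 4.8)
The plan is to establish the chain of inclusions $S_1\subseteq\calC_{X_m,1,2}\subseteq S_2\subseteq S_1$, where $S_1:=\langle\calA_{0,1}\cup\calA_{1,2}\cup\calA_{2,m}\cup\mathcal{D}_m\rangle$ and $S_2$ is the set of $(n;H_1,\cdots,H_m)\in\calB_m$ satisfying (\ref{eq3b}) and (\ref{eq3c}) for some non-negative integers $\alpha_0,\alpha_1$ with $\alpha_0\ge\max_i d_{0,i}$ and $2\alpha_0+\alpha_1\ge\max_i (2d_{0,i}+d_{1,i})$; this gives $S_1=\calC_{X_m,1,2}=S_2$ at once, which is both assertions of the theorem. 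Two of the inclusions are immediate: $\calC_{X_m,1,2}\subseteq S_2$ is Theorem \ref{thm3g}, and $S_1\subseteq\calC_{X_m,1,2}$ follows from Proposition \ref{prop2c}(\ref{item2c1}) once each generating family is seen to lie in $\calC_{X_m,1,2}$ --- for $\calA_{0,1}$ and $\calA_{1,2}$ by Example \ref{ex2d}, for $\calA_{2,m}$ by the constant first integral $P(t)=p^2\in\M_1(\Z_p)[t]$ (which has $\cok_{\Z_p/p^2\Z_p}(P(x_i))\cong\Z_p/p^2\Z_p\cong H_i/p^2H_i$ for every $i$, regardless of the exponents occurring in the $H_i$), and for $\mathcal{D}_m$ by Lemma \ref{lem3n1}. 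The content is therefore the inclusion $S_2\subseteq S_1$.

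Since membership in both $\calC_{X_m,1,2}$ and $S_1$ depends only on the $2$-presentation (Remark \ref{rmk3k}, using that each of the families $\calA_{r,d}$ and $\mathcal{D}_m$ already contains every finitely generated $\Z_p$-module with the prescribed $2$-presentation in each slot), it suffices to show that any $2$-presentation of an $(n;H_1,\cdots,H_m)\in S_2$ can, after permuting entries within rows, be cut into consecutive blocks of columns, each block being the $2$-presentation of a generator. I would do this by induction on $n$. If $d_{0,i}=0$ for all $i$, then (\ref{eq3b}) forces $\alpha_0=0$ and (\ref{eq3c}) collapses to $\sum_i d_{1,i}\ge(m-2)\max_i d_{1,i}$; the presentation then has entries only in $\{1,2^+\}$, and balancing the $2^+$-entries --- using $\min_i(n-d_{1,i})$ all-$2^+$ columns ($\calA_{2,m}$-blocks) and distributing the remaining $2^+$'s so that no other column carries more than two --- exhibits $(n;H_1,\cdots,H_m)\in\langle\calA_{1,2}\cup\calA_{2,m}\rangle$, the displayed inequality being exactly what makes the balancing possible (alternatively, this subcase is Theorem \ref{thm3n} for $l=2$ transported through the bijection $\varphi_{1,1}$ of Proposition \ref{prop2c}(\ref{item2c4})). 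Otherwise $\alpha_0\ge1$, and I would choose $(\alpha_0,\alpha_1)$ as small as possible; the idea, guided by a zone decomposition as in the proof of Theorem \ref{thm3n}, is to reserve the first $\alpha_0$ columns as a ``zero zone'', snake the $0$-entries of all rows cyclically through it so that each of these columns holds at least $m-1$ zeros --- which (\ref{eq3b}) guarantees --- and peel off column $1$ as a $\calA_{0,1}$-block, reducing $\alpha_0$ by $1$, the remaining columns being filled by the $1$- and $2^+$-entries and cut into $\calA_{1,2}$-, $\calA_{2,m}$-, and $\mathcal{D}_m$-blocks.

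The main obstacle is precisely the treatment of the columns outside the zero zone. Cutting them into $\calA_{1,2}$- and $\calA_{2,m}$-blocks alone fails exactly when some row has more $1$-entries than the room left after the zero zone, or when too few $2^+$-entries remain to raise a column to the $m-1$ zeros it would need, thereby forcing a column to contain a $0$ together with two or more $2^+$'s, or between three and $m-1$ entries equal to $2^+$ --- and these are exactly the configurations realized by the blocks $\mathcal{D}_{m,1}$ and $\mathcal{D}_{m,2}$. The crux of the proof will be to show that in every such situation the combined slack in (\ref{eq3b}) and (\ref{eq3c}) is enough to carve off a suitable $\mathcal{D}_m$-block --- dropping $\alpha_0$ by its width and $\alpha_1$ by its number of $1$-columns --- and that the residual tuple then still lies in $S_2$, so the induction hypothesis applies. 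Verifying, case by case, that after each such removal the two inequalities together with the constraints $\alpha_0\ge\max_i d_{0,i}$ and $2\alpha_0+\alpha_1\ge\max_i (2d_{0,i}+d_{1,i})$ are all preserved is where essentially all the work of the proof lies.
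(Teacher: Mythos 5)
Your overall architecture --- the chain $S_1\subseteq\calC_{X_m,1,2}\subseteq S_2\subseteq S_1$ --- coincides with the paper's, and the first two inclusions are handled correctly; your direct witness $P(t)=p^2\in\M_1(\Z_p)[t]$ for $\calA_{2,m}$ is a fine alternative to citing Example~\ref{ex2d} via Remark~\ref{rmk3k}. The problem is that the remaining inclusion $S_2\subseteq S_1$ is not actually proved: you set up an induction on $n$, propose peeling off a column or a $\mathcal{D}_m$-block, and then explicitly concede that showing the residual tuple remains in $S_2$ --- that the two inequalities and the constraints $\alpha_0\ge\max_i d_{0,i}$, $2\alpha_0+\alpha_1\ge\max_i(2d_{0,i}+d_{1,i})$ survive each peeling --- ``is where essentially all the work of the proof lies.'' As written this is a plan, not a proof, and the feasibility of the plan (in particular, whether one can always choose a block whose removal respects all four conditions simultaneously) is exactly the thing in doubt.

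It is worth pointing out that the paper's proof avoids this induction entirely, which is why it has no such verification to perform. It lays out the whole $2$-presentation once: $\alpha_0$ Zero-Zone columns, $\alpha_1$ One-Zone columns, a Two$+$ Zone, with type-$0$'s and type-$1$'s placed cyclically, overflow $1$'s spilled into the Zero Zone, and remaining slots filled with $2^+$'s. It then proves four global properties of this single layout: every Zero-Zone column holds at least $m-1$ zeros; there are exactly $\sum_i d_{0,i}-(m-1)\alpha_0$ all-zero columns; at most that many replacements raise every One-Zone column to $m-2$ ones (this is where \eqref{eq3c} enters); and in every row at least as many $1$'s sit in the Zero Zone as in the Two$+$ Zone (a direct consequence of $2\alpha_0+\alpha_1\ge 2d_{0,i}+d_{1,i}$). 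Given these, the decomposition into $\calA$- and $\mathcal{D}_m$-blocks is a one-step assignment --- each deficient One-Zone column is padded with all-zero columns into a $\mathcal{D}_{m,1}$-block, and each Two$+$-Zone column carrying $1$'s in rows $i_1,\cdots,i_d$ is grouped with the Zero-Zone columns $e_{i_1},\cdots,e_{i_d}$ (the ones with a single $1$ in that row) into a $\mathcal{D}_{m,2}$-block --- and there is no residual tuple whose membership in $S_2$ needs re-checking. If you insist on the inductive formulation, the invariant you are missing is the analogue of the fourth property, some guarantee that removing a block preserves the per-row balance between Zero-Zone and Two$+$-Zone type-$1$'s; without it your induction step is asserted rather than verified, and it is precisely the part that does not follow from what you have written.
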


\begin{proof}
Consider the sets $S_1:=\langle\calA_{0,1}\cup\calA_{1,2}\cup\calA_{2,m}\cup\mathcal{D}_m\rangle$ and
\begin{equation*}
S_2:=\begin{Bmatrix}(n;H_1,\cdots,H_m)\in\calB_m:
\text{ the inequalities } (\ref{eq3b}) \text{ and } (\ref{eq3c}) \text{ hold for some } \\
\alpha_0,\alpha_1 \in \Z_{\ge 0} \text{ such that } \alpha_0\ge\underset{1\le i\le m}{\max}d_{0,i}\text{ and }2 \alpha_0+\alpha_1\ge\underset{1\le i\le m}{\max}(2d_{0,i}+d_{1,i})\end{Bmatrix}.
\end{equation*}
By Example \ref{ex2d} and Lemma \ref{lem3n1}, we have $\calA_{0,1},\calA_{1,2},\calA_{2,m},\mathcal{D}_m\subset\calC_{X_m,1,2}$ so $S_1 \subset\calC_{X_m,1,2}$. (Recall that $\calC_{X_m,1,\infty} \subset \calC_{X_m,1,2}$ by Proposition \ref{prop2c}(\ref{item2c21}).) Theorem \ref{thm3g} implies that $\calC_{X_m,1,2}\subset S_2$. Now it is enough to show that $S_2\subset S_1$. For any element $(n;H_1,\cdots,H_m)\in S_2$, there exist $\alpha_0,\alpha_1 \in \Z_{\ge 0}$ such that $\alpha_0\ge\underset{1\le i\le m}{\max}d_{0,i}$, $2 \alpha_0+\alpha_1\ge\underset{1\le i\le m}{\max}(2d_{0,i}+d_{1,i})$ and the inequalities (\ref{eq3b}) and (\ref{eq3c}) hold.

Allocate $\alpha_0$ columns to \textit{Zero Zone} and $\alpha_1$ columns to \textit{One Zone}. Arrange type $0$'s on Zero Zone and type $1$'s on One Zone as in the proof of Theorem \ref{thm3n}. Then all type $0$'s are placed because $\alpha_0\ge d_{0,i}$ for each $i$, while type $1$'s may not. Fill the remaining type $1$'s to Zero Zone. If there are still remaining type $1$'s, then allocate new columns for remaining type $1$'s to \textit{Two+ Zone} and then fill type $2^+$'s on empty entries. Finally, allocate new columns to \textit{Two+ Zone} for remaining type $2^+$'s if necessary. Then we have the followings.
\begin{enumerate}[label=(\alph*)]
    \item\label{item3o1}Each column on Zero Zone contains at least $m-1$ type $0$'s.
    
    \item\label{item3o2}There are exactly $\sum_{i=1}^{m} d_{0,i}-(m-1)\alpha_0$ numbers of $\begin{pmatrix}0&\cdots&0\end{pmatrix}^T$ columns.
    
    \item\label{item3o3}By replacing at most $\sum_{i=1}^{m} d_{0,i}-(m-1)\alpha_0$ entries, we can make each column on One Zone contains at least $m-2$ type $1$'s.
    
    \item\label{item3o4}Type $1$ appears more or equal on Zero Zone than on Two+ Zone for each row.
\end{enumerate}
The properties \ref{item3o1} and \ref{item3o2} are easy to prove, and \ref{item3o3} follows from the inequality (\ref{eq3c}). For each $i$, the inequality $2\alpha_0+\alpha_1 \ge 2d_{0,i}+d_{1,i}$ is equivalent to $\alpha_0 - d_{0,i} \ge (d_{1,i}-\alpha_1)-(\alpha_0 - d_{0,i})$, which implies \ref{item3o4}.

If there are some empty entries, then swap them to the rightmost non-empty entry on each row and delete all empty columns. Then we obtain a $2$-presentation of $(n; H_1, \cdots, H_m)$, which still satisfies the properties \ref{item3o1}, \ref{item3o2}, \ref{item3o3}, and \ref{item3o4}. Figure \ref{fig3} illustrates the process to place the types.

\begin{figure}[ht]
\begin{gather*}
\begin{bmatrix} 0&0&0&0&*&*\\0&0&*&*&1&1 \\0&*&0&0&1&*\\ 
\undermat{\text{Zero}}{0&0&0&0}&\undermat{\text{One}}{*&*} \end{bmatrix}
\rightarrow \begin{bmatrix} 0&0&0&0&*&*\\0&0&1&1&1&1 \\0&*&0&0&1&*\\ 
\undermat{\text{Zero}}{0&0&0&0}&\undermat{\text{One}}{*&*} \end{bmatrix} \\ \\
\rightarrow \begin{bmatrix*}[l] 0&0&0&0&2^+&2^+&2^+&2^+\\0&0&1&1&1&1&1&1\\0&2^+&0&0&1&2^+&2^+&2^+\\\undermat{\text{Zero}}{0\hphantom{+}&0\hphantom{+}&0\hphantom{+}&0\hphantom{+}}&\undermat{\text{One}}{2^+&2^+}&\undermat{\text{Two+}}{2^+&2^+} \end{bmatrix*}
=\begin{bmatrix*}[l]0&2^+\\0&1\\0&2^+\\0&2^+\end{bmatrix*}+2\begin{bmatrix*}[l]0&2^+\\1&1\\0&2^+\\0&2^+\end{bmatrix*}+\begin{bmatrix*}[l]0\\0\\2^+\\0\end{bmatrix*}+\begin{bmatrix*}[l]2^+\\1\\1\\2^+\end{bmatrix*}
\end{gather*}
\caption{A process for $\begin{pmatrix}
8;(\Z_p/p^2\Z_p)^2\times(\Z_p/p^3\Z_p)^2,(\Z_p/p\Z_p)^6,\\ 
\Z_p/p\Z_p\times\Z_p/p^3\Z_p\times(\Z_p/p^5\Z_p)^3,\Z_p^4
\end{pmatrix}\in\calB_4$ ($\alpha_0=4$, $\alpha_1=2$)}
\label{fig3}
\end{figure}

For each column on One Zone which has $m-2-d$ type $1$ for $d>0$, concatenate $d\begin{pmatrix}0&\cdots&0\end{pmatrix}^T$; this is possible due to \ref{item3o2} and \ref{item3o3}. For each column on Two+ Zone which has type $1$ on rows $i_1,\cdots,i_d$, concatenate $e_{i_1}, \cdots, e_{i_d}$ ($e_i$ is a column on Zero Zone whose $i$-th row is $1$); this is possible due to \ref{item3o4}. These are elements of $\mathcal{D}_m$ and the other columns are elements of $\calA_{0,1}\cup\calA_{1,2}\cup\calA_{2,m}$, so we have $(n;H_1,\cdots,H_m)\in S_1$.
\end{proof}

Now we can complete the proof of Theorem \ref{thm1h} for $m=4$.

\begin{proof}[Proof of Theorem \ref{thm1h} for $m=4$]
Suppose that $(H_1, H_2, H_3, H_4)\in\calM_{\Z_p}^4$ satisfies the conditions 
$$
s := s_1 = \cdots = s_4, \; 3d_{1, i} \le D_1\text{ and }d_{1, i} + 2(d_{1,j}+d_{2,j}) \le D_1+D_2 \text{ for every } 1 \le i,j \le 4.
$$
We claim that $(s;H_1,H_2,H_3,H_4)\in\calC_{X_4,0,3}$ so that $(H_1,H_2,H_3,H_4)\in\calC_{X_4}$. By Proposition \ref{prop2c}(\ref{item2c4}), it suffices to prove that $(s;pH_1,pH_2,pH_3,pH_4)\in\calC_{X_4,1,2}$. 
Set $\alpha_0=\underset{1\le i\le 4}{\max}d_{1,i}$ and $\alpha_1=\underset{1\le i\le 4}{\max}(2d_{1,i}+d_{2,i})-2\underset{1\le i\le 4}{\max}d_{1,i}$. Then $D_1-3\alpha_0=D_1-3\underset{1\le i\le 4}{\max}d_{1,i}\ge0$ by the assumption. To apply Theorem \ref{thm3o}, we need to prove
\begin{gather*}
(D_1-3\alpha_0)+\left(\sum_{i=1}^4\min(d_{2,i},\alpha_1)-2\alpha_1\right)\ge0.
\end{gather*}
The case $F(\alpha_1):=\sum_{i=1}^4\min(d_{2,i},\alpha_1)-2\alpha_1\ge0$ is clear, so we may assume that $F(\alpha_1)<0$ and $d_{2,i} \ge \alpha_1$ for at most one $i$. 
For $i_0$ such that $2d_{1,i_0}+d_{2,i_0}=\underset{1\le i\le 4}{\max}(2d_{1,i}+d_{2,i})$, we have $\alpha_1=2d_{1,i_0}+d_{2,i_0}-2\underset{1\le i\le 4}{\max}d_{1,i}\le d_{2,i_0}$ so $d_{2,i_0}=\underset{1\le i\le 4}{\max}d_{2,i}$. 
Now we have $F(\alpha_1)=D_2-d_{2,i_0}-\alpha_1$ so
\begin{align*}
(D_1-3\alpha_0)+F(\alpha_1)&=(D_1-3\alpha_0)+(D_2-\alpha_1-d_{2,i_0})\\&=D_1+D_2-\underset{1\le i\le 4}{\max}d_{1,i}-2d_{1,i_0}-2d_{2,i_0}\ge0
\end{align*}
by the assumption. We conclude that $(s;pH_1,pH_2,pH_3,pH_4)\in\calC_{X_4,1,2}$ by Theorem \ref{thm3o}.
\end{proof}

\section{Joint distribution of multiple cokernels} \label{Sec4}

\subsection{Convergence of the joint distribution} \label{Sub41}

In this section, we study the limit
$$
\lim_{n \rightarrow \infty} \PP \begin{pmatrix}
\cok(A_n + y_i I_n) \cong H_i \\
\text{for } 1 \le i \le m
\end{pmatrix}
$$
where $A_n \in \M_n(\Z_p)$ is a Haar random matrix for each $n \ge 1$, $y_1, \cdots, y_m \in \Z_p$ are distinct and $H_1, \cdots, H_m \in \GG_p$. Although we do not know the value of the above limit, we can prove the convergence of the limit. The proof is based on the probabilistic argument in \cite[Section 2.2]{Lee23a}. 

\begin{theorem} \label{thm5a}
Let $A_n \in \M_n(\Z_p)$ be a Haar random matrix for each $n \ge 1$, $y_1, \cdots, y_m$ be distinct elements of $\Z_p$ and $H_1, \cdots, H_m \in \GG_p$. Then the limit 
$$
\lim_{n \rightarrow \infty} \PP \begin{pmatrix}
\cok(A_n + y_i I_n) \cong H_i \\
\text{for } 1 \le i \le m
\end{pmatrix}
$$
converges.
\end{theorem}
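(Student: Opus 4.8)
The plan is to reduce the problem to uniform random matrices over the finite quotient ring $\Z_p/p^k\Z_p$, separate the $m$ conditions according to the residues of the $y_i$ modulo $p$ via a primary decomposition, and then invoke the probabilistic recursion of \cite[Section 2.2]{Lee23a}. First I would fix $k\in\Z_{\ge1}$ large enough that $p^{k-1}H_i=0$ for all $i$ and that $y_1,\dots,y_m$ are pairwise distinct modulo $p^k$. For any $M\in\M_n(\Z_p)$, the description $\cok_{\Z_p/p^k\Z_p}(M)\cong\prod_{r=1}^{k-1}(\Z_p/p^r\Z_p)^{d_{r,M}}\times(\Z_p/p^k\Z_p)^{\sum_{r\in\extz_{\ge k}}d_{r,M}}$ together with $p^{k-1}H_i=0$ shows that $\cok(M)\cong H_i$ if and only if $\cok_{\Z_p/p^k\Z_p}(M)\cong H_i$ (if the latter holds then the exponent bound forces $\sum_{r\in\extz_{\ge k}}d_{r,M}=0$, hence $\cok(M)\cong\cok_{\Z_p/p^k\Z_p}(M)$). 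Since $\cok_{\Z_p/p^k\Z_p}(A_n+y_iI_n)$ depends only on $B:=(A_n\bmod p^k)\in\M_n(\Z_p/p^k\Z_p)$, which is uniformly distributed, the probability in question equals
$$
a_n:=\PP_{B}\begin{pmatrix}\cok_{\Z_p/p^k\Z_p}(B+\bar y_iI_n)\cong H_i\\\text{for }1\le i\le m\end{pmatrix},
$$
with $B$ uniform over $\M_n(\Z_p/p^k\Z_p)$ and $\bar y_i:=(y_i\bmod p^k)$.

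Next I would use the primary decomposition of $B$ at the residues $e_i:=(y_i\bmod p)\in\F_p$. Lifting, via Hensel's lemma over the complete local ring $\Z_p/p^k\Z_p$, the factorization of the characteristic polynomial of $\overline B$ into $\prod_{c\in S}(t-c)^{m_c}$ times a factor with no root in $S:=\{-e_1,\dots,-e_m\}$, one obtains a $B$-stable decomposition $(\Z_p/p^k\Z_p)^n=V_{\mathrm{reg}}\oplus\bigoplus_{c\in S}V_c$ into free $\Z_p/p^k\Z_p$-modules such that $(B-cI)|_{V_c}$ is nilpotent modulo $p$ for each $c\in S$, while $B-cI$ acts invertibly on $V_{\mathrm{reg}}$ and on $V_{c'}$ for $c'\in S\setminus\{c\}$. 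Hence $B+\bar y_iI_n$ is invertible on every summand except $V_{-e_i}$, so $\cok_{\Z_p/p^k\Z_p}(B+\bar y_iI_n)\cong\cok_{\Z_p/p^k\Z_p}\big((B+\bar y_iI)|_{V_{-e_i}}\big)$. Therefore $a_n$ is determined by $B$ only through the $\GL$-conjugacy classes of the restrictions $B|_{V_c}$, $c\in S$ (conjugating $B$ by an element of $\GL_n(\Z_p/p^k\Z_p)$ changes neither the uniform distribution of $B$ nor any $\cok_{\Z_p/p^k\Z_p}(B+\bar y_iI_n)$).

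It then suffices to show that, as $n\to\infty$ with $B$ uniform over $\M_n(\Z_p/p^k\Z_p)$, the joint distribution of the $\GL$-conjugacy classes of $(B|_{V_c})_{c\in S}$ converges; the desired tuple of cokernels is a fixed function of this data, so $a_n$ will converge as well. Replacing $B$ by $B-cI$ (again uniformly distributed) identifies the component at $c$ with the conjugacy class of the topologically nilpotent part of a uniform random matrix over $\Z_p/p^k\Z_p$, so the heart of the matter is the stabilization of that conjugacy class, together with its asymptotic independence over the finitely many $c\in S$. This is precisely what the probabilistic recursion of \cite[Section 2.2]{Lee23a} provides: one passes from size $n$ to size $n+1$ by adjoining a uniformly random last row and column, uses conjugation-invariance to normalize $B$, and shows that the conjugacy class of the nilpotent part is altered only on a configuration of probability small enough (summable in $n$) for $(a_n)$ to be a Cauchy sequence.

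The step I expect to be the main obstacle is the last one, and specifically the fact that the bordering recursion must be carried through \emph{simultaneously} for the $m$ matrices $B+\bar y_iI_n$, which all depend on the same random entries: an elementary row or column operation that simplifies one of them typically ruins the shape of the others. The two structural inputs above are what keep this under control — conjugation by $\GL_n(\Z_p/p^k\Z_p)$, essentially the only family of reductions compatible with all $m$ matrices at once, and the primary decomposition, which confines the analysis for index $i$ to the generalized eigenspace $V_{-e_i}$ (of a priori unbounded but typically small dimension).
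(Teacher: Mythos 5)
Your opening reductions are sound (passing to $\Z_p/p^k\Z_p$, and the Hensel/CRT primary decomposition into a $B$--stable direct sum of generalized eigenspaces $V_c$), and they are genuinely different from what the paper does. But the proposal stops exactly where the difficulty begins, and the citation you lean on does not cover it. What you would need is convergence, as $n\to\infty$, of the joint law of the $\GL$--conjugacy classes of $(B|_{V_c})_{c\in S}$ over $\Z_p/p^k\Z_p$ --- or, at the very least, convergence of the joint law of the cokernels $\cok\bigl((B+\bar y_iI)|_{V_{-e_i}}\bigr)$ when several of the $y_i$ share the same residue $-c$ modulo $p$. That shared-residue case is precisely the content of Theorem \ref{thm5a} (and of the whole paper: $y_i=px_i$ all reduce to $0$ mod $p$), so the primary decomposition does not split it into easier pieces; it merely restricts attention to one $V_c$ where $B-cI$ is topologically nilpotent, and you cannot iterate by ``subtracting the eigenvalue and dividing by $p$'' because $B-cI$ is not divisible by $p$. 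Meanwhile, \cite[Section 2.2]{Lee23a} establishes convergence of the joint law of $\cok(P_i(A_n))$ for polynomials whose reductions mod $p$ are \emph{distinct} and irreducible --- i.e.\ the easy, asymptotically independent case after your decomposition --- and it says nothing about conjugacy classes of nilpotent parts, nor about several $y_i$ landing in the same $V_c$. So ``this is precisely what the probabilistic recursion of [Lee23a] provides'' is not accurate: it provides the part you didn't need.

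The paper's actual mechanism for the shared-residue recursion is a specific matrix-polynomial normal form
$$
M_{A,\,[B_1,\dots,B_k]}(y) \;=\; A \;+\; y\begin{pmatrix}0&\\&I_{n-1}\end{pmatrix} \;+\;\sum_{j=1}^{k} y^j\begin{pmatrix}B_j & O\end{pmatrix} \;+\; y^{k+1}\begin{pmatrix}1&\\&O_{n-1}\end{pmatrix},
$$
with associated probabilities $P_{n,k}$, chosen exactly so that a single column normalization (Lemma \ref{lem5b}, cost $1-c_{n-2,1}$) followed by elementary row/column operations turns $P_{n,k}$ into $P_{n-1,k+1}$ \emph{exactly}, with all randomness and independence preserved. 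Note the direction: the recursion \emph{removes} a row and column at the cost of raising the degree in $y$ by one, not the ``adjoin a row and column'' step you describe; your version leaves it unclear why the error is summable. The stabilization $P_{n-M,M}=P_{n-M,M+1}$ for $p^MH_i=0$ then makes $(P_{n,0})_n$ Cauchy. Nothing in your proposal supplies a substitute for this normal form, and your own closing paragraph correctly flags this step as ``the main obstacle'' --- which is exactly the gap: identifying the obstacle is not the same as overcoming it.
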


The following lemma will be frequently used in the proof of Theorem \ref{thm5a}.

\begin{lemma} \label{lem5b}
(\cite[Lemma 2.3]{Lee23a}) For any integers $n \ge r > 0$ and a Haar random $C \in \M_{n \times r}(\Z_p)$, we have
$$
\PP \left ( \text{there exists } Y \in \GL_{n}(\Z_p) \text{ such that } YC = \begin{pmatrix}
I_r \\
O
\end{pmatrix} \right ) = c_{n,r} := \prod_{j=0}^{r-1} \left ( 1 - \frac{1}{p^{n-j}} \right ).
$$
\end{lemma}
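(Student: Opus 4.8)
The plan is to translate the stated event into a condition on the reduction of $C$ modulo $p$ and then reduce to a finite count over $\F_p$. First I would prove that, for a fixed $C\in\M_{n\times r}(\Z_p)$, the existence of $Y\in\GL_n(\Z_p)$ with $YC=\begin{pmatrix}I_r\\O\end{pmatrix}$ is equivalent to $\rank_{\F_p}(\overline C)=r$, where $\overline C\in\M_{n\times r}(\F_p)$ denotes the reduction of $C$ modulo $p$. One direction is immediate: reducing $YC=\begin{pmatrix}I_r\\O\end{pmatrix}$ modulo $p$ gives $\overline Y\,\overline C=\begin{pmatrix}I_r\\O\end{pmatrix}$ with $\overline Y\in\GL_n(\F_p)$, which forces $\rank_{\F_p}(\overline C)=r$. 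For the converse I would invoke the Smith normal form over the principal ideal domain $\Z_p$: there exist $U\in\GL_n(\Z_p)$ and $V\in\GL_r(\Z_p)$ with $C=U\,\diag(p^{a_1},\dots,p^{a_r})\,V$ (in the $n\times r$ sense, $a_i\in\Z_{\ge0}\cup\{\infty\}$), and then $\rank_{\F_p}(\overline C)=\#\{i:a_i=0\}$; if this equals $r$, all $a_i=0$, so $C=U\begin{pmatrix}I_r\\O\end{pmatrix}V$ and one checks directly that $Y:=\begin{pmatrix}V^{-1}&O\\O&I_{n-r}\end{pmatrix}U^{-1}$ works. Equivalently, the event says that the columns of $C$ extend to a $\Z_p$-basis of $\Z_p^n$, i.e.\ $\cok(C)\cong\Z_p^{n-r}$.

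Next I would use that a Haar random $C\in\M_{n\times r}(\Z_p)$ has reduction $\overline C$ uniformly distributed on $\M_{n\times r}(\F_p)$, so the probability in the lemma equals the proportion of $n\times r$ matrices over $\F_p$ of full column rank. Counting these column by column, the first column may be any of the $p^n-1$ nonzero vectors, and after $k-1$ linearly independent columns have been chosen the $k$-th may be any of the $p^n-p^{k-1}$ vectors outside their span; hence there are $\prod_{k=1}^r(p^n-p^{k-1})$ such matrices among the $p^{nr}$ total, and dividing gives
\[
\PP\!\left(\exists\,Y\in\GL_n(\Z_p):\ YC=\begin{pmatrix}I_r\\O\end{pmatrix}\right)
=\prod_{k=1}^{r}\frac{p^n-p^{k-1}}{p^n}
=\prod_{j=0}^{r-1}\left(1-\frac{1}{p^{n-j}}\right)=c_{n,r}.
\]

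I do not expect a genuine obstacle here; the only step needing a little care is the equivalence with $\rank_{\F_p}(\overline C)=r$, i.e.\ correctly passing between the $\Z_p$-statement ``$C$ is completable to an element of $\GL_n(\Z_p)$'' and the mod-$p$ rank condition via Smith normal form over $\Z_p$. As an alternative to the explicit count, one could argue by induction on $r$: conditioning on the first $r-1$ columns already being completable (probability $c_{n,r-1}$ by induction), a $\GL_n(\Z_p)$-change of basis depending only on those columns --- and hence preserving the Haar law of the independent $r$-th column --- reduces the remaining requirement to a Haar random vector in $\Z_p^{n-r+1}$ being unimodular, an event of probability $1-p^{-(n-r+1)}$, whence $c_{n,r}=(1-p^{-(n-r+1)})\,c_{n,r-1}$.
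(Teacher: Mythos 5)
Your proof is correct, and it is a complete, self-contained argument. Note that the paper itself does not prove this lemma -- it is imported by citation from \cite[Lemma 2.3]{Lee23a} -- so there is no in-text proof to compare against; what you have written is the standard argument one would expect to find in that reference: translate the completability of $C$ to an element of $\GL_n(\Z_p)$ into the condition $\rank_{\F_p}(\overline C)=r$ via Smith normal form over $\Z_p$, observe that Haar measure on $\M_{n\times r}(\Z_p)$ pushes forward to the uniform measure on $\M_{n\times r}(\F_p)$, and count full-column-rank matrices over $\F_p$ column by column. Both the main counting argument and the alternative induction you sketch at the end are sound; in the inductive step, the key point you correctly identify is that the basis change used to normalize the first $r-1$ columns depends only on those columns, hence leaves the independent Haar law of the $r$-th column invariant, reducing the condition to unimodularity of a Haar random vector in $\Z_p^{n-r+1}$.
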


\begin{proof}[Proof of Theorem \ref{thm5a}]
For any $n \in \Z_{\ge 1}$ and $k \in \Z_{\ge 0}$, denote
$$
P_{n, k} := \PP \begin{pmatrix}
\cok(M_{A, \, [B_1, \cdots, B_k]}(y_i)) \cong H_i \\ 
\text{ for } 1 \le i \le m
\end{pmatrix} 
$$
where $A \in \M_n(\Z_p)$, $B_1, \cdots, B_k \in \M_{n \times 1}(\Z_p)$ are random and independent matrices and
$$
M_{A, \, [B_1, \cdots, B_k]}(y) := A + y \begin{pmatrix}
0 &  \\ 
 & I_{n-1}
\end{pmatrix} + \sum_{j=1}^{k} y^j \begin{pmatrix}
B_j & O_{n \times (n-1)} 
\end{pmatrix}
+ y^{k+1} \begin{pmatrix}
1 &  \\ 
 & O_{n-1}
\end{pmatrix} \in \M_n(\Z_p).
$$
To prove the convergence of the limit $\displaystyle \lim_{n \rightarrow \infty} P_{n, 0} = \lim_{n \rightarrow \infty} \PP \begin{pmatrix}
\cok(A_n + y_i I_n) \cong H_i \\
\text{for } 1 \le i \le m
\end{pmatrix}$, we will show that $P_{n, k}$ and $P_{n-1, k+1}$ are very close. For $n > 1$, $A = \begin{pmatrix}
A_1 & A_2 \\ 
A_3 & A_4
\end{pmatrix} \in \M_{1+(n-1)}(\Z_p)$, $B_1, \cdots, B_k \in \M_{n \times 1}(\Z_p)$ and 
  $$
  U = \begin{pmatrix}
    1 & O \\
    O & U_1
  \end{pmatrix} \in \GL_{n}(\Z_p) \;\; (U_1 \in \GL_{n-1}(\Z_p)),
  $$
we have
\begin{equation*}
U M_{A, \, [B_1, \cdots, B_k]}(y) U^{-1} 
= M_{A', \, [B_1', \cdots, B_k']}(y)
\end{equation*}
for
\begin{equation*}
A' = \begin{pmatrix}
    A_1 & A_2U_1^{-1} \\
    U_1A_3 & U_1A_4U_1^{-1}
  \end{pmatrix}, \, B_j'=UB_j 
\end{equation*}
by a direct computation. For a random $A_2$, the probability that there exists $U_1 \in \GL_{n-1}(\Z_p)$ such that $A_2U_1^{-1} = \begin{pmatrix}
-1 & O_{(n-2) \times 1}
\end{pmatrix}$ is $c_{n-2, 1}$ by Lemma \ref{lem5b}. Moreover, for any given $A_2$ and $U_1$, the matrices $U_1A_3$, $U_1A_4U_1^{-1}$ and $UB_j$ ($1 \le j \le k$) are random and independent if and only if $A_3$, $A_4$ and $B_j$ ($1 \le j \le k$) are random and independent. These imply that
\begin{equation} \label{eq5a}
\left | P_{n, k} - \widetilde{P}_{n, k} \right | \le 1-c_{n-2, 1}
\end{equation}
for
$$
\widetilde{\M}_{n}(\Z_p) := \left \{ \begin{pmatrix}
A_1 & -1 & O\\ 
A_2 & A_3 & A_4\\ 
A_5 & A_6 & A_7
\end{pmatrix} \in \M_{1+1+(n-2)}(\Z_p) \right \} \subset \M_n(\Z_p)
$$
and
$$
\widetilde{P}_{n, k} := \PP \begin{pmatrix}
\cok(M_{A, \, [B_1, \cdots, B_k]}(y_i)) \cong H_i \\ 
\text{ for } 1 \le i \le m
\end{pmatrix}
$$
where $A \in \widetilde{\M}_{n}(\Z_p)$, $B_1, \cdots, B_k \in \M_{n \times 1}(\Z_p)$ are random and independent. 
Let
$$
A = \begin{pmatrix}
A_1 & -1 & O\\ 
A_2 & A_3 & A_4\\ 
A_5 & A_6 & A_7
\end{pmatrix} \in \widetilde{\M}_{n}(\Z_p), \; B_j = \begin{pmatrix} c_j \\ d_j \\ e_j \end{pmatrix} \in \M_{(1+1+(n-2)) \times 1}(\Z_p) \;\; (1 \le j \le k).
$$
By elementary operations, we have
\begin{equation*}
\small
\begin{split}
M_{A, \, [B_1, \cdots, B_k]}(y) = & \, \left(\begin{array}{@{}c|c|c@{}}
A_1+\sum_{j=1}^{k} y^jc_j+y^{k+1} & -1 & O \\ \hline
A_2+\sum_{j=1}^{k} y^jd_j & A_3+y & A_4 \\ \hline
A_5+\sum_{j=1}^{k} y^je_j & A_6 & A_7+yI_{n-2}
\end{array}\right) \\
   \Rightarrow & \, \left(\begin{array}{@{}c|c|c@{}}
0 & -1 & O \\ \hline
(A_2+\sum_{j=1}^{k} y^jd_j) + (A_3+y)(A_1+\sum_{j=1}^{k} y^jc_j+y^{k+1}) & A_3+y & A_4 \\ \hline
(A_5+\sum_{j=1}^{k} y^je_j) + A_6(A_1+\sum_{j=1}^{k} y^jc_j+y^{k+1}) & A_6 & A_7+yI_{n-2} \end{array}\right) \\
   \Rightarrow & \, \left(\begin{array}{@{}c|c@{}}
A_2+A_3A_1 & A_4 \\ \hline
A_5+A_6A_1 & A_7 \end{array}\right) + y \begin{pmatrix}
0 & \\ 
 & I_{n-2}
\end{pmatrix}
+ \sum_{j=1}^{k} y^j \left(\begin{array}{@{}c|c@{}}
d_j+A_3c_j+c_{j-1} & O \\ \hline
e_j+A_6c_j & O \end{array}\right) \\
& \, + y^{k+1} \left(\begin{array}{@{}c|c@{}}
A_3+c_k & O \\ \hline
A_6 & O \end{array}\right) + y^{k+2} \begin{pmatrix}
1 & \\ 
 & O_{n-2}
\end{pmatrix} \;\; (c_0 := A_1) \\
   =: & \, M_{A', \, [B_1', \cdots, B_{k+1}']}(y).
\end{split}
\end{equation*}

\noindent Since the elementary operations do not change the cokernel, we have
$$
\cok(M_{A, \, [B_1, \cdots, B_k]}(y_i)) \cong \cok(M_{A', \, [B_1', \cdots, B_{k+1}']}(y_i))
$$
for each $i$. 
The matrices $B_1', \cdots, B_{k}'$ are given by $B_j' = \begin{pmatrix}
d_j \\ e_j
\end{pmatrix} + N_j$ ($1 \le j \le k$) where $N_1, \cdots, N_{k} \in \M_{(n-1) \times 1}(\Z_p)$ depending only on $A_1$, $A_3$, $A_6$ and $c_j$ ($1 \le j \le k$). Similarly, $\displaystyle A' = \begin{pmatrix}
A_2 & A_4 \\
A_5 & A_7 \\
\end{pmatrix} + N$ for some $N \in \M_{n-1}(\Z_p)$ depending only on $A_1$, $A_3$ and $A_6$ and $B_{k+1}' = \begin{pmatrix}
A_3+c_k & O \\
A_6 & O
\end{pmatrix}$. Therefore $A', B_1', \cdots, B_{k+1}'$ are random and independent if $d_j$, $e_j$ ($1 \le j \le k$), $A_l$ ($2 \le l \le 7$) are random and independent, or $A, B_1, \cdots, B_k$ are random and independent. This implies that
\begin{equation} \label{eq5b}
\widetilde{P}_{n, k} = P_{n-1, k+1}.
\end{equation}

Choose $M > 0$ such that $p^M H_i = 0$ for every $i$. By the equations (\ref{eq5a}) and (\ref{eq5b}), we have
\begin{equation} \label{eq5c}
\begin{split}
\left | P_{n,0} - P_{n-d, d} \right |
\le \sum_{i=1}^{d} \left | P_{n-i+1, i-1} - P_{n-i, i} \right | 
\le \sum_{i=1}^{d} (1 - c_{n-i-1, 1}) 
< \frac{2}{p^{n-d-1}}
\end{split}
\end{equation}
for $d \ge M$ and $n>d$. For every $n > M+1$, we have $P_{n-M, M} = P_{n-M, M+1}$ and
\begin{equation*}
\left | P_{n,0} - P_{n+1, 0} \right | 
\le \left | P_{n,0} - P_{n-M, M} \right | + \left | P_{n+1,0} - P_{n-M, M+1} \right | 
< \frac{4}{p^{n-M-1}}
\end{equation*}
by the equation (\ref{eq5c}). This finishes the proof.
\end{proof}

\subsection{Mixed moments} \label{Sub42}

Now we compute the mixed moments of the cokernels $\cok(A_n + px_iI_n)$ ($1 \le i \le m$) where each random matrix $A_n \in \M_n(\Z_p)$ ($n \ge 1$) is given as in Theorem \ref{thm1e}. 
Nguyen and Van Peski \cite{NVP24} and the second author \cite{Lee24} independently defined mixed moments of multiple random groups and extended the universality results of Wood \cite[Theorem 1.3]{Woo19} to the joint distribution of various multiple cokernels. 
The \textit{mixed moments} of (not necessarily independent) random finite groups $Y_1, \cdots, Y_r$ are defined to be the expected values
$$
\EE \left ( \prod_{k=1}^{r} \# \Sur(Y_k, G_k) \right ) 
$$
for finite groups $G_1, \cdots, G_r$.

For a partition $\lambda = (\lambda_1 \ge \cdots \ge \lambda_r)$, let $\lambda'$ be the conjugate of $\lambda$, $G_{\lambda} := \prod_{i=1}^{r} \Z / p^{\lambda_i} \Z$ be the finite abelian $p$-group of type $\lambda$ and denote $m(G_{\lambda}) :=  p^{\sum_{i} \frac{(\lambda'_i)^2}{2}}$. The following theorem is a special case of \cite[Theorem 1.3]{Lee24} (taking $P=\left \{ p \right \}$), which extends \cite[Theorem 2.5]{Woo22} to the multiple random groups. We note that Nguyen and Van Peski \cite[Theorem 9.1]{NVP24} independently obtained a similar result.

\begin{theorem} \label{thm6a}
(\cite[Theorem 1.3]{Lee24}) Let $Y = (Y^{(1)}, \cdots, Y^{(r)})$, $Y_n = (Y_{n}^{(1)}, \cdots, Y_{n}^{(r)})$ ($n \ge 1$) be random $r$-tuples of elements in $\GG_p$. Suppose that for every $G^{(1)}, \cdots, G^{(r)} \in \GG_p$, we have
\begin{equation*}
\lim_{n \rightarrow \infty} \EE(\prod_{k=1}^{r} \# \Sur(Y_{n}^{(k)}, G^{(k)}))
= \EE(\prod_{k=1}^{r} \# \Sur(Y^{(k)}, G^{(k)}))
= O \left ( \prod_{k=1}^{r} m(G^{(k)}) \right ).
\end{equation*}
Then for every $H^{(1)}, \cdots, H^{(r)} \in \GG_p$, we have
$$
\lim_{n \rightarrow \infty} \PP (Y_{n}^{(k)} \cong H^{(k)} \text{ for } 1 \le k \le r)
= \PP (Y^{(k)} \cong H^{(k)} \text{ for } 1 \le k \le r).
$$    
\end{theorem}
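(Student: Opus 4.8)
The plan is to deduce this from the multivariate ``moments determine the distribution'' principle of Wood, which is precisely the content of \cite[Theorem 1.3]{Lee24} (itself a refinement of \cite[Theorem 2.5]{Woo22}); so the cleanest route is to cite it, but the underlying argument runs as follows. The strategy is to write the joint probability $\PP(Y^{(k)}\cong H^{(k)}$ for $1\le k\le r)$ as an absolutely convergent series in the mixed moments, and to show that this series converges uniformly over the family $\{Y_n\}_{n\ge1}$ once $n$ is large, so that termwise convergence of moments forces convergence of probabilities.

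First I would set up the inversion formula. For a single random group $Z\in\GG_p$, the identity $\#\Hom(A,B)=\sum_{C\le B}\#\Sur(A,C)$ together with M\"obius inversion over the poset of finite abelian $p$-groups produces explicit rational coefficients $b_H(G)$ (for $H,G\in\GG_p$) with $\mathbf{1}(Z\cong H)=\sum_{G}b_H(G)\,\#\Sur(Z,G)$ as a formal identity. Taking expectations and treating each coordinate separately --- here one uses that $\#\Sur(\cdot,\cdot)$ factors over the $r$ entries and that the $b_H(G)$ are fixed numbers --- gives
$$
\PP\bigl(Y^{(k)}\cong H^{(k)}\text{ for }1\le k\le r\bigr)=\sum_{G^{(1)},\dots,G^{(r)}\in\GG_p}\Bigl(\prod_{k=1}^{r}b_{H^{(k)}}(G^{(k)})\Bigr)\,\EE\Bigl(\prod_{k=1}^{r}\#\Sur(Y^{(k)},G^{(k)})\Bigr)
$$
as a formal identity.

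Next I would establish absolute and uniform convergence. Invoking Wood's estimate that $\sum_{G\in\GG_p}|b_H(G)|\,m(G)<\infty$ for each fixed $H$, the hypothesis $\EE(\prod_k\#\Sur(Y^{(k)},G^{(k)}))=O(\prod_k m(G^{(k)}))$ bounds the general term of the series by a product that sums over coordinates to $\prod_{k=1}^{r}\bigl(\sum_{G^{(k)}}|b_{H^{(k)}}(G^{(k)})|\,m(G^{(k)})\bigr)<\infty$. The same domination applies to every $Y_n$ with one fixed implied constant, because the mixed moments of $Y_n$ converge and hence are eventually uniformly bounded; so the tail of the series over all tuples with $\max_k|G^{(k)}|>p^N$ is at most some $\varepsilon(N)\to0$ not depending on $n$. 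To finish, fix $\varepsilon>0$, pick $N$ with $\varepsilon(N)<\varepsilon$, and split both $\PP(Y_n^{(k)}\cong H^{(k)}$ for all $k)$ and the corresponding probability for $Y$ into the finite partial sum over $|G^{(k)}|\le p^N$ plus a tail of size $<\varepsilon$; the finite partial sums involve only finitely many mixed moments, each converging to its counterpart for $Y$, so $\limsup_n$ of the difference of the two probabilities is at most $2\varepsilon$, and letting $\varepsilon\to0$ gives the claim.

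The hard part will be the estimate $\sum_{G}|b_H(G)|\,m(G)<\infty$: that the M\"obius-inversion coefficients decay fast enough to absorb the growth rate $m(G_\lambda)=p^{\sum_i(\lambda'_i)^2/2}$ permitted by the moment bound. This is exactly where the threshold in the hypothesis is calibrated, and it is the technical heart of Wood's original single-variable argument and its sharpening in \cite{Woo22}; once it is available, the multivariate passage above is purely formal, since everything factors over the $r$ coordinates. For that reason the efficient thing to do in our setting is simply to quote \cite[Theorem 1.3]{Lee24}, which packages precisely this estimate together with the required uniformity.
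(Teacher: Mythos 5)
The paper itself does not prove this statement; it simply quotes it from \cite[Theorem 1.3]{Lee24}, and your decision to cite that reference matches the paper exactly. Since you also supply a sketch, let me flag one genuine gap in it so you do not mistake it for a complete argument: the step where you assert that ``the mixed moments of $Y_n$ converge and hence are eventually uniformly bounded,'' so that the tail $\sum_{\max_k|G^{(k)}|>p^N}$ is controlled by a constant independent of $n$, does not follow from the hypotheses. Pointwise convergence $\EE(\prod_k\#\Sur(Y_n^{(k)},G^{(k)}))\to\EE(\prod_k\#\Sur(Y^{(k)},G^{(k)}))$ for each fixed tuple, together with the limiting bound $O(\prod_k m(G^{(k)}))$, gives a bound $\le 2C\prod_k m(G^{(k)})$ only for $n\ge n_0(G^{(1)},\dots,G^{(r)})$, and the threshold $n_0$ can grow with the tuple; one can construct sequences of random groups whose moments converge to limits that are $O(m(G))$ while $\EE(\#\Sur(Y_n,G))/m(G)$ is unbounded along a diagonal $n\sim|G|$. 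So the dominated-convergence-with-a-uniform-dominating-function argument you describe does not close on its own. The actual proof in \cite{Lee24} (following \cite{Woo22}) avoids this by a more delicate truncation scheme that does not require a uniform-in-$n$ moment bound; this is exactly the technical content that makes the theorem nontrivial, and is the reason that simply citing \cite[Theorem 1.3]{Lee24}, as both you and the paper do, is the right move.
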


\begin{example} \label{ex6b}
(\cite[Section 2.2]{Woo22}) Let $Y_{\text{odd}}$ and $Y_{\text{even}}$ be random elements of $\GG_p$ given as in \cite[(2.7)]{Woo22}. (We consider them as random finite abelian $p$-groups which are always elementary abelian $p$-groups.) Let $Y_1^{(1)}, \cdots, Y_1^{(r)}$ (resp. $Y_2^{(1)}, \cdots, Y_2^{(r)}$) be i.i.d. random variables in $\GG_p$ following the distribution of $Y_{\text{odd}}$ (resp. $Y_{\text{even}}$). Then we have
$$
\EE(\prod_{k=1}^{r} \# \Sur(Y_1^{(k)}, (\Z/p\Z)^t))
= \EE(\prod_{k=1}^{r} \# \Sur(Y_2^{(k)}, (\Z/p\Z)^t))
= p^{\frac{r(t^2+t)}{2}}
$$
by \cite[Theorem 2.8]{Woo22}. 
This example shows that Theorem \ref{thm6a} can fail even if the mixed moments are slightly larger than the upper bound, which is given by $O(\prod_{k=1}^{r} m((\Z/p\Z)^t)) = O(p^{\frac{rt^2}{2}})$ here.
\end{example}

Let $P_1, \cdots, P_m \in \Z_p[t]$ be monic polynomials whose reductions modulo $p$ are irreducible and $A_n \in \M_n(\Z_p)$ be a random matrix for each $n \ge 1$. 
Assume that one can determine the (limiting) joint distribution of the cokernels $\cok(P_i(A_n))$ ($1 \le i \le m$) when each $A_n$ is equidistributed. Then the next goal would be to prove universality of the joint distribution of the cokernels for general $\varepsilon_n$-balanced matrices $A_n$. 
The only known way to prove such universality is to compute the mixed moments of the cokernels. Recall that $X_m = \left \{ x_1, \cdots, x_m \right \}$ is a finite ordered subset of $\Z_p$ whose elements have distinct reductions modulo $p$.

\begin{theorem} \label{thm6c}
Let $(\varepsilon_n)_{n \ge 1}$ be a sequence of real numbers such that for every $\Delta > 0$, we have $\varepsilon_n \ge \frac{\Delta \log n}{n}$ for sufficiently large $n$. Let $A_n \in \M_n(\Z_p)$ be an $\varepsilon_n$-balanced random matrix for each $n \ge 1$, $G_1, \cdots, G_m \in \GG_p$ and $p_k : \prod_{i=1}^{m} G_i \rightarrow G_k$ ($1 \le k \le m$) be the natural projections. Then we have
\begin{equation} \label{eq6a}
\lim_{n \rightarrow \infty} \EE \left ( \prod_{i=1}^{m} \# \Sur(\cok(A_n+px_iI_n), G_i) \right ) = \left | S_{G_1, \cdots, G_m}(X_m) \right |
\end{equation}
where $T_x \in \End(\prod_{i=1}^{m} G_i)$ ($(g_1, \cdots, g_m) \mapsto (x_1g_1, \cdots, x_mg_m)$) and 
$$
S_{G_1, \cdots, G_m}(X_m) := \left \{ G \le \prod_{i=1}^{m} G_i : p_i(G)=G_i \text{ for each } i \text{ and } p T_{x}(G) \le G \right \}.
$$
\end{theorem}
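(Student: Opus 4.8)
The plan is to follow the standard moment-computation template that appears in the work of Wood, Nguyen–Wood, and Cheong–Yu, adapted to the shifted matrices $A_n + px_iI_n$. The starting point is the identity
$$
\EE \left ( \prod_{i=1}^{m} \# \Sur(\cok(A_n+px_iI_n), G_i) \right )
= \sum_{(F_1, \cdots, F_m)} \PP \bigl( (A_n + px_iI_n) F_i = 0 \text{ in } G_i \text{ for each } i \bigr),
$$
where the sum is over tuples of surjective homomorphisms $F_i \colon \Z_p^n \twoheadrightarrow G_i$, viewed as elements of $G_i^n$. Writing $F = (F_1, \cdots, F_m) \colon \Z_p^n \to \prod_i G_i$, the $m$ conditions $(A_n + px_iI_n)F_i = 0$ can be packaged: if $v_1, \cdots, v_n \in \prod_i G_i$ are the columns of $F$ (so $F$ is surjective iff the $v_j$ generate $\prod_i G_i$), then the condition involving the $j$-th column of $A_n$ reads $\sum_k (A_n)_{kj} v_k + p T_x v_j = 0$ in $\prod_i G_i$, i.e. $p T_x v_j \in \langle v_1, \cdots, v_n \rangle$ automatically, and the genuine constraint is on the random entries. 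First I would reduce, exactly as in Nguyen–Wood, to counting: the probability factors (approximately, up to the error terms controlled by $\varepsilon_n$-balancedness) as a product over columns, and by the standard "code"/orthogonality argument the dominant contribution comes from tuples $F$ whose image subgroup $G := F(\Z_p^n) \le \prod_i G_i$ satisfies $p_i(G) = G_i$ for each $i$ (surjectivity onto each factor) and is stable under $pT_x$, i.e. $pT_x(G) \le G$. This is precisely the set $S_{G_1, \cdots, G_m}(X_m)$.

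The key steps, in order: (1) Expand the mixed moment as a sum over tuples of surjections and rewrite each event $(A_n+px_iI_n)F_i=0$ column-by-column; crucially $px_iI_n$ contributes the term $pT_x v_j$ to the $j$-th equation, so the equations are $A_n$-linear relations among the columns $v_k$ together with a fixed "drift" $pT_x v_j$. (2) For a fixed subgroup $G \le \prod_i G_i$, estimate $\PP(A_n F = 0)$ over all $F$ with $F(\Z_p^n) = G$: by $\varepsilon_n$-balancedness and the hypothesis $\varepsilon_n \ge \Delta \log n / n$, the argument of Nguyen–Wood (their Lemma/Proposition on "well-distributed" $F$, building on Wood's moment bounds) shows this probability is $|G|^{-n}(1+o(1))$ provided $pT_x v_j \in G$ for all $j$ — which holds automatically once $pT_x(G) \le G$ — and is negligible otherwise; the number of such $F$ is $|G|^n / |\Aut \text{-orbit}|$-type count that, summed, yields $1$ for each admissible $G$ and contributes $o(1)$ for $G$ with $p_i(G) \ne G_i$ (non-surjective) or $pT_x(G) \not\le G$. (3) Sum over $G$: the count of admissible $G$ is exactly $|S_{G_1,\cdots,G_m}(X_m)|$, giving the right-hand side; (4) verify the error terms are $o(1)$ as $n \to \infty$ using the decay hypothesis on $\varepsilon_n$ and a uniform bound $\left| S_{G_1,\cdots,G_m}(X_m)\right| = O(\prod_i m(G_i))$ so the moments stay within the range where the preceding machinery (Theorem \ref{thm6a}) applies.

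The main obstacle I expect is step (2) — showing that the shifted condition does not disturb the probability estimate. In the unshifted case one uses that $A_n F = 0$ is a homogeneous linear condition whose solution probability over $\varepsilon_n$-balanced $A_n$ is controlled by how "spread out" $F$ is (the depth/code of $F$, in Wood's terminology); here the extra term $pT_x$ makes the $j$-th column equation inhomogeneous, $\sum_k (A_n)_{kj} v_k = -pT_x v_j$, so one must check that the conditional probability of hitting the prescribed value $-pT_x v_j \in \prod_i G_i$ is still $|G|^{-1}(1+o(1))$ for well-distributed $F$ — this is where $pT_x(G) \le G$ is used, to ensure $-pT_x v_j$ actually lies in $G$ and the inhomogeneous equation is solvable at the same rate as the homogeneous one. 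Once this is in place, the rest is bookkeeping parallel to \cite[Theorem 4.1]{NW22a} and \cite[Corollary 1.8]{CY23}. A secondary technical point is to confirm that the set $S_{G_1,\cdots,G_m}(X_m)$ is the correct indexing set — i.e. that $pT_x(G)\le G$ (not a stronger or weaker stability condition) is exactly what survives — which follows by tracking that only $p$ times the shift appears and that $p_i(G)=G_i$ is forced by surjectivity of each $F_i$.
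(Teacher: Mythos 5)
Your approach is essentially the one in the paper: expand the mixed moment over tuples $(F_1,\dots,F_m)$ of surjections, package them as a single map $F=(F_1,\dots,F_m)$ so that the event becomes $FA_n' = -pT_xF$ (with $A_n'$ the reduction of $A_n$ mod $p^k$ for $k$ killing all $G_i$), observe that a nonzero probability forces $\im(F)\in S_{G_1,\dots,G_m}(X_m)$, and then invoke the Nguyen--Wood moment machinery to show $\sum_{F:\,\im F = G}\PP(FA_n'=-pT_xF)\to 1$ with polynomial error for each admissible $G$, before summing over $G$.

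The one point you flag as the ``main obstacle'' --- that the column equations are inhomogeneous, $\sum_k (A_n)_{kj}v_k = -pT_x v_j$, so one must show the probability of hitting this prescribed target is the same as in the homogeneous case --- is a genuine gap in your write-up, but it has a short resolution that you should make explicit rather than defer. Since $pT_x(G)\le G=\im(F)$, there exists some fixed $X_0\in\M_n(R)$ with $FX_0 = -pT_xF$; then $\PP(FA_n'=-pT_xF)=\PP(F(A_n'-X_0)=0)$, and $A_n'-X_0$ is again an $\varepsilon_n$-balanced matrix because translating each entry by a constant preserves the $\varepsilon_n$-balanced condition. This reduces the inhomogeneous estimate back to the homogeneous one of Nguyen--Wood (the paper cites the proof of \cite[Theorem 4.12]{NW22a} and the corresponding upper bound in \cite[Lemma 4.11]{NW22a}), so no new probabilistic analysis is needed. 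Two minor corrections: the count over surjections $F$ with image $G$ is (approximately) $|G|^n$ many maps, each contributing $\approx|G|^{-n}$, so the sum is $\approx 1$; no $\Aut$-quotient enters at this stage. And your step~(4) invoking the bound $|S_{G_1,\dots,G_m}(X_m)|=O(\prod_i m(G_i))$ is not needed to \emph{compute} the moments --- it would matter only for recovering the distribution from the moments via Theorem~\ref{thm6a}, which Example~\ref{ex6d} shows fails here.
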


\begin{proof}
Choose $k \in \Z_{\ge 1}$ such that $p^kG_i = 0$ for all $i$. Let $R = \Z/p^k\Z$, $A_n' \in \M_{n}(R)$ be the reduction of $A_n$ modulo $p^k$ (which is also $\varepsilon_n$-balanced) and $v_j = A_n' e_j \in R^n$ where $\left\{ e_1, \cdots, e_{n} \right\}$ is the standard basis of $R^n$. Then we have
\begin{equation} \label{eq6b}
\begin{split}
& \EE \left ( \prod_{i=1}^{m} \# \Sur(\cok(A_n+px_iI_n), G_i) \right ) \\
= \, & \sum_{\substack{F_i \in \Sur(R^n, G_i) \\ 1 \le i \le m}} \PP(F_i(v_j+px_ie_j)=0 \text{ for all } 1 \le j \le n) \\
= \, & \sum_{\substack{F_i \in \Sur(R^n, G_i) \\ 1 \le i \le m}} \PP(Fv_j = -pT_{x}(Fe_j) \text{ for all } 1 \le j \le n ) \\
= \, & \sum_{\substack{F_i \in \Sur(R^n, G_i) \\ 1 \le i \le m}} \PP(FA_n' = -pT_{x}F).
\end{split}
\end{equation}
If the probability $\PP(FA_n' = -pT_{x}F)$ is non-zero, then $G = \im(F)$ is an element of $S_{G_1, \cdots, G_m}(X_m)$. Following the proof of \cite[Theorem 4.12]{NW22a}, one can prove that there are constants $c, K>0$ (depend only on $G$ and $X_m$) such that
\begin{equation} \label{eq6c}
\left| \sum_{F \in \Sur_R(R^n, G)} \PP(FA_n' = -pT_{x}F) - 1 \right| \le K n^{-c}
\end{equation}
for every $n \ge 1$ and $G \in S_{G_1, \cdots, G_m}(X_m)$. (To do this, we need to generalize \cite[Lemma 4.11]{NW22a} to an upper bound of $\PP(FX=A)$ for every $A \in \im (F)$. For any $X_0$ such that $FX_0=A$, we have $\PP(FX=A)=\PP(F(X-X_0)=0)$ and $X-X_0$ is also an $\varepsilon_n$-balanced matrix so this immediately follows from the case $A=0$.)

Now the equations (\ref{eq6b}) and (\ref{eq6c}) imply that
\begin{equation*}
\begin{split}
\lim_{n \rightarrow \infty} \EE \left ( \prod_{i=1}^{m} \# \Sur(\cok(A_n+px_iI_n), G_i) \right )
& = \lim_{n \rightarrow \infty} \sum_{\substack{F_i \in \Sur(R^n, G_i) \\ 1 \le i \le m}} \PP(FA_n' = -pT_{x}F) \\
& = \lim_{n \rightarrow \infty} \sum_{G \in S_{G_1, \cdots, G_m}(X_m)} \sum_{F \in \Sur(R^n, G)} \PP(FA_n' = -pT_{x}F) \\
& = \left | S_{G_1, \cdots, G_m}(X_m) \right |. \qedhere
\end{split}   
\end{equation*}
\end{proof}

\begin{example} \label{ex6d}
Let $p \ge m \ge 3$, $G_1=\cdots =G_m=(\Z/p\Z)^t$ and $\left \{ e_1, \cdots, e_t \right \}$ be the standard basis of $(\Z/p\Z)^t$. Then we have

\begin{equation*}
\begin{split}
\left | S_{G_1, \cdots, G_m}(X_m) \right | & := \# \left \{ G \le \prod_{i=1}^{m}G_i : p_i(G)=G_i \text{ for each } i \right \} \\
& \ge \# \begin{Bmatrix}
G = \left \langle (e_j, u_{2,j}, \cdots, u_{m,j} ) : 1 \le j \le t \right \rangle : \\
\left \langle u_{i, 1}, \cdots, u_{i, t} \right \rangle
= (\Z/p\Z)^t \text{ for every } 2 \le i \le m
\end{Bmatrix} \\
& = \left ( \prod_{k=0}^{t-1} (p^t-p^k) \right )^{m-1} \\
& > c_{\infty}(p)p^{(m-1)t^2}.
\end{split}    
\end{equation*}
To apply Theorem \ref{thm6a}, the mixed moments of the cokernels for $G_1, \cdots, G_m$ should be 
$$
O \left ( \prod_{i=1}^{m} m(G_i) \right ) = O((p^{\frac{t^2}{2}})^m) = O(p^{\frac{mt^2}{2}}).
$$
However, the above inequality implies that for every constant $C>0$, we have 
$$
\left | S_{G_1, \cdots, G_m}(X_m) \right | > c_{\infty}(p)p^{(m-1)t^2} > Cp^{\frac{mt^2}{2}} 
$$
for sufficiently large $t$. Therefore we cannot apply Theorem \ref{thm6a} in this case. In fact, Example \ref{ex6b} tells us that there are two different $m$-tuples of random elements in $\GG_p$ whose mixed moments for $G_1= \cdots = G_m = (\Z/p\Z)^t$ are $p^{\frac{m(t^2+t)}{2}}$, which is smaller than $c_{\infty}(p)p^{(m-1)t^2}$ for every $t \ge 4$ by the inequality $c_{\infty}(p) > \frac{1}{4}$.
\end{example}

By the above example, we cannot determine the joint distribution of the cokernels $\cok(A_n+px_iI_n)$ ($1 \le i \le m$) for $m \ge 3$ using existing methods. 
As we mentioned in the introduction, we believe that one needs to combine combinatorial relations between the cokernels (Theorem \ref{thm1h} and Conjecture \ref{conj1i}) and the mixed moments of the cokernels (Theorem \ref{thm6c}) to solve this problem.

\section*{Acknowledgments}

J. Jung was partially supported by Samsung Science and Technology Foundation (SSTF-BA2001-04). J. Lee was supported by the new faculty research fund of Ajou University (S-2023-G0001-00236). The authors thank Gilyoung Cheong and Seongsu Jeon for their helpful comments.

{\small  }

\begin{thebibliography}{99}

\bibitem{CH21}
G. Cheong and Y. Huang, Cohen-Lenstra distributions via random matrices over complete discrete valuation rings with finite residue fields, Illinois J. Math. 65 (2021), 385--415.

\bibitem{CK22}
G. Cheong and N. Kaplan, Generalizations of results of Friedman and Washington on cokernels of random $p$-adic matrices, J. Algebra 604 (2022), 636--663.

\bibitem{CY23}
G. Cheong and M. Yu, The distribution of the cokernel of a polynomial evaluated at a random integral matrix, arXiv:2303.09125.

\bibitem{CL84}
H. Cohen and H. W. Lenstra Jr., Heuristics on class groups of number fields, Number Theory, Noordwijkerhout 1983, Lecture Notes in Math. 1068, Springer, Berlin, 1984, 33--62.

\bibitem{FW89}
E. Friedman and L. C. Washington, On the distribution of divisor class groups of curves over a finite field, in Théorie des Nombres (Quebec, PQ, 1987), de Gruyter, Berlin, 1989, 227--239.

\bibitem{Lee23a}
J. Lee, Joint distribution of the cokernels of random $p$-adic matrices, Forum Math. 35 (2023), no. 4, 1005--1020.

\bibitem{Lee23b}
J. Lee, Universality of the cokernels of random $p$-adic Hermitian matrices, Trans. Amer. Math. Soc. 376 (2023), no. 12, 8699--8732.

\bibitem{Lee24}
J. Lee, Mixed moments and the joint distribution of random groups, J. Algebra 641 (2024), 49--84.

\bibitem{NVP24}
H. H. Nguyen and R. Van Peski, Universality for cokernels of random matrix products, Adv. Math. 438 (2024), 109451.

\bibitem{NW22a}
H. H. Nguyen and M. M. Wood, Random integral matrices: universality of surjectivity and the cokernel, Invent. Math. 228 (2022), 1--76.

\bibitem{NW22b}
H. H. Nguyen and M. M. Wood, Local and global universality of random matrix cokernels, arXiv:2210.08526.

\bibitem{VP23}
R. Van Peski, Hall-Littlewood polynomials, boundaries, and $p$-adic random matrices, Int. Math. Res. Not. (2023), no. 13, 11217--11275.

\bibitem{Woo17}
M. M. Wood, The distribution of sandpile groups of random graphs, J. Amer. Math. Soc. 30 (2017), no. 4, 915--958.

\bibitem{Woo19}
M. M. Wood, Random integral matrices and the Cohen-Lenstra heuristics, Amer. J. Math. 141 (2019), no. 2, 383--398.

\bibitem{Woo22}
M. M. Wood, Probability theory for random groups arising in number theory, arXiv:2301.09687, to appear in Proceedings of the International Congress of Mathematicians (2022).

\bibitem{Yan23}
E. Yan, Universality for cokernels of Dedekind domain valued random matrices, arXiv:2301.09196.
\end{thebibliography}
\end{document}